\documentclass[12pt]{amsart}









\usepackage[utf8]{inputenc}						
\usepackage{amsmath,amsfonts,amssymb,amsthm}	

\usepackage{geometry}		
\usepackage{mathtools}		
\usepackage{hyperref}		
\usepackage{setspace}		
\usepackage{cite}			
\usepackage{color}			



\geometry{textwidth=20.3cm-4.5cm,hratio=1:1,tmargin=2.75cm,bmargin=3cm}		
\setstretch{1.2}	


\definecolor{midpurple}{rgb}{0.6,0.2,0.4}
\hypersetup{colorlinks=true,citecolor=blue,linkcolor=blue,urlcolor=midpurple}


\numberwithin{equation}{section}			
\setcounter{tocdepth}{1}					


\newtheorem{theorem}{Theorem}[section]			
\newtheorem{proposition}[theorem]{Proposition}	
\newtheorem{lemma}[theorem]{Lemma}
\newtheorem{corollary}[theorem]{Corollary}

\newtheorem{remark}[theorem]{Remark}
\newtheorem{definition}[theorem]{Definition}

\newtheorem*{theorem*}{Theorem}
\newtheorem*{proposition*}{Proposition}
\newtheorem*{lemma*}{Lemma}
\newtheorem*{corollary*}{Corollary}

\newtheorem*{conjecture*}{Conjecture}
\newtheorem*{question*}{Question}
\newtheorem*{remark*}{Remark}
\newtheorem*{definition*}{Definition}


\renewcommand{\leq}{\leqslant}	
\renewcommand{\geq}{\geqslant}	


\newcommand{\N}{\mathbb{N}}		
\newcommand{\Z}{\mathbb{Z}}		
\newcommand{\Q}{\mathbb{Q}}		
\newcommand{\R}{\mathbb{R}}		
\newcommand{\C}{\mathbb{C}}		
\newcommand{\T}{\mathbb{T}}		


\newcommand{\eps}{\varepsilon}			


\newcommand{\wt}{\widetilde}		
\newcommand{\wh}{\widehat}			


\newcommand{\E}{\mathbb{E}}			

   

\newcommand{\transp}{\mathsf{T}}						






\newcommand{\dx}{\mathrm{d}x}
\newcommand{\dm}{\mathrm{d}m}

\newcommand{\dbeta}{\mathrm{d}\beta}
\newcommand{\deta}{\mathrm{d}\eta}

\newcommand{\dsigma}{\mathrm{d}\sigma}
\newcommand{\dtheta}{\mathrm{d}\theta}
\newcommand{\dxi}{\mathrm{d}\xi}
\newcommand{\dSigma}{\mathrm{d}\Sigma}

\newcommand{\bfa}{\mathbf{a}}
\newcommand{\bfb}{\mathbf{b}}

\newcommand{\bfe}{\mathbf{e}}

\newcommand{\bfj}{\mathbf{j}}
\newcommand{\bfk}{\mathbf{k}}

\newcommand{\bfm}{\mathbf{m}}
\newcommand{\bfn}{\mathbf{n}}

\newcommand{\bfu}{\mathbf{u}}
\newcommand{\bfv}{\mathbf{v}}
\newcommand{\bfw}{\mathbf{w}}
\newcommand{\bfx}{\mathbf{x}}
\newcommand{\bfy}{\mathbf{y}}
\newcommand{\bfz}{\mathbf{z}}

\newcommand{\bfM}{\mathbf{M}}

\newcommand{\bfP}{\mathbf{P}}

\newcommand{\bfalpha}{\boldsymbol{\alpha}}	
\newcommand{\bfbeta}{\boldsymbol{\beta}}	
	
\newcommand{\bflambda}{\boldsymbol{\lambda}}	
\newcommand{\bfmu}{\boldsymbol{\mu}}
\newcommand{\bftheta}{\boldsymbol{\theta}}	

\newcommand{\bfxi}{\boldsymbol{\xi}}

\newcommand{\dbfx}{\mathrm{d}\mathbf{x}}
\newcommand{\dbfalpha}{\mathrm{d}\boldsymbol{\alpha}}
\newcommand{\dbfbeta}{\mathrm{d}\boldsymbol{\beta}}	

\newcommand{\dbfxi}{\mathrm{d}\boldsymbol{\xi}}

\newcommand{\frakM}{\mathfrak{M}}		
\newcommand{\frakm}{\mathfrak{m}}		
\newcommand{\frakU}{\mathfrak{U}}		
\newcommand{\frakS}{\mathfrak{S}}		
\newcommand{\frakJ}{\mathfrak{J}}		
\newcommand{\calB}{\mathcal{B}}			
\newcommand{\calN}{\mathcal{N}}			
\newcommand{\calP}{\mathcal{P}}			

\newcommand{\ZM}{\Z_{\overline{\mathbf{M}}}}										
\DeclareMathOperator{\diam}{diam}													
\newcommand{\EB}[1]{\mathbb{E}\big[ #1 | \mathcal{B} \big]}							

\begin{document}

\title{Additive equations in dense variables
via truncated restriction estimates}

\author{Kevin Henriot}

\date{}
\begin{abstract}
We study additive equations of the form
$\sum_{i=1}^s \lambda_i \bfP(\bfn_i) = 0$
in variables $\bfn_i \in \Z^d$, where the $\lambda_i$ 
are nonzero integers summing up to zero and $\bfP = (P_1,\dots,P_r)$ 
is a system of homogeneous polynomials
making the equation is translation-invariant.
We investigate the solvability of this equation in subsets
of density $(\log N)^{-c(\bfP,\bflambda)}$ of a large box $[N]^d$,
via the energy increment method.
We obtain positive results for roughly the number of variables
currently needed to derive a count of solutions in the complete box $[N]^d$,
for the multidimensional systems of large degree 
studied by Parsell, Prendiville and Wooley.
Appealing to estimates from the decoupling theory
of Bourgain, Demeter and Guth, we also treat the cases
of the monomial curve $\bfP = (x,\dots,x^k)$
and the parabola $\bfP=(\bfx,|\bfx|^2)$,
for a number of variables close to or equal
to the limit of the circle method.
\end{abstract}

\maketitle


\section{Introduction}
\label{sec:intro}

We are interested in solving
additive diophantine equations in
variables belonging to a thin subset of
a box $[N]^d$, for a large integer $N \geq 2$.
More precisely, we consider a system of $r$ homogeneous integer polynomials 
$\bfP = (P_1,\dots,P_r)$ in $d$ variables,
with each $P_i$ of degree $k_i \geq 1$.
Borrowing terminology from Parsell et al.~\cite{PPW:Multidim},
we call $d = d(\bfP)$ the dimension of the system $\bfP$
when each variable $x_i$, $1 \leq i \leq d$ appears in 
a monomial with nonzero coefficient in 
at least one of the polynomials $P_1,\dots,P_r$.
We define the degree of $\bfP$ as $k = k(\bfP) = \max_i k_i$,
and its weight as $K = K(\bfP) = \sum_i k_i$.
Furthermore, we say that the system is reduced when the polynomials
$P_i$ are linearly independent, in which case we call $r = r(\bfP)$
the rank of the system.
We also fix coefficients $\lambda_1,\dots,\lambda_s \in \Z \smallsetminus \{ 0 \}$
and study the system of $r$ equations given by
\begin{align}
\label{eq:intro:SystPols}
	\lambda_1 \bfP( \bfx_1 ) + \dotsb + \lambda_s \bfP( \bfx_s ) = 0,
\end{align}
with variables $\bfx_1,\dots,\bfx_s \in \Z^d$.
In order to solve this system in variables belonging
to subsets of $\Z^d$, we make
the additional assumption
that~\eqref{eq:intro:SystPols} is translation-invariant\footnote{
By this we mean that
when $(\bfx_1,\dots,\bfx_s)$ 
is a solution of~\eqref{eq:intro:SystPols},
so is $(\bfx_1 + \bfu,\dots,\bfx_s + \bfu)$ for every $\bfu \in \Z^d$.},
which imposes the condition $\lambda_1 + \dotsb + \lambda_s = 0$
that we assume from now on.
Our assumption of homogeneity also 
guarantees that~\eqref{eq:intro:SystPols} is dilation-invariant.
Depending on the equation under study, one also typically
defines a notion of non-trivial solution which, at the very least,
excludes the trivial diagonal solutions 
$\bfx_1 = \dots = \bfx_s$.

Via Taylor expansions, one way to obtain translation-invariance in~\eqref{eq:intro:SystPols} 
is to pick a linearly independent subset $\bfP$ 
of the set of all partial derivatives of a given family of
polynomials $Q_1,\dots,Q_h \in \Z[x_1,\dots,x_d]$,
in which case we say that $\bfP$ is the seed system generated
by the seed polynomials $Q_1,\dots,Q_h$.
We also recall a more general definition of Parsell et al.~\cite[Section~2]{PPW:Multidim}:
we say that the system $\bfP$ is translation-dilation invariant if
there exists a lower unitriangular matrix $C(\bfxi)$
and a vector $c_0(\bfxi)$ whose entries are integer polynomials in $\bfxi$
such that
\begin{align*}
	&\phantom{(\bfx,\bfxi \in \Z^d)} &
	\bfP( \bfx + \bfxi ) &= c_0(\bfxi) + C(\bfxi) \bfP( \bfx )
	&&(\bfx,\bfxi \in \Z^d).
\end{align*}
It can be verified that this class of systems of polynomials
contains the seed systems, and that it ensures again
translation-dilation invariance in the equation~\eqref{eq:intro:SystPols}.

A classical question in additive combinatorics is to bound from
below the lowest admissible density $\delta = \delta(N)$ such that
any subset $A$ of $[N]^d$ of density at least $\delta$
contains a non-trivial solution to~\eqref{eq:intro:SystPols},
as $N$ tends to infinity.
When specializing to the equation $x_1 + x_3 = 2x_2$ detecting
three-term arithmetic progressions,
this covers the classical setting of Roth's theorem~\cite{Roth:Roth},
which says that the equation has a solution with all $x_i$ distinct
in any subset of $[N]$ of density at least $(\log\log N)^{-c}$.
A subsequent argument of Szemerédi~\cite{Szemeredi:Roth} 
and Heath-Brown~\cite{HB:Roth} lowered the admissible density
to $(\log N)^{-c}$, for a small constant $c > 0$.
A new framework was developed by Bourgain~\cite{Bourgain:Roth} to obtain
the exponent $c = 1/2 - \eps$, but in this work we only rely
on the Heath-Brown-Szemerédi machinery.

The study of this question in cases of higher degree or dimension
has generated a fair amount of interest recently.
The work of Smith~\cite{Smith:Diag} and Keil~\cite{Keil:Diag}
concerned the one-dimensional quadratic case $\bfP = (x,x^2)$.
Smith~\cite{Smith:VinoSyst} has studied the degree-$k$ case $\bfP = (x,\dots,x^k)$,
and Prendiville~\cite{Prendiville:BinaryForms}
has investigated the two-dimensional setting where
$\bfP$ is given by a binary form and its derivatives.
Prendiville's result was later generalized in work of
Parsell et al.~\cite{PPW:Multidim} to the class of all 
translation-dilation invariant systems of polynomials.
In these references,
doubly logarithmic bounds of the shape $(\log\log N)^{-c(s)}$
were obtained via the method of Roth~\cite{Roth:Roth},
for a number of variables sufficient
to count the number of solutions to~\eqref{eq:intro:SystPols} 
in $[N]^d$ by the circle method.
In our previous work~\cite{me:logkeil}, we obtained logarithmic bounds
of the shape $(\log N)^{-c(s,\bflambda)}$ 
for the case $\bfP = (x,x^2)$, by adapting the 
Heath-Brown-Szemerédi method~\cite{HB:Roth,Szemeredi:Roth}.
The purpose of this work is to generalize this
result to cases of larger degree or dimension.

The discussion of our main theorem requires a little more context,
but we can start by stating a representative result.
Following Parsell et al.~\cite{PPW:Multidim},
we say that $(\bfx_1,\dots,\bfx_s) \in (\Z^d)^s$ is 
a projected solution of~\eqref{eq:intro:SystPols}
when all of the $\bfx_i$ belong to a proper affine
subspace of $\Q^d$ ;
in dimension one this is equivalent to $\bfx_1 = \dots = \bfx_s$.
We say that $\bfx$ is a subset-sum solution 
when there exists a partition
$[s] = E_1 \bigsqcup \dotsb \bigsqcup E_\ell$ with $\ell \geq 2$
such that, for all $j \in [\ell]$,
$\sum_{i \in E_j} \lambda_i = 0$
and $\sum_{i \in E_j} \lambda_i \bfP(\bfx_i) = 0$.
This second definition is meant to exclude the obvious solutions
obtained by setting the $(\bfx_i)_{i \in E_j}$
to be equal for each $j \in [\ell]$.
Note that the space of projected solutions, and that
of subset-sum solutions are translation-dilation invariant\footnote{
That is, they are invariant under
translations $(\bfx_j)_{1 \leq j \leq s} \mapsto (\bfx_j + \bfu)_{1 \leq j \leq s}$, $\bfu \in \Q^d$
and dilations $(\bfx_j)_{1 \leq j \leq s} \mapsto \gamma (\bfx_j)_{1 \leq j \leq s}$, $\gamma \in \Q$.}.

\begin{theorem}[Additive equations in subsets of monomial surfaces]
\label{thm:intro:SystMonomMultidim}
Let $k \geq 2$, $d \geq 1$, $s \geq 1$ and $\lambda_1, \dots, \lambda_s \in \Z \smallsetminus \{0\}$
be such that $\lambda_1 + \dotsb + \lambda_s = 0$.
Suppose that 
\begin{align*}
	\bfP = ( x_1^{j_1} \cdots x_d^{j_d} ,\ 1 \leq j_1 + \dotsb + j_d \leq k )
\end{align*}
and let $r$ denote the rank of $\bfP$.
Suppose also that the system of equations~\eqref{eq:intro:SystPols}
possesses nonsingular real and $p$-adic solutions for every prime $p$.
When $s \geq 2r(k+1) + 1$, there exists a constant $c(d,k,\bflambda) > 0$
such that every subset of $[N]^d$ of density at least\footnote{
Note that this forces $N$ to be larger than a certain constant
depending on $\bfP$ and $\bflambda$.
The constant $c(d,k,\bflambda)$ absorbs dependencies on $s$,
considered as the dimension of the vector $(\lambda_1,\dots,\lambda_s)$.}
$2(\log N)^{-c(d,k,\bflambda)}$
contains a solution to the system of equations~\eqref{eq:intro:SystPols},
which is neither a projected nor a subset-sum solution.
\end{theorem}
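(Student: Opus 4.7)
The plan is to adapt the Heath-Brown--Szemerédi energy-increment argument used by the author for the case $\bfP = (x,x^2)$ to the higher-dimensional monomial setting, with the Parsell--Prendiville--Wooley mean value theorem playing the role of Vinogradov's mean value theorem. Arguing by contradiction, I will assume that $A \subseteq [N]^d$ has density $\delta \geq 2(\log N)^{-c(d,k,\bflambda)}$ and contains no non-projected, non-subset-sum solution to~\eqref{eq:intro:SystPols}. The count of solutions with variables in $A$ will be written as
\begin{align*}
	T(A) = \int_{\T^r} \prod_{i=1}^s S_A(\lambda_i \bftheta) \, \dbftheta,
	\qquad
	S_A(\bftheta) = \sum_{\bfn \in A} e\big( \bftheta \cdot \bfP(\bfn) \big),
\end{align*}
and I will split $\T^r = \frakM \sqcup \frakm$ into a collection $\frakM$ of major arcs around rationals of small denominator, and the complementary minor arcs $\frakm$.

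On $\frakM$, a standard local-to-global analysis, combined with the assumed nonsingular real and $p$-adic solvability, yields a main term of order $\delta^s N^{ds-K}$. The Parsell--Prendiville--Wooley mean value inequality
\begin{align*}
	\int_{\T^r} |S_{[N]^d}(\bftheta)|^{2s} \, \dbftheta \ll N^{2ds - K + \eps}
\end{align*}
holds precisely in the regime $s \geq 2r(k+1)+1$, and this is what will control the residual error terms there. The projected and subset-sum solutions will be handled separately: the former by restricting variables to proper affine subspaces of $\Q^d$, the latter by partitioning $[s]$ into translation-invariant blocks and applying the scheme inductively. In both cases a strict exponent saving in $N$ appears, so the degenerate counts are dominated by the main term.

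Since by assumption $T(A)$ consists entirely of degenerate solutions, after absorbing the main term and one positivity/H\"older manipulation I will be left with a minor-arc inequality of the form
\begin{align*}
	\int_{\frakm} |\wh{f_A}(\bftheta)|^{2s} \, \dbftheta \gg \delta^{O(1)} N^{2ds - K}
\end{align*}
for the balanced function $f_A = 1_A - \delta 1_{[N]^d}$. The technical heart of the argument is then to upgrade this $L^{2s}$-mass on $\frakm$ into an $L^2$-concentration of $\wh{f_A}$ near a small set of rationals: this is the \emph{truncated restriction estimate} of the title, which I plan to derive by coupling the PPW mean value inequality with a Bourgain-style high-low decomposition adapted to the minor arcs. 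Dualizing such an $L^2$-concentration produces a Bohr set $B \subseteq \Z^d$ of rank $\delta^{-O(1)}$ and radius $\gg \delta^{O(1)} N$ on which $A$ has relative density at least $\delta(1 + c \delta^{O(1)})$.

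The density increment then iterates in the usual way: the density doubles after $O(\delta^{-O(1)})$ steps, and must exceed $1$ after $O(\log \delta^{-1})$ doublings, giving a contradiction provided $c(d,k,\bflambda)$ is chosen small enough. I expect the main obstacle to be the restriction step itself. Because the circle-method threshold $s \geq 2r(k+1)+1$ is already sharp for the major-arc analysis, no loss in the variable count can be tolerated when passing from the minor-arc $L^{2s}$-mass to an $L^2$-concentration; the delicate point will be to couple the PPW mean value estimate with a minor-arc localisation, and then to iterate the resulting increment inside Bohr sets via the translation-dilation invariance of $\bfP$, without accumulating any such losses.
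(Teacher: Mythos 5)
Your outline diverges from the paper's argument (which deduces this theorem from the abstract Theorem~\ref{thm:intro:SystTslInv} by verifying the count bounds~\eqref{eq:intro:NumberSolsBounds} with the Parsell--Prendiville--Wooley asymptotics and the restriction hypotheses~\eqref{eq:intro:RestrEpsFullLinfty},~\eqref{eq:intro:RestrTruncAgain} via Proposition~\ref{thm:remov:TruncRestrMultidim}), and as it stands it has a genuine gap at the iteration step. You propose a one-shot density increment: an $L^2$-concentration of $\wh{f_A}$ near a few rationals, dualized to a Bohr set of rank $\delta^{-O(1)}$ on which the density rises to $\delta(1+c\delta^{O(1)})$, iterated until the density doubles after $\delta^{-O(1)}$ steps. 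An increment of strength $1+c\delta^{O(1)}$ is exactly the Roth-type increment that yields the doubly logarithmic bound $(\log\log N)^{-c}$ already obtained by Parsell et al.; it cannot give the claimed density $2(\log N)^{-c(d,k,\bflambda)}$ once you account for the loss of ambient size at each step. For a degree-$k$ system one cannot keep the ambient structure a Bohr set: the self-similarity needed to restart the argument (rescaling~\eqref{eq:intro:SystPols} by translation-dilation invariance) is available on (cube) progressions, so each step forces a passage to a progression inside the Bohr set, of size $N^{\delta^{O(1)}}$; with $\delta^{-O(1)}$ steps this again lands at $(\log\log N)^{-c}$. The paper avoids this precisely by running the Heath-Brown--Szemer\'edi \emph{energy} increment: a large restricted moment over $R\le(\delta/2)^{-C_1}$ frequencies (Proposition~\ref{thm:add:LargeRestrictedMoment}), simultaneous linearization of the $R$ polynomial phases on cube progressions via Lyall--Magyar Diophantine approximation (Proposition~\ref{thm:add:LinPolPhases}), conditioning via factors, and then a density increment by the factor $1+cR^{c}\ge 1+c$ (Proposition~\ref{thm:add:FinalDensIncr}), so that only $O(\log(1/\delta))$ iterations are needed and the logarithmic density survives. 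Handling the degree-$k$ phases on Bohr sets, which your plan needs, is exactly the technical difficulty the paper flags as open.

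A secondary problem is your major-arc claim: for an arbitrary dense set $A$ there is no ``local-to-global'' evaluation of the weighted sums $S_A(\lambda_i\bftheta)$ on major arcs producing a main term $\delta^s N^{ds-K}$. The main term must come from the unweighted count after expanding $1_A=\delta 1_{[N]^d}+f_A$, with the $2^s-1$ mixed terms controlled by the $L^\infty\to L^{s''}$ estimate (this is how Proposition~\ref{thm:add:LargeMoment} works), and with projected and subset-sum solutions removed using the PPW bounds and the interpolation argument of Lemma~\ref{thm:large:TrivSols}. Your idea of proving the truncated restriction estimate from major-arc information plus Weyl bounds is in the spirit of Section~\ref{sec:remov}, but the step from that estimate to a usable increment requires the factor/cube-progression machinery above, not a Bohr-set dualization.
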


Note that the system of polynomials $(\bfx^{\bfj}, 1 \leq |\bfj| \leq k)$
is generated by the seed polynomials $(\bfx^{\bfj},|\bfj|=k)$.
For that system,
the estimates of Parsell et al.~\cite{PPW:Multidim}
for multidimensional Vinogradov mean values
allow for a circle method treatment 
of the equation~\eqref{eq:intro:SystPols}
in the same range $s \geq 2r(k+1) + 1$,
and this is a substantial input in our proof.
An important aspect of our approach, however, is that we need 
little number theoretic information beyond mean value
estimates to handle dense variables, 
and in the case of the above theorem
the additional requirements consist only 
in simple bounds for local
multidimensional exponential sums.

We now discuss in some depth the Fourier-analytic estimates
involved in the treatment equation~\eqref{eq:intro:SystPols} 
in dense variables, in order to motivate our main result.
We define the weighted and unweighted exponential sums
\begin{align}
\label{eq:intro:ExpSums}
	F_a^{(\bfP)}( \bfalpha ) 
	= \sum_{\bfn \in [N]^d} a(\bfn) e( \bfalpha \cdot \bfP( \bfn ) ),
	\quad		
	F^{(\bfP)}( \bfalpha )
	= \sum_{\bfn \in [N]^d} e( \bfalpha \cdot \bfP( \bfn ) )
	\quad
	(\bfalpha \in \T^r).
\end{align}
The circle method expresses the number of solutions to~\eqref{eq:intro:SystPols}
in a subset $A$ of $[N]^d$ as a product of 
$s$ weighted exponential sums of the above form, 
and therefore obtaining bounds on their $s$-th moments
is of major importance.
Restriction theory~\cite{Tao:RestrExpo,Green:RestrCourse,Wolff:HA} 
provides a valuable framework to
derive such bounds.
When $S$ is a finite subset of $\Z^r$
equipped with a certain measure $\dsigma_S$,
the $L^q \rightarrow L^p$ extension problem is concerned 
with establishing functional estimates of the form
\begin{align*}
	\| (g \dsigma_S)^\wedge \|_{L^p(\T^r)} 
	\leq \| g \|_{\ell^q(S)},
\end{align*}
and it is a dual version of the well-studied restriction problem.
Bourgain~\cite{Bourgain:Squares,Bourgain:ParabI,Bourgain:ParabII,Bourgain:SphereI} 
initiated the study of discrete restriction estimates
for the squares, the sphere and the parabola.
Recently, Wooley~\cite{Wooley:Banff,Wooley:Clay} 
has given a formulation of the discrete restriction conjecture
for systems of homogeneous polynomials of dimension one, but the picture is less
clear in higher dimensions.
Short of guessing the right estimates, we put forward a conjecture
which, when it does hold, provides us with exploitable estimates.
We say that $\bfP$ satisfies the discrete
restriction conjecture when it satisfies the estimate
\begin{align}
\label{eq:intro:RestrCrit}
	\| F_a^{(\bfP)} \|_p^p \lesssim_\eps N^{\eps} \| a \|_2^p
\end{align}
in the subcritical range $p < 2K/d$, the $\eps$-full estimate
\begin{align}
\label{eq:intro:RestrEpsFull}
	\| F_a^{(\bfP)} \|_p^p 
	\lesssim_\eps N^{dp/2 - K + \eps} \| a \|_2^p
\end{align}
at the critical exponent $p = 2K/d$, and the $\eps$-free estimate
\begin{align}
\label{eq:intro:RestrEpsFree}
	\| F_a^{(\bfP)} \|_p^p 
	\lesssim_p N^{dp/2 - K} \| a \|_2^p
\end{align}
in the supercritical range $p > 2K/d$.
In the case $d=1$, it is believed that these
estimates all hold~\cite{Wooley:Banff,Wooley:Clay}.
Adding to the existing terminology, 
we say that $\bfP$ satisfies the weak discrete restriction conjecture when
there exists $\theta > 0$ such that
\begin{align}
\label{eq:intro:RestrTrunc}
	\int_{|F_a^{(\bfP)}| \geq N^{d/2-\theta} \|a\|_2} |F_a^{(\bfP)}|^q \ dm 
	\lesssim_q N^{dq/2 - K} \| a \|_2
\end{align}
for $q > 2K/d$.
This weaker estimate is typically easier to obtain, and can be 
used~\cite{Bourgain:Squares,Bourgain:ParabI} to obtain $\eps$-free estimates for exponents $q > p$ 
whenever an $\eps$-full estimate of the form~\eqref{eq:intro:RestrEpsFull} is known.

Only supercritical estimates are directly relevant to
our problem, and therefore we quote the literature selectively.
Bourgain established respectively in~\cite{Bourgain:Squares} and~\cite{Bourgain:ParabI} 
that~\eqref{eq:intro:RestrEpsFree} holds 
in the full supercritical range
$p > 4$ for $\bfP = (x^2)$ and $p > 6$ for $\bfP = (x,x^2)$.
Keil~\cite{Keil:Diag} found an alternative proof 
of an $L^\infty \rightarrow L^p$ estimate 
for $p > 6$ when $\bfP = (x,x^2)$.

In the case of the $d$-dimensional parabola
$\bfP = (x_1,\dots,x_d,x_1^2+\dotsb+x_d^2)$, which 
in our terminology is a system of dimension $d$ and weight $d + 2$,
Bourgain~\cite[Propositions~3.82,~3.110,~3.114]{Bourgain:ParabI} 
proved the truncated estimate~\eqref{eq:intro:RestrTrunc}
in the whole supercritical range $q > 2(d+2)/d$,
as well as estimates of the form~\eqref{eq:intro:RestrEpsFull}
for $d \in \{ 2,3 \}$, $p > 4$ and for $d \geq 4$, $p \geq 2(d+4)/d$.
Eventually, the powerful decoupling theory of 
Bourgain and Demeter~\cite[Theorem~2.4]{BD:DecouplConj} 
led to the conjectured estimates in all dimensions, that is,~\eqref{eq:intro:RestrCrit} 
and~\eqref{eq:intro:RestrEpsFree} hold respectively for $p = 2(d+2)/d$ and $p > 2(d+2)/d$.

There have also been crucial developments 
for systems of polynomials of large degree.
In that setting a natural object is the 
(multidimensional) Vinogradov mean value
\begin{align*}
	J_{s,\bfP}(N) = \int_{\T^r} |F^{(\bfP)}(\bfalpha)|^{2s} \dbfalpha,
\end{align*}
which counts the number of solutions $\bfn_i,\bfm_i \in [N]^d$
to the sytem of equations
\begin{align*}
	\bfP(\bfn_1) + \dotsb + \bfP(\bfn_s) = 
	\bfP(\bfm_1) + \dotsb + \bfP(\bfm_s).
\end{align*}
A bound of the form $J_{\ell,\bfP}(N) \lesssim_\eps N^{2d\ell - K + \eps}$
for an integer $\ell \geq K$ typically allows
for a successful circle method treatment 
of the system of equations~\eqref{eq:intro:SystPols}
in $s > \ell$ variables.

Let us temporarily specialize to the case 
$\bfP = (x,\dots,x^k)$ with $k \geq 2$, where $K = \tfrac{1}{2} k (k+1)$
and $J_{s,\bfP}(N) = J_{s,k}(N)$ is the usual 
Vinogradov mean value~\cite[Chapter~5]{Vaughan:Book}.
We introduce a new definition to facilitate the statement of later results.

\begin{definition}
\label{thm:intro:skDef}
For $k \geq 2$, we let $s_k$ denote the least integer $s \geq K = \tfrac{1}{2}k(k+1)$ such that
$J_{s,k}(N) \lesssim_\eps N^{2s - K + \eps}$ for every $\eps > 0$.
\end{definition}

We restrict to $s_k \geq K$
since a simple averaging argument~\cite[Section~7]{Vaughan:Book} 
shows that $J_{s,k}(N) \gtrsim N^s + N^{2s - K}$.
The Vinogradov mean value conjecture, now a theorem, states that $s_k = K$,
and we discuss briefly the history leading to this result. 
The case $k = 2$ is known to follow from simple divisor considerations.
Classical work of Vinogradov~\cite{Vaughan:Book} 
established an efficient asymptotic bound
$s_k \leq (3 + o_{k \to \infty}(1)) \cdot k^2 \log k$.
In a major achievement,
Wooley~\cite{Wooley:EffI,Wooley:EffII,Wooley:Cubic}
was able to settle the Vinogradov mean value conjecture for $k = 3$
and to obtain the improved bound\footnote{
The stronger bound $s_k \leq k(k-1)$ for $k \geq 4$ was also announced in~\cite{Wooley:Cubic}.
} $s_k \leq k^2 - 1 \sim_{k \to \infty} 2K$ for $k \geq 4$,
using his efficient congruencing method.
In a very recent breakthrough,
Bourgain, Demeter and Guth~\cite{BDG:VinoMeanValue}
have settled the full Vinogradov mean value conjecture,
that is $s_k = K$, in the remaining cases $k \geq 4$,
through a novel method rooted in multilinear harmonic analysis.

Via the circle method~\cite[Section~9]{Wooley:EffI},
it can be shown that $\int_{\T^k} |F^{(x,\dots,x^k)}|^p \lesssim N^{p - K}$ for $p > 2s_k$.
Together with a well-known squaring argument for even 
moments\footnote{
By this we mean the bound $\| F_a^{(\bfP)} \|^{2s}_{2s} \leq \| F^{(\bfP)} \|_s^s \| a \|_2^{2s}$,
which was used for instance by Bourgain~\cite[Proposition~2.36]{Bourgain:ParabI}
and Mockenhaupt and Tao~\cite[Lemma~5.1]{MT:RestrFF}.},
this shows that an $\eps$-free restriction 
estimate of the form~\eqref{eq:intro:RestrEpsFree} 
holds for $p \geq 4 s_k + 2$, and in fact it holds
for $p > 4s_k$ via an observation of Hughes~\cite{Hughes:EpsRemoval}.
Up until the work of Bourgain-Demeter-Guth, 
the best available bounds on Vinogradov mean values would therefore only produce
an asymptotic range $p > (1+o_{k \to \infty}(1)) \cdot 8K$ in such estimates.
Wooley~\cite{Wooley:Restr} was able to essentially halve this range\footnote{
The larger range $p > 2k(k-1)$
was also announced in~\cite{Wooley:Clay}.},
showing that~\eqref{eq:intro:RestrEpsFree} holds for
$p > 2k(k+1) \sim_{k \to \infty} 4K$, 
and his method extends to systems of polynomials.

We now return to the setting of a general system of polynomials $\bfP$,
and state our main abstract result.
Given a translation-dilation invariant subset $Z$ of $(\Q^d)^s$,
meant to represent a space of trivial solutions
to~\eqref{eq:intro:SystPols}, we define the quantities
\begin{align}
	\label{eq:intro:NDef}
	\calN(N,\bfP,\bflambda) 
	&= \#\{\, \text{solutions $(\bfx_1,\dots,\bfx_s) \in [N]^{ds}$ 
									to~\eqref{eq:intro:SystPols}} \,\}, 
	\\
	\label{eq:intro:NZDef}
	\calN_Z(N,\bfP,\bflambda) 
	&= \#\{\, \text{solutions $(\bfx_1,\dots,\bfx_s) \in [N]^{ds} \cap Z$ 
									to~\eqref{eq:intro:SystPols}} \,\}.
\end{align}

\begin{theorem}
\label{thm:intro:SystTslInv}
Let $s \geq 3$ and $\lambda_1,\dots,\lambda_s \in \Z \smallsetminus \{0\}$
be such that $\lambda_1 + \dotsb + \lambda_s = 0$.
Suppose that $\bfP$ is a system of
$r$ homogeneous polynomials of dimension $d$ and weight $K$
such that the system of equations~\eqref{eq:intro:SystPols}
is translation-invariant, and $Z$ is a 
translation-dilation invariant subset of $(\Q^d)^s$.
Suppose that, for a constant $\omega > 0$
depending on $s$ and $\bfP$,
\begin{align}
\label{eq:intro:NumberSolsBounds}
	\calN(N,\bfP,\bflambda) \gtrsim N^{ds - K}
	\quad\text{and}\quad
	\calN_Z(N,\bfP,\bflambda) \lesssim N^{ds - K - \omega}.
\end{align}
Suppose also that there exist 
real numbers $0 < s'' < s' < s$
and $\theta > 0$ 
depending on $s$ and $\bfP$ such that
the following restriction estimates hold:
\begin{align}
	\label{eq:intro:RestrEpsFullLinfty}
	\int_{\T^r} |F_a^{(\bfP)}|^{s''} \ \dm 
	&\lesssim_\eps N^{ds'' - K + \eps} \| a \|_\infty^{s''}, \\
	\label{eq:intro:RestrTruncAgain}
	\int_{ |F_a^{(\bfP)}| \geq N^{d/2 - \theta} \| a \|_2 } | F_a^{(\bfP)} |^{s'} \ \dm
	&\lesssim N^{ds'/2 - K} \| a \|_2^{s'}.
\end{align}
Then there exists a constant $c(\bfP,\bflambda) > 0$
such that, for every subset $A$ of $[N]^d$ of density
at least $2(\log N)^{-c(\bfP,\bflambda)}$, 
there exists a tuple $(\bfx_1,\dots,\bfx_s) \in A^s \smallsetminus Z$
satisfying~\eqref{eq:intro:SystPols}.
\end{theorem}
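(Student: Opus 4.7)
The plan is to use the Heath-Brown--Szemerédi density increment scheme, following the template of~\cite{me:logkeil} and replacing the standard Fourier $L^2$ bounds by the restriction estimates~\eqref{eq:intro:RestrEpsFullLinfty} and~\eqref{eq:intro:RestrTruncAgain}. Writing $\delta = |A|/N^d$ and $f = 1_A - \delta 1_{[N]^d}$, the count of solutions $(\bfx_1,\dots,\bfx_s) \in A^s$ to~\eqref{eq:intro:SystPols} equals $\int_{\T^r} \prod_i F^{(\bfP)}_{1_A}(\lambda_i \bfalpha) \dbfalpha$, and expanding $1_A = \delta 1_{[N]^d} + f$ produces a leading term of size $\delta^s \calN(N,\bfP,\bflambda) \gtrsim \delta^s N^{ds-K}$ together with $2^s - 1$ mixed error terms, each involving at least one factor of $F^{(\bfP)}_f$. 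Under the density hypothesis $\delta \geq 2(\log N)^{-c}$, the leading term strictly exceeds the contribution $\calN_Z \lesssim N^{ds-K-\omega}$ from the forbidden set $Z$, so that a solution in $A^s \smallsetminus Z$ will exist as soon as the error terms are shown to be negligible.

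To handle the error terms, I would partition $\T^r$ into a structured set $B = \{ \bfalpha : |F^{(\bfP)}_f(\bfalpha)| \geq N^{d/2-\theta} \|f\|_2 \}$ and its complement. On $\T^r \smallsetminus B$ the truncated estimate~\eqref{eq:intro:RestrTruncAgain} applied to $f$ gives $\int |F^{(\bfP)}_f|^{s'}\, \dm \lesssim N^{ds'/2 - K}\|f\|_2^{s'}$, which represents a small power saving against the trivial $L^{s'}$ bound; combining this with~\eqref{eq:intro:RestrEpsFullLinfty} for the remaining factors (inserted via Hölder with $\|1_A\|_\infty = 1$) bounds the corresponding minor-arc contribution by $N^{ds - K - c'}$, negligible next to $\delta^s N^{ds-K}$ as soon as $c$ is small enough. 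This reduces matters to the case where the mass on $B$ is comparable to the main term, in which case $f$ enjoys a genuine spectral concentration $\int_B |F^{(\bfP)}_f|^{s'}\, \dm \gtrsim \delta^{s'} N^{ds'-K}$.

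In that concentration regime I would extract a density increment on a sub-box. Pigeonholing on a rational major-arc decomposition $B \cap \frakM \subseteq \bigsqcup_{q \leq Q} \frakM_q$ with translation-dilation compatible denominators, and using Weyl-type minor-arc decay to discard the genuinely oscillatory part of $B$, localizes the mass of $f$ near finitely many rational points of small denominator. The translation-dilation invariance of $\bfP$ then ensures that the associated Bohr-type neighborhood contains a translated sub-box $P \subseteq [N]^d$ of side length $N' \geq N^{c_0}$ on which $A$ has relative density at least $\delta(1 + \kappa)$ for some absolute $\kappa > 0$. Iterating the density increment $O(\delta^{-1})$ times produces a contradiction with $\delta \leq 1$ and concludes the argument.

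The hard part will lie in the passage from the $L^{s'}$ concentration on $B$ to an honest physical density increment on $P$: the truncated estimate~\eqref{eq:intro:RestrTruncAgain} only controls a tail of $F^{(\bfP)}_f$ above a threshold rather than its full $L^2$ mass, so Plancherel is not directly available and the usual Chang-type spectral structure theorems do not apply. One has to combine~\eqref{eq:intro:RestrTruncAgain} with a local Weyl-differencing argument, exploiting translation-dilation invariance of $\bfP$, in order to identify $B$ with a genuine polynomial major-arc set and to propagate the increment to a translate of a polynomial-sized sub-box amenable to the same argument at the next iteration. Ensuring that $N'$ remains polynomial in $N$ at each step, rather than shrinking to $N^{o(1)}$, is precisely what keeps the final exponent $c(\bfP,\bflambda)$ positive and produces the logarithmic bound claimed in the theorem.
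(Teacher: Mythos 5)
Your opening paragraph correctly sets up the ``main term plus $2^s-1$ error terms'' expansion and the role of the non-trivial-solutions bound, and you correctly identify that the difficulty lies in exploiting the \emph{truncated} restriction estimate~\eqref{eq:intro:RestrTruncAgain}. But from the second paragraph onward the argument diverges from what is actually needed, and there are real gaps.

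First, the iteration scheme you describe is internally inconsistent and, more importantly, is the wrong one for a $(\log N)^{-c}$ bound. You claim a density increment ``by a factor $(1+\kappa)$ for some absolute $\kappa>0$'' on a cube of side $N'\geq N^{c_0}$, and then propose to iterate $O(\delta^{-1})$ times. Those two claims do not match: a fixed multiplicative increment takes $O(\log\delta^{-1})$ steps. Worse, a fixed $\kappa$ never materializes in any version of this problem; what the paper actually proves (Proposition~\ref{thm:add:FinalDensIncr}) is an increment $\delta\to(1+cR^{c})\delta$ on a cube of side $N^{cR^{-2k}}$ for some $1\leq R\leq (\delta/2)^{-C}$, and it is the simultaneous dependence of both the gain and the shrinkage on the extracted number of frequencies $R$ that yields a single logarithm. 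Your scheme, as written, would give at best the Roth-style doubly-logarithmic bound.

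Second, the entire Heath–Brown–Szemerédi \emph{energy increment} machinery is missing. The paper first extracts a large $\ell^s$ moment of the discretized exponential sum $H_{f_A/\delta}$ (Proposition~\ref{thm:add:LargeMoment}), then a large \emph{restricted} moment over only $R\leq (\delta/2)^{-C_1}$ frequencies $\bfxi_1,\dots,\bfxi_R$ (Proposition~\ref{thm:add:LargeRestrictedMoment}), then simultaneously linearizes the $R$ associated polynomial phases using the Lyall--Magyar simultaneous diophantine approximation (Propositions~\ref{thm:add:SimultDiophApprox} and~\ref{thm:add:LinPolPhases}), producing a partition of $[N]^d$ into cube progressions on which these phases are essentially constant, and finally replaces $f_A$ by its conditional expectation $\wt f_A = \EB{f_A 1_{\Xi^c}}$ over that partition (Proposition~\ref{thm:add:LargeMomentConditioned}). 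None of this appears in your proposal. Your ``pigeonholing on a rational major-arc decomposition'' and ``Weyl-type minor-arc decay to discard the oscillatory part of $B$'' are not how the argument proceeds: the level set $B=\{|F_f|\geq N^{d/2-\theta}\|f\|_2\}$ is never identified with a major-arc set, and no Weyl differencing of the unweighted exponential sum is performed anywhere in Section~\ref{sec:add}.

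Third, and this is the most important miss, you flag the correct difficulty — ``the truncated estimate only controls a tail\dots so Plancherel is not directly available'' — but your proposed resolution is not the paper's. The actual mechanism (Lemma~\ref{thm:csq:RestAlmostCstFct}, Proposition~\ref{thm:add:HBounds}) is that the truncated estimate~\eqref{eq:intro:RestrTruncAgain} combined with the $L^\infty\to L^{s''}$ bound~\eqref{eq:intro:RestrEpsFullLinfty} yields a genuine $L^2\to L^{s'}$ estimate for functions $f$ with $\|f\|_\infty/\|f\|_{L^2[N]}\leq N^\nu$, because such functions contribute nothing to the moment tail below the threshold. This is precisely why one conditions the balanced function first: $\wt f_A$ satisfies $\|\wt f_A\|_\infty\leq 1$ and $\|\wt f_A\|_{L^2[N]}\geq \delta^C$, so its $L^\infty/L^2$ ratio is polynomial in $\delta^{-1}\ll N^\nu$ and the truncated estimate may be used as if it were complete. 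Reaching for ``local Weyl-differencing'' and ``Chang-type spectral structure'' here would not substitute for this, and you have not indicated how it would close the argument. Without the conditioning step and the accompanying $L^\infty/L^2$ observation, your plan does not get off the ground.
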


We first comment on the assumptions of this theorem.
The bounds~\eqref{eq:intro:NumberSolsBounds} essentially mean
that the circle method is successful in estimating the number
of non-trivial solutions to~\eqref{eq:intro:SystPols}.
The restriction estimates~\eqref{eq:intro:RestrEpsFullLinfty}
and~\eqref{eq:intro:RestrTruncAgain} 
are the main analytic information
needed for the argument, and they are stronger
than an $L^\infty \rightarrow L^p$ estimate 
\begin{align*}
	\| F_a^{(\bfP)}\|_p^p \lesssim N^{dp - K} \| a \|_\infty^p
\end{align*}
with $p < s$, used in the method of Roth~\cite{Roth:Roth},
but weaker than an $L^2 \rightarrow L^p$ estimate~\eqref{eq:intro:RestrEpsFree}
with $p < s$, used in the Heath-Brown-Szemerédi argument~\cite{HB:Roth,Szemeredi:Roth,me:logkeil}.
Note that if we have $J_{\ell}(N,\bfP) \lesssim_\eps N^{2d\ell - K + \eps}$
for an integer $\ell \geq K$, then an $L^\infty \rightarrow L^{2\ell}$ 
estimate of the form~\eqref{eq:intro:RestrEpsFullLinfty}
with $s''=2\ell$ automatically holds\footnote{
This follows from the simple bound 
$\| F_a^{(\bfP)} \|_{2s}^{2s} \leq \| F^{(\bfP)} \|_{2s}^{2s} \| a \|_\infty^{2s}$
for integers $s \geq 1$.}.
For this reason, assumption~\eqref{eq:intro:RestrEpsFullLinfty} is typically 
verified in practice when one is using Vinogradov mean value bounds
to estimate the number of solutions $\calN(N,\bfP,\bflambda)$,
which is the case for systems of large degree.

Theorem~\ref{thm:intro:SystTslInv} constitutes an abstract generalization
of its predecessor~\cite[Theorem~2]{me:logkeil},
and its proof is very similar in dimension one 
when a full $L^2 \rightarrow L^p$ restriction estimate 
of the form~\eqref{eq:intro:RestrEpsFree} is known.
In the extension to the multidimensional setting,
the only substantial change to the original energy increment strategy
occurs in the technical linearization part of the 
argument~\cite[Section~9]{me:logkeil}, 
and there we employ the framework of
factors introduced to additive combinatorics 
by Green and Tao~\cite{Green:Factors,Tao:Factors} 
to handle effectively the computations in higher dimensions.
Finally, we need a new observation to exploit truncated restriction estimates
of the form~\eqref{eq:intro:RestrTruncAgain} instead of complete ones, which is that
for the kind of weight functions that arise in the energy increment iteration,
one can afford to ignore the moment tails of associated exponential sums.

We now discuss several consequences of Theorem~\ref{thm:intro:SystTslInv},
starting with the one-dimensional setting.
There the only translation-invariant system of equations
of the form~\eqref{eq:intro:SystPols} up to equivalence is
\begin{align}
	\label{eq:intro:SystMonom}
	&\phantom{(1 \leq j \leq k)} &
	\lambda_1 x_1^j + \dotsb + \lambda_s x_s^j &= 0
	&&(1 \leq j \leq k),
\end{align}
corresponding to $\bfP = (x,\dots,x^k)$.
Using the optimal bound $s_k = 2K$ 
to verify the assumptions~\eqref{eq:intro:NumberSolsBounds}
and~\eqref{eq:intro:RestrEpsFullLinfty}
of Theorem~\ref{thm:intro:SystTslInv},
as well as a certain truncated restriction estimate of our own,
we obtain the following conclusion.

\begin{theorem}[Additive equations in subsets of monomial curves]
\label{thm:intro:SystMonom}
Let $k \geq 3$ and $K = \tfrac{1}{2}k(k+1)$. 
Let $s \geq 3$ and $\lambda_1, \dots, \lambda_s \in \Z \smallsetminus \{0\}$
be such that $\lambda_1 + \dotsb + \lambda_s = 0$.
Suppose that the system of equations~\eqref{eq:intro:SystMonom}
possesses nonsingular real and $p$-adic solutions for every prime $p$.
When $s > 2K + 4$,
there exists a constant $c(k,\bflambda) > 0$ 
such that every subset of $[N]$ of density 
at least $2(\log N)^{-c(k,\bflambda)}$
contains a solution to the system of equations~\eqref{eq:intro:SystMonom},
which is neither a projected nor a subset-sum solution.
\end{theorem}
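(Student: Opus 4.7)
The plan is to deduce Theorem~\ref{thm:intro:SystMonom} from the master Theorem~\ref{thm:intro:SystTslInv} applied to $\bfP = (x, x^2, \dots, x^k)$, which has dimension $d = 1$, rank $r = k$, and weight $K = \tfrac{1}{2}k(k+1)$. We take $Z \subset (\Q)^s$ to be the union of the space of projected solutions (in dimension one, the diagonal $\{x_1 = \dots = x_s\}$) and the space of subset-sum solutions; both are translation-dilation invariant by the footnote accompanying that definition. It then suffices to verify the four hypotheses of Theorem~\ref{thm:intro:SystTslInv}: the circle-method bounds~\eqref{eq:intro:NumberSolsBounds} on $\calN$ and $\calN_Z$, the $L^\infty \to L^{s''}$ estimate~\eqref{eq:intro:RestrEpsFullLinfty}, and the truncated $L^2 \to L^{s'}$ estimate~\eqref{eq:intro:RestrTruncAgain}.

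For the lower bound in~\eqref{eq:intro:NumberSolsBounds}, the Bourgain-Demeter-Guth theorem~\cite{BDG:VinoMeanValue} gives $s_k = K$, so for $s > 2K$ (which holds since $s > 2K + 4$) the Hardy-Littlewood method yields the asymptotic $\calN(N,\bfP,\bflambda) \sim \frakS \frakJ \cdot N^{s - K}$; the nonsingular real and $p$-adic solvability assumption ensures $\frakS, \frakJ > 0$. For the upper bound on $\calN_Z$, the diagonal contributes at most $N$, while a subset-sum partition $[s] = E_1 \sqcup \dots \sqcup E_\ell$ with $\ell \geq 2$ decouples into independent subsystems. Since $\lambda_i \neq 0$ and $\sum_{i\in E_j}\lambda_i = 0$ force $|E_j| \geq 2$, each block of size $m_j$ is counted either by the circle method (when $m_j > 2K$) as $\ll N^{m_j - K}$ or by the trivial bound $\ll N^{m_j - 1}$ exploiting translation invariance inside the block; summing over partitions shows $\calN_Z \ll N^{s - K - \omega}$ for some $\omega > 0$ provided $s > 2K + c$ for a small constant $c$, and $s > 2K + 4$ provides ample room.

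For the $L^\infty \to L^{s''}$ estimate, we take $s'' = 2K$ and invoke the trivial majorization $\|F_a^{(\bfP)}\|_{2K}^{2K} \leq \|F^{(\bfP)}\|_{2K}^{2K} \|a\|_\infty^{2K} \lesssim_\eps N^{2K - K + \eps} \|a\|_\infty^{2K}$, an immediate consequence of $s_k = K$. For the truncated restriction estimate~\eqref{eq:intro:RestrTruncAgain}, we need some exponent $s' \in (s'', s) = (2K, s)$ and some $\theta > 0$ for which the level-set bound holds; since $s > 2K + 4$ strictly exceeds $s'' = 2K$, such an $s'$ is available. This truncated estimate is the main analytic input not directly furnished by~\cite{BDG:VinoMeanValue}: the strategy is to combine the critical Vinogradov mean value with a pigeonhole argument on dyadic level sets of $|F_a^{(\bfP)}|$, in the spirit of Bourgain's treatment of the squares and of the parabola~\cite{Bourgain:Squares, Bourgain:ParabI}, to trade the desired extra exponent $s' - 2K > 0$ against the restriction to large values of $|F_a^{(\bfP)}|$.

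The most substantial step in this plan is establishing the truncated restriction estimate~\eqref{eq:intro:RestrTruncAgain} at some supercritical exponent $s' < s$; this is precisely the ``certain truncated restriction estimate of our own'' alluded to in the discussion preceding the theorem, and it is what distinguishes the present argument from a direct application of a full $L^2 \to L^p$ restriction estimate. Once (i)--(iv) are in hand, Theorem~\ref{thm:intro:SystTslInv} applies and produces, inside any subset of $[N]$ of density at least $2(\log N)^{-c(k,\bflambda)}$, a solution of~\eqref{eq:intro:SystMonom} lying outside $Z$, i.e.\ neither projected nor subset-sum.
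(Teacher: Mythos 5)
Your overall strategy is the paper's: apply Theorem~\ref{thm:intro:SystTslInv} to $\bfP=(x,\dots,x^k)$ with $Z$ the projected plus subset-sum solutions, get the lower bound in~\eqref{eq:intro:NumberSolsBounds} from the circle method with $s_k=K$ (Bourgain--Demeter--Guth), and get~\eqref{eq:intro:RestrEpsFullLinfty} with $s''=2K$ from the trivial majorization by $J_{K,k}(N)$. However, two of your verifications have genuine gaps. First, your bound on the subset-sum solutions does not work as stated. You propose to count each block $E_j$ either by the circle method when $m_j=|E_j|>2K$, or by the trivial linear-equation bound $\lesssim N^{m_j-1}$ otherwise. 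If \emph{every} block has size at most $2K$ --- e.g.\ $s=2K+5$ split into two blocks of sizes about $K+2$ and $K+3$ --- your bounds only give $\lesssim N^{s-\ell}\leq N^{s-2}$, which is far weaker than the required $N^{s-K-\omega}$ since $K=\tfrac12 k(k+1)\geq 6$. One needs a nontrivial moment bound for blocks of intermediate size: the paper (Lemma~\ref{thm:large:TrivSols}) bounds the block count by $\|F\|_{m_j}^{m_j}$ and interpolates between $\|F\|_2^2=N$ (injectivity of $\bfP$ plus orthogonality) and $\|F\|_s^s\lesssim_\eps N^{s-K+\eps}$; alternatively, the subcritical bound $\|F\|_{m}^{m}\leq\|F\|_{2K}^{m}\lesssim_\eps N^{m/2+\eps}$ for $m\leq 2K$, coming from $J_{K,k}(N)\lesssim_\eps N^{K+\eps}$, would also close this case. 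As written, this step of your argument fails.

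Second, the truncated restriction estimate~\eqref{eq:intro:RestrTruncAgain} --- which you rightly identify as the main analytic input --- is left as a vague sketch, and the sketch does not account for the hypothesis $s>2K+4$ in the statement. You write as if any supercritical $s'\in(2K,s)$ were available, but the truncated estimate is not known down to $2K$; the threshold $2K+4$ is exactly the range $p>2K+4$ in which the paper proves it (Proposition~\ref{thm:remov:TruncRestr}). The paper's proof is a discrete Tomas--Stein level-set argument: from $\eta^2 N|E_\eta|^2\leq\langle f_0\ast F,f_0\rangle$ one decomposes the \emph{unweighted} Weyl sum $F$ into major and minor arcs, bounds the minor arcs by Weyl--Vinogradov estimates, majorizes the major-arc piece and bounds its $L^1$ norm by the singular series and singular integral of Tarry's problem, whose convergence for $p>K+2$ and $p>K+1$ (Hua, Arkhipov et al.) yields $|E_\eta|\lesssim_p N^{-K}\eta^{-2p}$ for $\eta\geq N^{-\tau/2+\eps}$, and integrating over level sets gives the truncated estimate for $p>2K+4$. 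Notably this uses only major-arc information and a Weyl-type inequality, \emph{not} the critical mean value of~\cite{BDG:VinoMeanValue}; your proposed route of combining the critical Vinogradov mean value with dyadic pigeonholing is not worked out, and a naive squaring or $\eps$-removal argument from $J_{K,k}$ only reaches exponents of order $4K$, not $2K+4$. Without an actual proof of the truncated estimate in the range $p>2K+4$ (or an explanation of where that range comes from), the deduction of the theorem is incomplete.
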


Note that, critically, our approach bypasses the need
for complete $L^2 \rightarrow L^p$ restriction estimates,
which are at present only known~\cite{Wooley:Restr} 
for $p > 2k(k+1) \sim_{k \to \infty} 4K$.
For this reason, we are able to reach a number $s$ of variables close
to the limit of the circle method, which is $s > 2K$ in this setting.
Furthermore, this number of variables could be attained
if one only knew the truncated estimate~\eqref{eq:intro:RestrTrunc}
in the range $p > 2K$.

For general systems of polynomials of large degree,
the most general conclusion we can obtain is the following,
of which Theorem~\ref{thm:intro:SystMonomMultidim}
is a special case.

\begin{theorem}[Additive equations in subsets of polynomial surfaces]
\label{thm:intro:SystPols}
Let $s \geq 1$ and $\lambda_1, \dots, \lambda_s \in \Z \smallsetminus \{0\}$
be such that $\lambda_1 + \dotsb + \lambda_s = 0$.
Suppose that $\bfP$ is a reduced translation-dilation invariant
system of polynomials having 
dimension $d$, rank $r$, degree $k$ and weight $K$.
Suppose also that the system of equations~\eqref{eq:intro:SystPols}
possesses nonsingular real and $p$-adic solutions for every prime $p$.
When $k \geq 2$ and $s > \max( 2r(k+1) , K^2 + d )$, 
there exists a constant $c(\bfP,\bflambda) > 0$
such that every subset of $[N]^d$ of density at least 
$2(\log N)^{-c(\bfP,\bflambda)}$
contains a solution to the system of equations~\eqref{eq:intro:SystPols},
which is neither a projected nor a subset-sum solution.
\end{theorem}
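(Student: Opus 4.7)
The plan is to deduce Theorem~\ref{thm:intro:SystPols} from the abstract Theorem~\ref{thm:intro:SystTslInv} by taking $Z$ to be the union of the sets of projected and subset-sum solutions of~\eqref{eq:intro:SystPols}. As noted in the discussion preceding Theorem~\ref{thm:intro:SystMonomMultidim}, both of these spaces are translation-dilation invariant, hence so is $Z$. It then suffices to verify the three analytic hypotheses of Theorem~\ref{thm:intro:SystTslInv}: the counting bounds~\eqref{eq:intro:NumberSolsBounds}, the moment estimate~\eqref{eq:intro:RestrEpsFullLinfty}, and the truncated restriction estimate~\eqref{eq:intro:RestrTruncAgain}.

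For the lower bound on $\calN(N,\bfP,\bflambda)$, I would run the Hardy--Littlewood circle method on the system~\eqref{eq:intro:SystPols}, invoking the multidimensional Vinogradov mean value estimates of Parsell, Prendiville and Wooley to handle the minor arcs in the admissible range $s > 2r(k+1)$, while the nonsingular real and $p$-adic solvability hypothesis provides the strict positivity of the singular series and singular integral on the major arcs. The companion upper bound on $\calN_Z(N,\bfP,\bflambda)$ splits into two contributions. Projected solutions are confined to a proper rational affine subspace of $\Q^d$; after a rational change of coordinates this effectively reduces the number of scalar variables by $s$, so that re-running the circle method in the smaller system yields a saving of at least one power of $N$. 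For each partition $[s]=E_1\sqcup\dots\sqcup E_\ell$ with $\ell \geq 2$, the subset-sum constraint decomposes~\eqref{eq:intro:SystPols} into $\ell$ independent translation-invariant subequations, to which the circle method (combined with trivial bounds for subsystems with too few variables) can be applied separately; the product of the resulting counts is of order $O(N^{ds-K-\omega'})$ for some $\omega'>0$, since $\ell\geq 2$ forces more than one independent weight constraint. Summing over the finitely many partitions and subspaces then delivers~\eqref{eq:intro:NumberSolsBounds}.

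For~\eqref{eq:intro:RestrEpsFullLinfty}, I would choose $s''=2r(k+1)$, which lies in $(0,s)$ by the hypothesis $s>2r(k+1)$, and combine the trivial pointwise bound $|F_a^{(\bfP)}(\bfalpha)| \leq \|a\|_\infty \, |F^{(\bfP)}(\bfalpha)|$ with the Parsell--Prendiville--Wooley mean value estimate $J_{r(k+1),\bfP}(N)\lesssim_\eps N^{2dr(k+1)-K+\eps}$ to obtain $\int_{\T^r}|F_a^{(\bfP)}|^{s''}\,\dm \leq \|a\|_\infty^{s''}\,J_{s''/2,\bfP}(N) \lesssim_\eps N^{ds''-K+\eps}\|a\|_\infty^{s''}$, which is exactly~\eqref{eq:intro:RestrEpsFullLinfty}.

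The main obstacle, and the technical heart of the argument, is to establish the truncated restriction estimate~\eqref{eq:intro:RestrTruncAgain} for some $s'\in(s'',s)$ and $\theta>0$; this is where the hypothesis $s>K^2+d$ enters. I would prove such an estimate by a Hardy--Littlewood dissection of $\T^r$ relative to a suitable polynomial scale: on minor arcs, a Weyl-type decay $|F^{(\bfP)}(\bfalpha)|\ll N^{d-\sigma}$ extracted from the Parsell--Prendiville--Wooley framework for translation-dilation invariant systems makes the truncation threshold $|F_a^{(\bfP)}(\bfalpha)|\geq N^{d/2-\theta}\|a\|_2$ unattainable once $\theta$ is small enough, so the relevant integration is confined to major arcs. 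On each major arc I would approximate $F_a^{(\bfP)}$ by a factorized kernel consisting of a local exponential sum on $\Z^d/q\Z^d$ times a smooth integral over the dilated box $\BoxN$, bound the resulting $L^{s'}$ contribution via Plancherel in $a$ together with pointwise estimates for the local factor, and sum over major arc denominators. The numerical threshold $K^2+d$ emerges from the Hölder balance between the Weyl decay, the volume of the major arcs, and the available mean value input. Once this truncated estimate is secured, all three hypotheses of Theorem~\ref{thm:intro:SystTslInv} are verified and the theorem follows.
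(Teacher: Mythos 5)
Your overall strategy is the same as the paper's: take $Z$ to be the union of projected and subset-sum solutions, verify the hypotheses of Theorem~\ref{thm:intro:SystTslInv} using the Parsell--Prendiville--Wooley mean value and counting results, and supply a truncated restriction estimate proved with only major-arc information. However, there is a genuine gap in how you propose to prove the truncated estimate~\eqref{eq:intro:RestrTruncAgain}. You claim that a Weyl-type minor-arc bound on the \emph{unweighted} sum $F^{(\bfP)}$ makes the truncation threshold $|F_a^{(\bfP)}|\geq N^{d/2-\theta}\|a\|_2$ unattainable on the minor arcs, so that only major arcs need to be integrated. This is false: the weighted sum is not pointwise controlled by the unweighted one. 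For instance, taking $a(\bfn)=N^{-d/2}e(-\bfalpha_0\cdot\bfP(\bfn))$ for a minor-arc point $\bfalpha_0$ gives $|F_a^{(\bfP)}(\bfalpha_0)|=N^{d/2}\|a\|_2$, so the level set in the truncation can meet the minor arcs for any $\theta>0$. The correct mechanism (and the one the paper uses, in Section~\ref{sec:remov} and its multidimensional analogue Proposition~\ref{thm:remov:TruncRestrMultidim}) is a Tomas--Stein duality argument: one bounds the level-set measure via $\eta^2 N^d|E_\eta|^2\leq\langle f_0\ast F^{(\bfP)},f_0\rangle$, and it is inside this convolution that the minor-arc bound on the unweighted kernel and the $L^1$ bound on a major-arc majorant (requiring convergence of the local singular series and integral) are applied; the weighted sum itself is never compared pointwise to $F^{(\bfP)}$. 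Without this (or an equivalent) device, your major-arc-only dissection of the integral over $\{|F_a^{(\bfP)}|\geq N^{d/2-\theta}\|a\|_2\}$ does not get off the ground.

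The same confusion appears in your verification of~\eqref{eq:intro:RestrEpsFullLinfty}: the asserted ``pointwise bound'' $|F_a^{(\bfP)}(\bfalpha)|\leq\|a\|_\infty|F^{(\bfP)}(\bfalpha)|$ is not true pointwise (again, weights can defeat the cancellation in $F^{(\bfP)}$). The inequality you actually need, and the one the paper invokes, is the even-moment comparison $\|F_a^{(\bfP)}\|_{2u}^{2u}\leq\|a\|_\infty^{2u}\|F^{(\bfP)}\|_{2u}^{2u}$ for integers $u\geq1$, obtained by expanding the moment as a weighted count of solutions; with $u=r(k+1)$ and the PPW mean value bound this does give~\eqref{eq:intro:RestrEpsFullLinfty} with $s''=2r(k+1)$, so this step is fixable. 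Finally, a smaller point of attribution: in the paper the threshold $K^2+d$ comes from the counting input~\eqref{eq:intro:NumberSolsBounds} (the PPW asymptotics and the bounds on projected and subset-sum solutions, the latter alternatively via Lemma~\ref{thm:large:TrivSols}), not from the truncated restriction estimate, which holds for $s'>2k(r+1)\leq 2r(k+1)$ by Lemma~\ref{thm:large:InjAndIneq} and hence imposes no extra constraint; your expectation that $K^2+d$ ``emerges'' from the restriction analysis is not how the ranges actually fit together.
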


To prove this result, one may choose to appeal to
either the $L^2 \rightarrow L^p$ restriction estimates of Wooley~\cite{Wooley:Restr},
or to weaker truncated restriction estimates that we will provide.
The assumptions~\eqref{thm:intro:SystTslInv} on the number
of integer solutions are verified by quoting
the asymptotic formulas of Parsell et al.~\cite{PPW:Multidim},
based on the efficient congruencing method.
As a parenthesis, we remark that in the special case
where the coefficients $(\lambda_i)$ in~\eqref{eq:intro:SystPols}
take a symmetric form $(\mu_1,-\mu_1,\dots,\mu_\ell,-\mu_\ell)$,
a simple Cauchy-Schwarz argument
yields the conclusion of Theorems~\ref{thm:intro:SystMonomMultidim},~\ref{thm:intro:SystMonom}
and~\ref{thm:intro:SystPols} at power-like densities $N^{-c(\bfP)}$ instead
(see Proposition~\ref{thm:large:SystPolsSym} below).
It is expected~\cite{Bourgain:VinoSurvey} that the 
decoupling theory of Bourgain-Demeter-Guth could also lead to
to progress on bounds for multidimensional Vinogradov mean values,
which could in turn improve the range of validity of Theorem~\ref{thm:intro:SystPols}.

Finally, we consider the parabola system
\begin{align}
\label{eq:intro:SystParab}
	\begin{split}
	\lambda_1 \bfx_1 + \dotsb + \lambda_s \bfx_s &= 0, \\
	\lambda_1 |\bfx_1|^2 + \dotsb + \lambda_s |\bfx_s|^2 &= 0
	\end{split}
\end{align}
in variables $\bfx_1,\dots,\bfx_s \in \Z^d$,
which corresponds to the system of polynomials
\begin{align*}
	\bfP = (x_1,\dots,x_d,x_1^2 + \dotsb + x_d^2),
\end{align*}
generated by the seed polynomial $P(\bfx) = |\bfx|^2$.
When all the $\lambda_i$ but one have the same sign,
say all but $\lambda_s$, every solution $\bfx$ to~\eqref{eq:intro:SystParab}
verifies 
\begin{align*}
	\lambda_1 |\bfx_1-\bfx_s|^2 + \dotsb + \lambda_{s-1} |\bfx_{s-1} - \bfx_s|^2 = 0
\end{align*}
by translation-invariance,
and by definiteness we have $\bfx_1 = \dots = \bfx_s$.
Barring this unfortunate circumstance, which always occurs for $s = 3$, 
we can obtain a positive result for a number of dense variables
exceeding the critical exponent $p_d = 2(d+2)/2$ of the discrete parabola,
which directly generalizes~\cite[Theorem~2]{me:logkeil}.

\begin{theorem}[Additive equations in subsets of the parabola]
\label{thm:intro:SystParab}
Let $d,s \geq 1$ and suppose that
$\lambda_1, \dots, \lambda_s \in \Z \smallsetminus \{0\}$
are such that $\lambda_1 + \dotsb + \lambda_s = 0$
and at least two of the $\lambda_i$ are positive,
and at least two are negative.
There exists a constant $c(d,\bflambda) > 0$ 
such that every subset of $[N]^d$ of density 
at least $2(\log N)^{-c(d,\bflambda)}$
contains a solution 
to the system of equations~\eqref{eq:intro:SystParab},
which is neither a subset-sum solution
nor a solution with two equal coordinates,
provided that
\begin{enumerate}
	\item $d = 1$ and $s \geq 7$, or
	\item $d = 2$ and $s \geq 5$, or
	\item $d \geq 3$ and $s \geq 4$.
\end{enumerate}
\end{theorem}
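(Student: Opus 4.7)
The strategy is to apply Theorem~\ref{thm:intro:SystTslInv} to the system of polynomials $\bfP=(x_1,\dots,x_d,|\bfx|^2)$, which is generated by the single seed polynomial $P(\bfx)=|\bfx|^2$ and has dimension $d$, rank $r=d+1$, degree $k=2$, and weight $K=d+2$. The system~\eqref{eq:intro:SystParab} is then translation-invariant under the hypothesis $\sum_i\lambda_i=0$. I would take $Z\subset(\Q^d)^s$ to be the union of the set of subset-sum solutions of~\eqref{eq:intro:SystParab} and the set of tuples with two equal coordinates $\bfx_i=\bfx_j$; this $Z$ is manifestly closed under simultaneous translations and dilations, as required.

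The lower bound $\calN(N,\bfP,\bflambda)\gtrsim N^{ds-K}$ follows from a standard Hardy--Littlewood circle-method treatment of~\eqref{eq:intro:SystParab}: local solvability over $\R$ is guaranteed by the sign hypothesis that at least two $\lambda_i$ are positive and at least two are negative (one can exhibit a nonsingular archimedean solution by pairing a positive coefficient with a negative one and using the extra degree of freedom in the remaining variables), and nonsingular $p$-adic solvability for all $p$ is standard for the parabola (in particular, the second moment equation is a quadratic form with enough variables to apply Hensel). For the upper bound $\calN_Z(N,\bfP,\bflambda)\lesssim N^{ds-K-\omega}$ I would treat the two pieces of $Z$ separately: solutions with $\bfx_i=\bfx_j$ collapse into a system in at most $s-1$ vector variables whose count is trivially $O(N^{d(s-1)})\le N^{ds-K-(d+2-d)}$, and subset-sum solutions factor through the partition $[s]=E_1\sqcup\cdots\sqcup E_\ell$, each block contributing at most $O(N^{d|E_j|-K})$ if $|E_j|\ge 3$ or $O(N^d)$ if $|E_j|\le 2$, so that the total is smaller than $N^{ds-K}$ by at least one factor of $N$.

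For the Fourier-analytic inputs, the complete $L^2\rightarrow L^p$ restriction estimate
\[ \|F_a^{(\bfP)}\|_p^p \lesssim N^{dp/2-K}\|a\|_2^p \]
is known in the full supercritical range $p>p_d:=2(d+2)/d$, by the Bourgain--Demeter decoupling theorem quoted in the introduction. By the hypotheses, $s>p_d$ in each of the three cases ($p_1=6<7$, $p_2=4<5$, and $p_d\le 10/3<4$ for $d\ge 3$), so I can pick real numbers $p_d<s''<s'<s$. The truncated estimate~\eqref{eq:intro:RestrTruncAgain} at exponent $s'$ is an immediate consequence of the full $L^2\rightarrow L^{s'}$ bound (taking $\theta>0$ arbitrary), and the $L^\infty\rightarrow L^{s''}$ estimate~\eqref{eq:intro:RestrEpsFullLinfty} follows from the full $L^2\rightarrow L^{s''}$ bound via the trivial inequality $\|a\|_2\le N^{d/2}\|a\|_\infty$, with room to spare on the $\eps$ loss.

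The main obstacle, in my view, is the careful verification of the strict bound on $\calN_Z$ with some positive $\omega$: one has to make sure no sub-family of $Z$ yields a count comparable to $N^{ds-K}$, which means handling all partitions in the subset-sum definition together with all coincidence patterns $\bfx_i=\bfx_j$, and confirming that in the marginal case $s=4$, $d\ge3$ the diagonal contribution is genuinely of lower order than $N^{ds-K}=N^{4d-d-2}=N^{3d-2}$. Once these bookkeeping points are resolved, all hypotheses of Theorem~\ref{thm:intro:SystTslInv} are met and the logarithmic-density conclusion follows directly.
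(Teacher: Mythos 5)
Your overall frame matches the paper's: apply Theorem~\ref{thm:intro:SystTslInv} to $\bfP=(x_1,\dots,x_d,x_1^2+\dotsb+x_d^2)$ with $Z$ the union of subset-sum solutions and coincidence solutions, and feed in the Bourgain--Demeter $L^2\to L^p$ estimate, which indeed yields~\eqref{eq:intro:RestrEpsFullLinfty} and~\eqref{eq:intro:RestrTruncAgain} for any $2+\tfrac4d<s''<s'<s$; that part is fine. The genuine gap is in your verification of the upper bound in~\eqref{eq:intro:NumberSolsBounds}, where your elementary counting does not work. For tuples with $\bfx_i=\bfx_j$, the trivial count of $(s-1)$-tuples is $N^{d(s-1)}=N^{ds-d}$, which is \emph{larger} than the main term $N^{ds-(d+2)}$, so the displayed inequality $O(N^{d(s-1)})\le N^{ds-K-2}$ is false and the step collapses: you must use the equations, and in fact the weighted information is only accessible through moments. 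Likewise your per-block bound $O(N^{d|E_j|-K})$ for subset-sum blocks of size at least $3$ fails: already for $d=1$ a block of size $3$ (coefficient sum zero) contains the $N$ diagonal solutions, not $O(N^{3-K})=O(1)$, and a block of size $4$ with coefficients $(1,1,-1,-1)$ contributes $\asymp N^2$, not $O(N)$. The missing idea, which is how the paper proceeds, is to bound these degenerate counts by moments of the unweighted exponential sum and interpolate: the coincidence count is at most $\|F\|_t^t$ with $t=s-1$ or $s-2$, and interpolating between the trivial $\|F\|_2^2=N^d$ and the $\eps$-full bound~\eqref{eq:parab:FMomentEpsFull} gives $N^{ds-(d+2)-c}$ precisely because $s>2+\tfrac4d$ (Proposition~\ref{thm:parab:TrivSols}); the subset-sum solutions are handled for all partitions at once by H\"older plus the same $L^2$/$L^s$ interpolation (Lemma~\ref{thm:large:TrivSols}, again needing only $s>2K/d=2+\tfrac4d$).

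A secondary issue is the lower bound. Theorem~\ref{thm:intro:SystParab} assumes no local solvability, so your appeal to the circle-method asymptotic obliges you to \emph{derive} nonsingular real and $p$-adic solutions from the sign hypothesis; ``Hensel'' alone does not produce $p$-adic points, and the substance is that after substituting the linear equations the quadratic one becomes a form of rank $d(s-2)\ge 5$, indefinite over $\R$ by the two-positive/two-negative hypothesis and isotropic over every $\Q_p$ by the rank condition. The paper avoids this singular-series positivity analysis entirely (Proposition~\ref{thm:parab:LowerBound}): it performs exactly that substitution, diagonalizes the resulting block quadratic form, and quotes classical results on indefinite forms in at least five variables to produce $\gtrsim N^{d(s-1)-2}=N^{ds-(d+2)}$ solutions directly; this is also where the thresholds $s\ge\max(4,2+\tfrac5d)$, i.e.\ the stated ranges in $d$, come from. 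Your route can be made to work, but only after supplying this rank-and-sign analysis, so it cannot be dismissed as standard bookkeeping.
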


This result takes as input the aforementioned Strichartz estimates
of Bourgain and Demeter~\cite{BD:DecouplConj}
to verify the assumptions~\eqref{eq:intro:RestrEpsFullLinfty} 
and~\eqref{eq:intro:RestrTruncAgain}
of Theorem~\ref{thm:intro:SystTslInv}, 
while a lower bound for the number of solutions to~\eqref{eq:intro:SystParab}
can be obtained by reducing the system to a quadratic form
of rank at least five.
For dimensions $d \not\in \{ 3,4 \}$,
or for $d \in \{3,4\}$ and $s \geq 5$ variables,
earlier estimates of Bourgain~\cite{Bourgain:ParabI} are in fact
sufficient for our analysis.

Another use of restriction estimates
for the parabola that we wish to highlight
is to obtain an asymptotic formula for
the number of solutions to~\eqref{eq:intro:SystParab}
in a box $[N]^d$, under local solvability assumptions.

\begin{theorem}
\label{thm:intro:SystParabAsympt}
Let $d,s \geq 1$ and $\lambda_1, \dots, \lambda_s \in \Z \smallsetminus \{0\}$.
Suppose that the system of equations~\eqref{eq:intro:SystParab}
has a nonsingular real solution in $(0,+\infty)^{ds}$
and nonsingular $p$-adic solutions for every prime $p$.
Let $\calN(N,d,\bflambda)$ denote the number of solutions 
to~\eqref{eq:intro:SystParab} in $[N]^d$.
For $s > 2 + \frac{4}{d}$, we have
\begin{align*}
	\calN(N,d,\bflambda) \sim \frakS \cdot \frakJ \cdot N^{ds-(d+2) }
\end{align*}
as $N \rightarrow \infty$, where $\frakS$, $\frakJ > 0$.
\end{theorem}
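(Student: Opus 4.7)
The plan is to apply the Hardy--Littlewood circle method, starting from the identity
\begin{align*}
\calN(N,d,\bflambda) = \int_{\T^{d+1}} \prod_{i=1}^s F(\lambda_i \bfalpha) \, \dbfalpha,
\qquad F(\bftheta,\beta) = \sum_{\bfn \in [N]^d} e(\bftheta \cdot \bfn + \beta|\bfn|^2),
\end{align*}
where $\bfalpha = (\bftheta,\beta) \in \T^d \times \T$. A convenient feature of the parabola is the factorization $F(\bftheta,\beta) = \prod_{j=1}^d f(\theta_j,\beta)$ with $f(\theta,\beta) = \sum_{n \in [N]} e(\theta n + \beta n^2)$, which reduces pointwise control of $F$ to the one-dimensional quadratic Weyl setting. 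I dissect $\T^{d+1}$ at the natural parabolic scale: fix a small $\eps_0 > 0$, set $Q = N^{\eps_0}$, and let
\begin{align*}
\frakM = \bigcup_{\substack{1 \leq q \leq Q \\ (\bfa,b) \in [0,q)^{d+1},\ (\bfa,b,q) = 1}} \bigl\{ (\bftheta,\beta) : \|\bftheta - \bfa/q\|_\infty \leq Q/N,\ |\beta - b/q| \leq Q/N^2 \bigr\},
\end{align*}
and $\frakm = \T^{d+1} \setminus \frakM$.

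For the minor arc bound, the standard Weyl inequality applied to each factor $f(\theta_j,\beta)$ yields $|F(\bfalpha)| \lesssim N^{d-\eta}$ on $\frakm$ for some $\eta = \eta(\eps_0) > 0$; a brief diophantine argument shows that, after enlarging the denominator cutoff by $\max_i|\lambda_i|$, we have $|F(\lambda_i\bfalpha)| \lesssim N^{d-\eta}$ uniformly in $i$ on the modified minor arcs. Since $s > 2 + 4/d = 2K/d$ with $K = d+2$, I may pick an exponent $p$ with $2K/d < p < s$, and the decoupling theory of Bourgain--Demeter~\cite{BD:DecouplConj} then provides the supercritical $\eps$-free restriction estimate $\int_{\T^{d+1}} |F|^p \, \dm \lesssim N^{dp - K}$. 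Combining these via the AM--GM inequality $\prod_i a_i \leq \tfrac{1}{s}\sum_i a_i^s$ and the measure-preserving change of variables $\bfbeta = \lambda_i \bfalpha$ on $\T^{d+1}$ yields
\begin{align*}
\int_\frakm \prod_{i=1}^s |F(\lambda_i\bfalpha)| \, \dbfalpha
\leq \max_{i} \int_\frakm |F(\lambda_i\bfalpha)|^s \, \dbfalpha
\lesssim N^{(d-\eta)(s-p)} \int_{\T^{d+1}} |F|^p \, \dm
\lesssim N^{ds - K - \eta(s-p)},
\end{align*}
which is the desired power saving over the conjectural main term.

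The major arc analysis is classical. On each $\frakM(q,\bfa,b)$, I approximate $F(\lambda_i\bfalpha)$ by $q^{-(d+1)} S(q,\lambda_i \bfa,\lambda_i b) v_N(\lambda_i(\bfalpha - (\bfa,b)/q))$, where $S(q,\bfa,b) = \sum_{\bfx \in (\Z/q\Z)^d} e_q(\bfa\cdot\bfx + b|\bfx|^2)$ and $v_N$ is the associated continuous oscillatory integral over $[0,N]^d$; the error is controlled by the smoothness of $F$ at the parabolic scales $N$ and $N^2$. Summing and rescaling then gives
\begin{align*}
\int_\frakM \prod_{i=1}^s F(\lambda_i\bfalpha)\,\dbfalpha = \frakS_Q \cdot \frakJ_Q \cdot N^{ds-(d+2)} + o(N^{ds-(d+2)}).
\end{align*}
The local sum $S$ splits into quadratic Gauss sums with $|S(q,\bfa,b)| \leq q^{d/2}$ when $\gcd(b,q) = 1$, so the Euler product for $\frakS$ converges in the comfortable range $s > 2$ and $\frakS_Q \to \frakS$, $\frakJ_Q \to \frakJ$ as $Q \to \infty$. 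The nonsingular $p$-adic (resp.\ real) solvability hypothesis identifies each local factor $\sigma_p$ (resp.\ $\sigma_\infty$) with a genuine density of solutions near a nonsingular one, hence each is strictly positive, giving $\frakS,\frakJ > 0$ and the desired asymptotic.

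The main obstacle is executing the minor arc estimate in the sharp range $s > 2K/d$: one must ensure that the exponent $p$ can be taken strictly between $2K/d$ and $s$ so that the savings $\eta(s - p)$ is strictly positive, and that the pointwise Weyl bound $|F(\lambda_i\bfalpha)| \lesssim N^{d-\eta}$ survives simultaneously for all $i \in [s]$ on a common minor arc region, which is arranged by inflating the denominator cutoff by $\max_i|\lambda_i|$. With this in place, the remainder of the argument follows the classical Hardy--Littlewood template, with the Bourgain--Demeter decoupling estimate playing the role classically filled by Hua-type mean value theorems for the parabola.
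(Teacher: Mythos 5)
Your overall route is the same as the paper's: write $\calN$ as $\int_{\T^{d+1}}\prod_i F(\lambda_i\bfalpha)\,\dbfalpha$, exploit the factorization of $F$ into one-dimensional quadratic Weyl sums, dispose of the minor arcs by combining a pointwise Weyl saving with a supercritical mean value supplied by Bourgain--Demeter (the paper runs this through a H\"older/Young splitting at the critical exponent $p_d=2+\tfrac4d$, you run it through an $\eps$-free estimate at a fixed supercritical $p$; both work), and expand the major arcs into a truncated singular series and integral. The $\lambda_i$-dilation issue on the arcs is handled the same way in both arguments.

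The genuine flaw is in your treatment of the singular series. First, the summation condition on the arcs is $(\bfa,b,q)=1$, not $\gcd(b,q)=1$, so the square-root bound $|S(q,\bfa,b)|\leq q^{d/2}$ is not available uniformly: when $h=\gcd(\lambda_i b,q)>1$ the complete sum either vanishes or is of size about $h^{d/2}q^{d/2}$, and one must check that on its support the coprimality condition forces $h\mid\lambda_i$, hence $h=O(1)$ (this is exactly the content of the paper's Gauss-sum lemma in the appendix and of its Proposition on local moments). Second, and more seriously, even granting full square-root cancellation in every factor, the convergence range is not $s>2$: the sum runs over the $(d+1)$-tuple $(\bfa,b)$ of residues mod $q$, so the normalized estimate gives $\sum_q q^{d+1}\,q^{-ds/2}$, which converges precisely for $s>2+\tfrac4d$. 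Thus there is no ``comfortable'' slack --- the convergence of $\frakS$ (and, similarly, $\frakJ$, which needs $s>2+\tfrac2d$) sits exactly at the hypothesis of the theorem, and your justification as written would fail; fortunately the correct count still lands inside the stated range, so the argument repairs without changing the hypotheses. A cosmetic point in the same part of the argument: the major-arc approximant should carry the normalization $q^{-d}S(q,\lambda_i\bfa,\lambda_i b)$ (the complete sum has $q^d$ terms), not $q^{-(d+1)}$, and this normalization is what makes the rescaled main term come out as $\frakS\cdot\frakJ\cdot N^{ds-(d+2)}$.
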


The factors $\frakS$ and $\frakJ$ are defined in~\eqref{eq:parab:SgSeries}
and~\eqref{eq:parab:SgIntg} below (with $T=\infty$),
and through further analysis they be given the
traditional interpretation in terms
of products of local densities associated
to the system of equations~\eqref{eq:intro:SystParab},
though we do not provide the details here.
When counting solutions to~\eqref{eq:intro:SystParab} in $[-N,N]^d \cap \Z^d$
instead, one needs only assume the existence of a nonzero real solution
to~\eqref{eq:intro:SystParab}, as we explain in Section~\ref{sec:parab}.
The approach by reduction to a quadratic form
is also likely to produce an asymptotic formula,
but it is not clear that one would recover the 
same expression for local densities.



We close this already lengthy introduction by 
discussing certain limitations of the previous results.
First, an annoying feature of Theorem~\ref{thm:intro:SystTslInv} is the dependency
of the logarithm exponent on the coefficients $(\lambda_i)$
and the system of polynomials $\bfP$.
This is a seemingly irreducible feature of the 
Heath-Brown-Szemerédi argument~\cite{HB:Roth,Szemeredi:Roth} 
which is not present in other methods such as Roth's~\cite{Roth:Roth}.
Secondly, our approach does not yield 
the expected density of solutions $c(\delta) N^{ds - K}$
to the equations~\eqref{eq:intro:SystPols} 
in a subset of density $\delta$ of a box $[N]^d$,
and it would be very desirable to find a density increment
strategy that addresses this shortcoming\footnote{
This question was raised to the author by \'{A}kos Magyar, whom we thank here.}.
For systems given by one quadratic form which is in a sense
far from being diagonal (that is, with large off-rank),
Keil~\cite{Keil:QuadFormsI,Keil:QuadFormsII} has devised such a strategy,
which relies on finding a uniform majorant of weighted exponential sums 
by Weyl differencing.
However, it seems difficult to obtain such bounds in the diagonal situation,
where the weights are not easily eliminated,
and we anticipate that a set of techniques 
involving Bohr sets might be required instead.

\textbf{Remark.}
A prior version of this article was publicized before the
announcement of Bourgain, Demeter and Guth~\cite{BDG:VinoMeanValue}.
This new version records the consequences of this new development
for some of our estimates.

\textbf{Acknowledgements.}
We thank Lilian Matthiesen for an interesting remark
which inspired Proposition~\ref{thm:large:SystPolsSym}.
We thank Trevor Wooley for communicating us
an advanced copy of his forthcoming manuscript~\cite{Wooley:Restr}.
This work was supported by NSERC Discorery grants 22R80520
and 22R82900.

\section{Notation}
\label{sec:notat}

For $x \in \R$ and $q \in \N$,
we write $e(x) = e^{2i\pi x}$ and $e_q(x)=e(\frac{x}{q})$.
For functions $f : \T^d \rightarrow \C$
and $g : \Z^d \rightarrow \C$, we define
$\wh{f}( \bfk ) = \int_{\T^d} f( \bfalpha ) e( - \bfk \cdot \bfalpha ) \dbfalpha$
and $\wh{g}( \bfalpha ) = \sum_{\bfn \in \Z^d} g(\bfn) e( \bfalpha \cdot \bfn ) $.
For a function $f$ defined on abelian group $G$
and $x,t \in G$, we let $\tau_t f(x) = f(x+t)$.

When $k \geq 1$, $\bfa \in \Z^k$ and $q \in \N$, we write
$(\bfa,q) = \gcd (a_1,\dots,a_k,q)$, and we let 
$q | \bfa$ denote the fact that $q | a_1,\dots, q | a_k$.
For $q \geq 2$ we occasionally use $\Z_q$ as a shorthand for the group $\Z/q\Z$.
We write $\| x \|$ or sometimes $\| x \|_\T$ 
for the distance of a real $x$ to $\Z$.

We let $\dm$ denote the Lebesgue measure on $\R^d$, or on 
$\T^d$ identified with any cube of the form $[-\theta,1-\theta)^d$,
and we let $\dSigma$ denote the counting measure on $\Z^d$.

When $\Omega$ is a finite set and $f : \Omega \rightarrow \C$ is a function,
we write $\E_\Omega f = \E_{x \in \Omega} f(x) = |\Omega|^{-1} \sum_{x \in \Omega} f(x)$.
When $\calP$ is a property, we let $1_{\calP}$ or $1[ \calP ]$ denote the boolean
which equals $1$ when $\calP$ is true, and $0$ otherwise.
When $n$ is an integer we write $[n] = \{1,\dots,n\}$,
and we let $\N_0 = \N \cup \{0\}$.
We let $A \bigsqcup B$ denote the disjoint union of sets $A$ and $B$.

\section{Additive equations in dense variables}
\label{sec:add}

In this section, we prove Theorem~\ref{thm:intro:SystTslInv}.
We employ the arithmetic energy-increment method from our previous work~\cite{me:logkeil},
with several simplifications to make the high-dimensional framework more bearable, 
and with a more significant modification to use truncated restriction estimates.

We start by introducing the relevant objects.
We fix a system of $r$ homogeneous polynomials 
$\bfP = (P_1,\dots,P_r)$, where each $P_i \in \Z[x_1,\dots,x_d]$
has degree $k_i \geq 1$, and we recall that
$k = \max_{1 \leq i \leq r} k_i$ is the degree of $\bfP$
and $K = k_1 + \dotsb + k_r$ is its weight.
We also fix coefficients $\lambda_1,\dots,\lambda_s \in \Z \smallsetminus \{0\}$
such that $\lambda_1 + \dotsb + \lambda_s = 0$.
We fix an integer $N \geq 2$ and we study the system of equations
\begin{align}
\label{eq:add:SystPols}
	\lambda_1 \bfP(\bfn_1) + \dotsb + \lambda_s \bfP(\bfn_s) = 0
\end{align}
in variables $\bfn_1,\dots,\bfn_d \in [N]^d$.
We also fix a translation-dilation invariant subset $Z$ of $(\Q^d)^s$,
to be thought of as a set of trivial solutions to~\eqref{eq:add:SystPols},
and we define the quantities $\calN(N,\bfP,\bflambda)$ and $\calN_Z(N,\bfP,\bflambda)$ as 
in~\eqref{eq:intro:NDef} and~\eqref{eq:intro:NZDef}.
From now on, we place ourselves under the assumptions of Theorem~\ref{thm:intro:SystTslInv},
which in particular imply that $N$ can be taken larger than any fixed
constant depending on $\bfP$ and $\bflambda$.
Unless otherwise specified, all explicit and implicit constants 
throughout the section may depend on $\bfP$ and $\bflambda$.

Next, we fix a prime number $M \sim D N$, where $D = D(\bfP,\bflambda) > 0$
is chosen large enough so that $(M,\lambda_i) = 1$ for all $i$ and
so that, for $\bfn_1,\dots,\bfn_s \in [N]^d$, 
the system of equations~\eqref{eq:add:SystPols} is equivalent to 
\begin{align}
\label{eq:add:SystPolsModulo}
	&\phantom{(1 \leq k_j \leq r)}	&
	\lambda_1 P_j(\bfn_1) + \dotsb + \lambda_s P_j(\bfn_s) &\equiv 0 \bmod M^{k_j}
	&&(1 \leq j \leq r).
\end{align}
Accordingly we define $\ZM = \prod_{j=1}^r \Z/M^{k_j}\Z$ ; note that $|\ZM| = M^K \asymp N^K$.
When $f : \Z^d \rightarrow \C$ is a function, 
we also define $F_f : \T^r \rightarrow \C$ 
and $H_f : \Z_{\overline{\bfM}} \rightarrow \C$ by
\begin{align}
\label{eq:add:FfHfDef}
	F_f( \bfalpha ) = \sum_{\bfn \in [N]^d}\,f(\bfn) e\bigg( \sum_{j=1}^r \alpha_j P_j(\bfn) \bigg),
	\qquad
	H_f( \bfxi ) = \E_{\bfn \in [N]^d}\,f(\bfn) e\bigg( \sum_{j=1}^r \frac{\xi_j P_j(\bfn)}{M^{k_j}} \bigg),
\end{align}
so that $H_f(\bfxi) = N^{-d} F_f(\xi_1/M^{k_1},\dots,\xi_r/M^{k_r})$
and $F_f = F_f^{(\bfP)}$ in the notation of the introduction.
We write respectively $F$ and $H$ for the 
unweighted versions of $F_f$ and $H_f$ where one takes $f \equiv 1$.
For $p > 0$, we define the $\ell^p$ norm of a function $G : \ZM \rightarrow \C$ by
$\| G \|_p = ( \sum_{\bfxi \in \ZM} |G(\bfxi)|^p )^{1/p}$.

Next, we define the multilinear operator
$T$ acting on functions $f_i : \Z^d \rightarrow \C$ by
\begin{align}
\label{eq:add:TDef}
	T(f_1,\dots,f_s) 
	= \frac{D^K}{N^{ds - K}}
	\sum_{\bfn_1,\dots,\bfn_s \in [N]^d} f_1(\bfn_1) \cdots	f_s(\bfn_s) 
	1\bigg[\, \sum_{i=1}^s \lambda_i \bfP(\bfn_i) = 0 \,\bigg].
\end{align} 
The normalizing constant $D$ is unimportant
and will be eventually absorbed in big $O$ notation.
Note that $T(1_{[N]^d},\dots,1_{[N]^d}) = D^K N^{-(ds - K)} \calN(N,\bfP,\bflambda)$.
As mentioned in the introduction, a fact of key importance to us
is that the operator $T$ is controlled by $s$-th moments of 
the exponential sums $H_f$.

\begin{proposition}
For functions $f_1,\dots,f_s : \Z^d \rightarrow \C$, we have
\begin{align}
	\label{eq:add:HolderOnT}
	|T(f_1,\dots,f_s)| \leq \| H_{f_1} \|_s \dotsb \| H_{f_s} \|_s.
\end{align}
\end{proposition}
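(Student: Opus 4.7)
The plan is to exploit orthogonality on $\ZM$ to detect the system of equations~\eqref{eq:add:SystPols} and reduce the claim to a multilinear Hölder inequality. First, by the choice of $M$, the integer equation $\sum_{i=1}^s \lambda_i \bfP(\bfn_i) = 0$ for $\bfn_i \in [N]^d$ is equivalent to the congruence system~\eqref{eq:add:SystPolsModulo}, so standard orthogonality on each factor $\Z/M^{k_j}\Z$ gives
\begin{align*}
\mathbf{1}\bigg[ \sum_{i=1}^s \lambda_i \bfP(\bfn_i) = 0 \bigg] = \frac{1}{M^K} \sum_{\bfxi \in \ZM} e\bigg( \sum_{j=1}^r \frac{\xi_j \sum_{i=1}^s \lambda_i P_j(\bfn_i)}{M^{k_j}} \bigg).
\end{align*}

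Next, I would substitute this identity into the definition~\eqref{eq:add:TDef} of $T$, and observe that the resulting phase factorizes across the variables $\bfn_1,\dots,\bfn_s$. Interchanging the sum over $\bfxi$ with the sums over the $\bfn_i$ and recognizing each inner sum as $N^d H_{f_i}(\lambda_i \bfxi)$, where $\lambda_i \bfxi$ is understood componentwise modulo $M^{k_j}$, collapses the normalization $D^K N^{ds} / (N^{ds - K} M^K) = (DN/M)^K$ to yield the clean identity
\begin{align*}
T(f_1,\dots,f_s) = \bigg( \frac{DN}{M} \bigg)^K \sum_{\bfxi \in \ZM} \prod_{i=1}^s H_{f_i}(\lambda_i \bfxi).
\end{align*}
By choosing the prime $M$ to lie in an interval $[DN, 2DN]$, which Bertrand's postulate guarantees for $D$ sufficiently large, the prefactor $(DN/M)^K$ is at most~$1$.

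The remaining sum is handled by the standard generalized Hölder inequality on $\ZM$:
\begin{align*}
\bigg| \sum_{\bfxi \in \ZM} \prod_{i=1}^s H_{f_i}(\lambda_i \bfxi) \bigg| \leq \prod_{i=1}^s \bigg( \sum_{\bfxi \in \ZM} |H_{f_i}(\lambda_i \bfxi)|^s \bigg)^{1/s}.
\end{align*}
Finally, since $M$ is prime and $(M, \lambda_i) = 1$ by construction, the map $\bfxi \mapsto \lambda_i \bfxi$ acts as a bijection on each $\Z/M^{k_j}\Z$ and hence on $\ZM$, so each factor on the right equals $\|H_{f_i}\|_s$, producing the claimed estimate.

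There is no real obstacle here; this is a routine orthogonality-plus-Hölder manipulation that formalizes the intuition that $T$ is, up to an absolute constant, an $L^s$-pairing of the exponential sums $H_{f_i}$ sampled along the dilated copies $\lambda_i \ZM$. The only points requiring a little care are the tracking of the normalizing constants and the use of both primality of $M$ and its coprimality with each $\lambda_i$ to ensure the dilation $\bfxi \mapsto \lambda_i \bfxi$ is a genuine bijection on $\ZM$.
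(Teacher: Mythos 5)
Your proposal is correct and follows essentially the same route as the paper: orthogonality on $\ZM$ to detect the congruence system~\eqref{eq:add:SystPolsModulo}, interchange of summation to write $T$ as $\sum_{\bfxi \in \ZM} \prod_i H_{f_i}(\lambda_i \bfxi)$ up to the harmless normalizing factor, Hölder, and then the substitution $\bfxi \mapsto \lambda_i \bfxi$ using $(M,\lambda_i)=1$ to identify $\| H_{f_i}(\lambda_i\,\cdot)\|_s$ with $\| H_{f_i}\|_s$. Your explicit tracking of the prefactor $(DN/M)^K \leq 1$ via the choice $M \in [DN,2DN]$ is a slightly more careful rendering of what the paper compresses into the word ``renormalizing,'' but it is not a different argument.
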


\begin{proof}
For convenience we define the bilinear form
$\langle \bfx , \bfy \rangle = \sum_{j=1}^r x_j y_j M^{-k_j}$ on $\ZM$.
By equivalence of~\eqref{eq:add:SystPols} and~\eqref{eq:add:SystPolsModulo}
for $\bfn_i \in [N]^d$ and by orthogonality, we have
\begin{align*}
	T(f_1,\dots,f_s)
	&= \frac{D^K}{N^{ds - K}}
	\sum_{\bfn_1,\dots,\bfn_d \in [N]^d} f_1(\bfn_1) \cdots	f_s(\bfn_s)
	\frac{1}{M^K} \sum_{\bfxi \in \ZM} e( \langle \bfxi , \lambda_1 \bfP( \bfn_1 ) + \dotsb + \lambda_s \bfP( \bfn_s ) \rangle )
\end{align*}
Interchanging summations, and renormalizing, we obtain
\begin{align*}
	T(f_1,\dots,f_s)
	&= \sum_{\bfxi \in \ZM} \E_{\bfn_1,\dots,\bfn_s \in [N]^d} 
	f_1(\bfn_1) e( \langle \lambda_1 \bfxi , \bfP( \bfn_1 ) \rangle )
	\cdots f_s( \bfn_s ) e(  \langle \lambda_s \bfxi , \bfP( \bfn_s ) \rangle ) \\
	&= \sum_{\bfxi \in \ZM}
	H_{f_1}( \lambda_1 \bfxi ) \dotsb H_{f_s}( \lambda_s \bfxi ).
\end{align*}
By Hölder's inequality, we deduce that
\begin{align*}
	|T( f_1 , \dots, f_s )| \leq \prod_{i=1}^s \| H_{f_i}( \lambda_i \,\cdot ) \|_s.
\end{align*}
For every $i \in [s]$, we have $\| H_{f_i}( \lambda_i \,\cdot ) \|_s = \| H_{f_i} \|_s$,
since the $M^{k_j}$, $j \in [r]$ are all coprime to $\lambda_i$,
and this concludes the proof.
\end{proof}

The exponential sums $H_f$, being discretized versions of $F_f$,
behave exactly the same insofar as moments are concerned.

\begin{lemma}
\label{thm:add:HBoundByF}
Uniformly for functions 
$f : [N]^d \rightarrow \C$, 
we have, for every $p \geq 1$,
\begin{align*}
	\| H_f \|_p^p
	\lesssim_p N^{K-dp} \| F_f \|_p^p.
\end{align*}
\end{lemma}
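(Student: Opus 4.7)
The plan is to recognize $H_f$ as a rescaled sampling of $F_f$ on a fine lattice of $\T^r$ and invoke a Marcinkiewicz--Zygmund-type sampling inequality. From the definitions~\eqref{eq:add:FfHfDef} we have $H_f(\bfxi) = N^{-d} F_f(\xi_1/M^{k_1},\ldots,\xi_r/M^{k_r})$, so since $|\ZM| = M^K \asymp N^K$ the lemma reduces to the sampling bound
\[
\sum_{\bfxi \in \ZM} \bigl| F_f(\xi_1/M^{k_1},\ldots,\xi_r/M^{k_r}) \bigr|^p \lesssim_p N^K \| F_f \|_p^p.
\]
The feature making this plausible is that $F_f$ is a trigonometric polynomial on $\T^r$ with spectrum contained in a box $\prod_j [-C_j N^{k_j}, C_j N^{k_j}]$ (by the homogeneity of the $P_j$), while the sampling lattice has spacing $1/M^{k_j} \asymp 1/N^{k_j}$ in the $j$-th coordinate, essentially at the Nyquist rate.

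To implement this, I would introduce a tensor product reproducing kernel $V = V_1 \otimes \cdots \otimes V_r$ on $\T^r$ satisfying $\wh V \equiv 1$ on the spectrum of $F_f$, with each $V_j$ a de~la~Vallée~Poussin-type kernel on $\T$ of degree $\asymp N^{k_j}$, so that $\|V_j\|_{L^1(\T)} = O(1)$ together with the pointwise decay $|V_j(t)| \lesssim N^{k_j}(1 + N^{k_j} \|t\|_\T)^{-2}$. The reproducing property gives $F_f = F_f * V$, and Jensen's inequality against the probability measure $|V|/\|V\|_1$ yields $|F_f(\bfalpha)|^p \lesssim_p (|F_f|^p * |V|)(\bfalpha)$ pointwise on $\T^r$. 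Summing this estimate over the lattice and interchanging the sum with the integral reduces matters to the uniform discrete estimate
\[
\sum_{\bfxi \in \ZM} |V(\xi_1/M^{k_1} - \beta_1,\ldots,\xi_r/M^{k_r} - \beta_r)| \lesssim M^K \qquad (\bfbeta \in \T^r),
\]
after which the lemma follows upon dividing by $N^{dp}$.

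The only step requiring genuine work is this last uniform bound, which factorizes coordinate-wise into one-dimensional sums $\sum_{\xi_j \in \Z/M^{k_j}\Z} |V_j(\xi_j/M^{k_j} - \beta_j)|$. I would handle each such sum by splitting between the contribution from indices $\xi_j$ with $\|\xi_j/M^{k_j} - \beta_j\|_\T \lesssim 1/N^{k_j}$ (at most $O(M^{k_j}/N^{k_j})$ of them, each contributing $O(N^{k_j})$) and the complementary tail, controlled by the quadratic decay of $V_j$; both produce $O(M^{k_j})$, and taking the product over $j$ recovers $O(M^K)$. The main obstacle is precisely this tail-summation argument together with the explicit construction of $V$ enjoying both reproducing and concentration properties: these are classical harmonic-analytic estimates, but must be executed carefully to respect the anisotropic product structure associated with the weights $(k_1,\ldots,k_r)$.
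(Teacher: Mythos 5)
Your proof is correct and follows essentially the same route as the paper: there too, $\|H_f\|_p^p$ is rewritten as $N^{-dp}$ times the sum of $|F_f|^p$ sampled over the anisotropic lattice $\prod_j M^{-k_j}\Z/\Z$, and the resulting Marcinkiewicz--Zygmund-type bound by $N^K\|F_f\|_p^p$ is then quoted directly from \cite[Proposition~6.1]{me:logkeil} rather than proved. Your de~la~Vallée~Poussin kernel argument (spectrum of $F_f$ in $\prod_j[-C_jN^{k_j},C_jN^{k_j}]$, the reproducing identity $F_f = F_f \ast V$, Jensen with $\|V\|_1=O(1)$, and the uniform lattice sum $\sum_{\bfxi \in \ZM}|V(\cdot-\bfbeta)| \lesssim M^K$) is the standard proof of that quoted sampling inequality, so you have simply opened the black box the paper cites.
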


\begin{proof}
Define $g : \Z^r \rightarrow \C$ by
\begin{align*}
	g( \bfm ) = \sum_{ \bfn \in [N]^d \,:\, \bfP(\bfn) = \bfm } f( \bfn ),
\end{align*}
so that $F_f = \wh{g}$ by~\eqref{eq:add:FfHfDef}.
By~\cite[Proposition~6.1]{me:logkeil}, we have therefore
\begin{align*}
	\| H_f \|_p^p
	&= N^{-dp} \sum_{\xi_1 \in \Z/M^{k_1}\Z}
	\dots \sum_{\xi_r \in \Z/M^{k_r}\Z}
	\Big| \wh{g} \Big( \frac{\xi_1}{M^{k_1}} , \dots , \frac{\xi_r}{M^{k_r}} \Big) \Big|^p
	\\
	&\lesssim_p N^{K-dp} \int_{\T^r}
	| \wh{g}( \theta_1 , \dots , \theta_r ) |^p 
	\dtheta_1 \dots \dtheta_r
	\\
	&= N^{K-dp} \| F_f \|_p^p. \phantom{\int_\T}
\end{align*}
\end{proof}

We also need a technical lemma
to transform the assumptions of Theorem~\ref{thm:intro:SystTslInv}
into useful restriction estimates.
It is more natural at this point to
work with scaled averages, 
and thus for a function $f : [N]^d \rightarrow \C$ and $p > 0$
we define  $\| f \|_{L^p[N]} = (\E_{\bfn \in [N]^d} |f(\bfn)|^p)^{1/p}$.

\begin{lemma}
\label{thm:csq:RestAlmostCstFct}
Let $d,r \geq 1$, $\theta > 0$ and $0 < q < p$.
Suppose that $T : \ell^1(\Z^d) \rightarrow L^\infty(\T^r)$ is an operator 
such that, for every $\eps > 0$,
\begin{align}
	\label{eq:csq:RestrInfty}
	\int_{\T^r} |T f|^q \ \dm 
	\lesssim_\eps N^{dq - K + \eps} \| f \|_\infty^q, \\
	\label{eq:csq:RestrL2Trunc}
	\int_{|T f| \geq N^{d - \theta} \| f \|_{L^2[N]} } |T f|^p \ \dm
	\lesssim N^{dp - K} \| f \|_{L^2[N]}^p.
\end{align}
Then, uniformly for functions $f : [N]^d \rightarrow \C$, we have
\begin{align*}
	\| T f \|_p^p 
	\,\lesssim_{p,q,\theta} N^{dp - K} \| f \|_{L^2[N]}^{p-q} \| f \|_\infty^q.
\end{align*}
Furthermore, for $0 < \nu < (\tfrac{p}{q}-1) \theta$ we have,
uniformly for functions $f : [N]^d \rightarrow \C$
such that $\| f \|_\infty / \| f \|_{L^2[N]} \leq N^\nu$,
\begin{align*}
	\| T f \|_p^p 
	\,\lesssim_{p,q,\theta,\nu} N^{dp - K} \| f \|_{L^2[N]}^p.
\end{align*}
\end{lemma}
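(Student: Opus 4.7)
The plan is to dyadically split the integral $\|Tf\|_p^p = \int_{\T^r} |Tf|^p \, \dm$ at the threshold $N^{d-\theta}\|f\|_{L^2[N]}$ that already appears in the hypothesis~\eqref{eq:csq:RestrL2Trunc}. On the superlevel set $\{|Tf| \geq N^{d-\theta}\|f\|_{L^2[N]}\}$, hypothesis~\eqref{eq:csq:RestrL2Trunc} gives directly a contribution $\lesssim N^{dp-K}\|f\|_{L^2[N]}^p$, which is bounded by $N^{dp-K}\|f\|_{L^2[N]}^{p-q}\|f\|_\infty^q$ via the trivial inequality $\|f\|_{L^2[N]} \leq \|f\|_\infty$.

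On the complementary sublevel set $\{|Tf| < N^{d-\theta}\|f\|_{L^2[N]}\}$, I would factor $|Tf|^p = |Tf|^{p-q}|Tf|^q$, bound the first factor pointwise by $(N^{d-\theta}\|f\|_{L^2[N]})^{p-q}$, and integrate the second factor over the whole of $\T^r$ using~\eqref{eq:csq:RestrInfty}:
\begin{align*}
\int_{|Tf| < N^{d-\theta}\|f\|_{L^2[N]}} |Tf|^p \, \dm
&\leq (N^{d-\theta}\|f\|_{L^2[N]})^{p-q} \int_{\T^r} |Tf|^q \, \dm \\
&\lesssim_\eps N^{(d-\theta)(p-q) + dq - K + \eps} \|f\|_{L^2[N]}^{p-q} \|f\|_\infty^q.
\end{align*}
The $N$-exponent simplifies to $dp - K - \theta(p-q) + \eps$, and selecting $\eps < \theta(p-q)$ (say $\eps = \theta(p-q)/2$) produces the first claimed inequality, with an implied constant depending on $p, q, \theta$.

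For the second statement I would simply substitute the hypothesis $\|f\|_\infty \leq N^\nu\|f\|_{L^2[N]}$ into the sublevel-set estimate, which introduces an extra factor $N^{\nu q}$, yielding an $N$-exponent of $dp - K - \theta(p-q) + \nu q + \eps$. Under the assumption $\nu < (\tfrac{p}{q}-1)\theta$ we have $\theta(p-q) - \nu q > 0$, so a sufficiently small $\eps > 0$ makes this exponent at most $dp - K$. Combining with the superlevel bound (which is already of the required shape $N^{dp-K}\|f\|_{L^2[N]}^p$) finishes the argument, with an implied constant depending on $p, q, \theta, \nu$.

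The argument is essentially routine splitting; no step is really the obstacle, but one must carefully track the $\eps$-loss from~\eqref{eq:csq:RestrInfty} and ensure it is strictly smaller than the gain $\theta(p-q)$ — respectively $\theta(p-q) - \nu q$ in the second part — coming from the truncation threshold. This is exactly where the strict inequality $\nu < (\tfrac{p}{q}-1)\theta$ is needed.
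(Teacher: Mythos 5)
Your proof is correct and follows essentially the same route as the paper: split the moment at the truncation threshold $N^{d-\theta}\|f\|_{L^2[N]}$, handle the superlevel set with~\eqref{eq:csq:RestrL2Trunc} (using $\|f\|_{L^2[N]}\leq\|f\|_\infty$ for the first bound), and absorb the sublevel tail by combining the pointwise bound with~\eqref{eq:csq:RestrInfty}, taking $\eps$ smaller than the gain $\theta(p-q)$, respectively $\theta(p-q)-\nu q$.
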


\begin{proof}
Since $\| f \|_{L^2[N]} \leq \| f \|_\infty$
and we have the estimate~\eqref{eq:csq:RestrL2Trunc},
it suffices in both cases to bound the tail
\begin{align*}
	I = \int_{|T f| \leq N^{d - \theta} \| f \|_{L^2[N]} } |T f|^p  \ \dm.
\end{align*}
To obtain the first estimate, observe that by~\eqref{eq:csq:RestrInfty} we have
\begin{align*}
	I 
	&\leq N^{(p-q)(d-\theta)} \| f \|_{L^2[N]}^{p-q} \int_{\T^r} |T f|^q \ \dm
	\\
	&\lesssim_\eps N^{\eps - (p-q)\theta} N^{dp - K} \| f \|_{L^2[N]}^{p-q} \| f \|_\infty^q.
\end{align*}
For $\eps$ small enough, 
we obtain the first estimate.
To obtain the second estimate, 
note that when $\| f \|_\infty \leq N^\nu \| f \|_{L^2[N]}$, we have
\begin{align*}
	I \leq N^{\eps + q\nu - (p-q)\theta} N^{p - K} \| f \|_{L^2[N]}^p.
\end{align*}
For $\nu < (\tfrac{p}{q} - 1) \theta$ and $\eps$ small enough, 
we obtain the second estimate.
\end{proof}

Using the previous lemmas, 
we can translate these assumptions
into a simple $L^2 \rightarrow L^p$
estimate for the operator $f \mapsto H_f$ 
acting on functions of small $L^\infty$/$L^2$ ratio,
and into an inhomogeneous 
``mixed norms'' estimate for general functions.

\begin{proposition}
\label{thm:add:HBounds}
Uniformly for functions $f : [N]^d \rightarrow \C$, we have
\begin{align}
\label{eq:add:HBoundLinfty}
	\| H_f \|_p \lesssim \| f \|_{L^2[N]}^{1-(s''/s')} \| f \|_\infty^{s''/s'} \leq \| f \|_\infty
	\quad\text{for}\quad p \geq s'.
\end{align}
There exists a constant $\nu \in (0,1]$ 
depending at most on $s',s'',\theta$ such that,
uniformly for functions $f : [N]^d \rightarrow \C$
such that ${\| f \|_\infty \leq 1}$ and 
$\| f \|_{L^2[N]} \geq N^{-\nu}$, we have
\begin{align}
\label{eq:add:HBoundL2}
	\| H_f \|_p &\lesssim \| f \|_{L^2[N]}
	\quad\text{for}\quad p \geq s'.
\end{align}
\end{proposition}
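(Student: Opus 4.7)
The plan is to combine Lemma~\ref{thm:add:HBoundByF}, which transfers $\ell^p$-norms of $H_f$ to $L^p$-norms of $F_f$ with a precise power of $N$, with Lemma~\ref{thm:csq:RestAlmostCstFct} applied to the operator $T : f \mapsto F_f^{(\bfP)}$. The hypotheses \eqref{eq:intro:RestrEpsFullLinfty} and \eqref{eq:intro:RestrTruncAgain} of Theorem~\ref{thm:intro:SystTslInv} are precisely \eqref{eq:csq:RestrInfty} and \eqref{eq:csq:RestrL2Trunc} of that lemma, with $q = s''$ and $p = s'$, once one notes that $\| a \|_2 = N^{d/2} \| a \|_{L^2[N]}$ so that $N^{d/2 - \theta} \| a \|_2 = N^{d - \theta} \| a \|_{L^2[N]}$ and $N^{ds'/2 - K} \| a \|_2^{s'} = N^{ds' - K} \| a \|_{L^2[N]}^{s'}$.

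For the first bound \eqref{eq:add:HBoundLinfty}, I would first handle the case $p = s'$. The first conclusion of Lemma~\ref{thm:csq:RestAlmostCstFct} gives
\begin{align*}
    \| F_f \|_{s'}^{s'} \lesssim N^{ds' - K} \| f \|_{L^2[N]}^{s' - s''} \| f \|_\infty^{s''},
\end{align*}
and Lemma~\ref{thm:add:HBoundByF} with $p = s'$ yields $\| H_f \|_{s'}^{s'} \lesssim N^{K - ds'} \| F_f \|_{s'}^{s'}$. Combining and taking $s'$-th roots produces $\| H_f \|_{s'} \lesssim \| f \|_{L^2[N]}^{1 - s''/s'} \| f \|_\infty^{s''/s'}$. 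To extend to all $p \geq s'$, I would use the standard monotonicity of $\ell^p$-norms on the counting measure: since $\| H_f \|_\infty \leq \| H_f \|_{s'}$, the interpolation $\| H_f \|_p^p \leq \| H_f \|_\infty^{p - s'} \| H_f \|_{s'}^{s'} \leq \| H_f \|_{s'}^p$ gives $\| H_f \|_p \leq \| H_f \|_{s'}$ on $\ZM$. The final inequality $\| f \|_{L^2[N]}^{1 - s''/s'} \| f \|_\infty^{s''/s'} \leq \| f \|_\infty$ is immediate from $\| f \|_{L^2[N]} \leq \| f \|_\infty$ and $0 < s''/s' < 1$.

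For the second bound \eqref{eq:add:HBoundL2}, I would set $\nu = \min\bigl(1, \tfrac{1}{2}(s'/s'' - 1)\theta\bigr)$, which depends only on $s'$, $s''$, $\theta$ and lies strictly below $(s'/s'' - 1)\theta$. Under the hypotheses $\| f \|_\infty \leq 1$ and $\| f \|_{L^2[N]} \geq N^{-\nu}$, we get $\| f \|_\infty / \| f \|_{L^2[N]} \leq N^\nu$, which is exactly the assumption of the second conclusion of Lemma~\ref{thm:csq:RestAlmostCstFct}. That lemma then gives $\| F_f \|_{s'}^{s'} \lesssim N^{ds' - K} \| f \|_{L^2[N]}^{s'}$, and combining with Lemma~\ref{thm:add:HBoundByF} yields $\| H_f \|_{s'} \lesssim \| f \|_{L^2[N]}$. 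Monotonicity of $\ell^p$-norms on $\ZM$ again extends the bound to all $p \geq s'$.

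The argument is essentially bookkeeping; there is no serious obstacle, only the minor verification that the $\ell^2$-normalization used in the statement of Theorem~\ref{thm:intro:SystTslInv} matches the $L^2[N]$-normalization used in Lemma~\ref{thm:csq:RestAlmostCstFct}, and the choice of $\nu$ making the ratio condition of that lemma compatible with the assumption $\| f \|_{L^2[N]} \geq N^{-\nu}$.
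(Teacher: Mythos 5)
Your proposal is correct and follows essentially the same route as the paper: reduce to the endpoint $p = s'$ by monotonicity of $\ell^p(\ZM)$ norms, rewrite the hypotheses of Theorem~\ref{thm:intro:SystTslInv} in the $L^2[N]$ normalization, apply Lemma~\ref{thm:csq:RestAlmostCstFct} to $Tf = F_f$ with $(q,p) = (s'',s')$, and transfer back via Lemma~\ref{thm:add:HBoundByF}. Your choice $\nu = \min\bigl(1, \tfrac{1}{2}(s'/s''-1)\theta\bigr)$ is in fact the correct one demanded by Lemma~\ref{thm:csq:RestAlmostCstFct} (the paper's displayed value $\tfrac{1}{2}(\tfrac{s''}{s'}-1)\theta$ has the ratio inverted, which would be negative), and the explicit cap at $1$ ensures $\nu \in (0,1]$ as stated.
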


\begin{proof}
By reverse nesting of $\ell^p(\ZM)$ norms, it
suffices to prove both estimates 
at the endpoint $s'$.
We rewrite the assumptions~\eqref{eq:intro:RestrEpsFullLinfty} 
and~\eqref{eq:intro:RestrTruncAgain} as
\begin{align*}
	\int_{\T^r} |F_f|^{s''} \dm 
	&\lesssim_\eps N^{ds'' - K + \eps} \| f \|_\infty^{s''}, \\ 
	\int_{ |F_f| \geq N^{d - \theta} \| f \|_{L^2[N]} } |F_f|^{s'} \dm 
	&\lesssim N^{ds' - K} \| f \|_{L^2[N]}^{s'},
\end{align*}
where $0 < s'' < s' < s$ and $\theta > 0$.
The proof follows by applying Lemma~\ref{thm:csq:RestAlmostCstFct} 
to $Tf = F_f$ with $(q,p)=(s'',s')$ and $\nu = \tfrac{1}{2}(\tfrac{s''}{s'} - 1) \theta$,
and then invoking Lemma~\ref{thm:add:HBoundByF}.
\end{proof}

With the previous analytical tools in place,
we can carry out the first step of the usual density increment strategy,
which is to extract a large moment of the exponential sum $H_f$.
When $A$ is a subset of $[N]^d$ of density $\delta$, 
we write $f_A = 1_A - \delta 1_{[N]^d}$ for its balanced
indicator function, here and throughout the section.

\begin{proposition}
\label{thm:add:LargeMoment}
There exists a constant $c_0 > 0$ such that
the following holds.
If $A$ is a subset of $[N]^d$ of density $\delta$
such that $T(1_A,\dots,1_A) \leq c_0\, \delta^s$, then
\begin{align*}
	1 \lesssim \| H_{f_A / \delta} \|_s.
\end{align*}
\end{proposition}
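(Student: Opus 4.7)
The plan is to run the standard multilinear expansion/energy-extraction argument, which exploits the fact that the $L^\infty \to L^s$ estimate from Proposition~\ref{thm:add:HBounds} applied to the constant function produces a harmless constant, whereas the only way for an error term involving $f_A$ to exceed $\delta^{s-1}\|H_{f_A}\|_s$ in magnitude is forbidden by the hypothesis.

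First I would write $1_A = \delta\,1_{[N]^d} + f_A$ and expand the multilinear form $T(1_A,\dots,1_A)$ into $2^s$ terms $T(g_1,\dots,g_s)$ with each $g_i \in \{\delta\,1_{[N]^d}, f_A\}$. The distinguished ``main term" corresponds to all $g_i = \delta\,1_{[N]^d}$ and equals $\delta^s\, T(1_{[N]^d},\dots,1_{[N]^d}) = \delta^s D^K N^{-(ds-K)} \calN(N,\bfP,\bflambda)$, which by the first bound in~\eqref{eq:intro:NumberSolsBounds} is at least $c_1\, \delta^s$ for an absolute constant $c_1 > 0$.

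Next I would bound each remaining term by the inequality~\eqref{eq:add:HolderOnT}: for $k \geq 1$ occurrences of $f_A$ among $g_1,\dots,g_s$, we have
\begin{align*}
	|T(g_1,\dots,g_s)| \leq \|H_{f_A}\|_s^k\, \|H_{\delta\,1_{[N]^d}}\|_s^{\,s-k}
	\leq \|H_{f_A}\|_s^k\, \delta^{s-k},
\end{align*}
where the last step uses $\|H_{f}\|_s \lesssim \|f\|_\infty$ (the easy half of~\eqref{eq:add:HBoundLinfty}, valid since $s > s'$) applied to $f = 1_{[N]^d}$. Summing over $k = 1,\dots,s$ gives a bound of $(\delta + \|H_{f_A}\|_s)^s - \delta^s$ for the total error. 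Under the tentative assumption $\|H_{f_A}\|_s \leq \delta$, this total error is at most $s\, 2^{s-1}\, \delta^{s-1}\, \|H_{f_A}\|_s$.

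Combining the lower bound on the main term and the upper bound on the error with the hypothesis $T(1_A,\dots,1_A) \leq c_0\, \delta^s$, we obtain
\begin{align*}
	c_1\, \delta^s \leq c_0\, \delta^s + s\, 2^{s-1}\, \delta^{s-1}\, \|H_{f_A}\|_s,
\end{align*}
so choosing $c_0 = c_1/2$ yields $\|H_{f_A}\|_s \geq (c_1 / (s\, 2^s))\, \delta$, which is exactly $\|H_{f_A/\delta}\|_s \gtrsim 1$. The case $\|H_{f_A}\|_s \geq \delta$ gives the conclusion directly, so the proof is complete. There is no genuine obstacle here; the main thing to watch is that the uniform bound $\|H_{1_{[N]^d}}\|_s \lesssim 1$ only needs the cheap $L^\infty$ part of Proposition~\ref{thm:add:HBounds}, so no quantitative loss from the truncated restriction estimates enters at this stage.
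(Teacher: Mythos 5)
Your proof is correct and follows essentially the same route as the paper: expand $1_A = \delta 1_{[N]^d} + f_A$ multilinearly, lower-bound the main term via the first estimate in~\eqref{eq:intro:NumberSolsBounds}, and control the terms containing $f_A$ through the H\"older bound~\eqref{eq:add:HolderOnT} together with $\|H\|_s \lesssim 1$ from~\eqref{eq:add:HBoundLinfty}. The only (cosmetic) difference is that the paper pigeonholes onto a single dominant term $|T(f_1,\dots,f_s)| \gtrsim \delta^s$ with $\ell \geq 1$ copies of $f_A$ and rearranges, whereas you sum all $2^s-1$ error terms and split into the cases $\|H_{f_A}\|_s \leq \delta$ and $\|H_{f_A}\|_s \geq \delta$; just carry the implicit constant from $\|H\|_s \lesssim 1$ rather than writing $\delta^{s-k}$ exactly, which changes nothing in the conclusion.
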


\begin{proof}
We expand $1_A = f_A + \delta 1_{[N]^d}$ by multilinearity in
\begin{align*}
	O( c_0 \delta^s )
	&= T( 1_A, \dots, 1_A ) \\
	&= \delta^s T( 1_{[N]^d}, \dots, 1_{[N]^d} )
	+ \textstyle \sum T( \ast,\dots,f_A,\dots,\ast ) \\
	&= \delta^s D^K N^{-(ds-K)} \calN(N,\bfP,\bflambda)
	+ \textstyle \sum T( \ast,\dots,f_A,\dots,\ast ),
\end{align*}
where the sum is over $2^s - 1$ terms and the asterisks
denote functions equal to $f_A$ or $\delta 1_{[N]^d}$.
Recalling the assumption~\eqref{eq:intro:NumberSolsBounds},
we assume that $c_0$ is small enough and use the pigeonhole principle 
to obtain a lower bound of the form
\begin{align*}
	\delta^s \lesssim |T(f_1,\dots,f_s)|,
\end{align*}
where a number $\ell \geq 1$ of the functions $f_i$ are equal to $f_A$,
and others are equal to $\delta 1_{[N]^d}$.
Therefore, by~\eqref{eq:add:HolderOnT} and~\eqref{eq:add:HBoundLinfty},
we have
\begin{align*}
	\delta^s
	\lesssim \| H_{f_A} \|_s^\ell \cdot \delta^{s-\ell} \| H \|_s^{s-\ell}
	\lesssim \delta^{s-\ell} \| H_{f_A} \|_s^\ell.
\end{align*}
After some rearranging we find that
$\delta \lesssim \| H_{f_A} \|_s$,
which finishes the proof.
\end{proof}

The next step is identical to that in the one-dimensional case~\cite[Section~8]{me:logkeil}:
we extract a large restricted moment involving few frequencies.

\begin{proposition}
\label{thm:add:LargeRestrictedMoment}
There exist positive constants $c_0$, $c_1$, $C_1$ such that
the following holds.
If $A$ is a subset of $[N]^d$ of density $\delta$
such that $T(1_A,\dots,1_A) \leq c_0 \delta^s$, then
there exists $1 \leq R \leq (\delta/2)^{-C_1}$
and distinct frequencies $\bfxi_1,\dots,\bfxi_R \in \ZM$ such that
\begin{align*}
	R^{c_1} \lesssim \sum_{i=1}^R |H_{f_A/\delta}(\bfxi_i)|^{s'}.
\end{align*}
\end{proposition}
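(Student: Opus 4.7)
The plan is to extract the desired $R$ frequencies as the ``large spectrum'' of $H_{f_A/\delta}$ above a carefully chosen threshold. By Proposition~\ref{thm:add:LargeMoment} applied with $c_0$ as in its statement, one obtains the lower bound $\|H_{f_A/\delta}\|_s^s \gtrsim 1$. By Proposition~\ref{thm:add:HBounds} applied at $p=s'$ together with $\|f_A/\delta\|_\infty \leq 1/\delta$, one obtains the complementary upper bound $\|H_{f_A/\delta}\|_{s'}^{s'} \lesssim \delta^{-s'}$.

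Fix $\eta := c_\eta\,\delta^{s'/(s-s')}$ for a sufficiently small constant $c_\eta = c_\eta(\bfP,\bflambda) > 0$, and let $S_\eta := \{\bfxi \in \ZM : |H_{f_A/\delta}(\bfxi)| \geq \eta\}$. Splitting the $\ell^s$ sum at the threshold $\eta$, one has
\[
    \sum_{|H_{f_A/\delta}| < \eta} |H_{f_A/\delta}|^s
    \leq \eta^{s-s'}\,\|H_{f_A/\delta}\|_{s'}^{s'}
    \lesssim c_\eta^{s-s'},
\]
which, upon choosing $c_\eta$ small enough, is at most half the lower bound $\gtrsim 1$. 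Consequently $\sum_{\bfxi \in S_\eta} |H_{f_A/\delta}(\bfxi)|^s \gtrsim 1$, and using $|H_{f_A/\delta}| \geq \eta$ on $S_\eta$ one upgrades this to the $s'$-moment estimate
\[
    \sum_{\bfxi \in S_\eta} |H_{f_A/\delta}(\bfxi)|^{s'}
    \geq \eta^{s'-s} \sum_{\bfxi \in S_\eta} |H_{f_A/\delta}(\bfxi)|^s
    \gtrsim \eta^{-(s-s')} \sim \delta^{-s'}.
\]

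To bound $R := |S_\eta|$ from above, Chebyshev's inequality gives $R\,\eta^{s'} \leq \|H_{f_A/\delta}\|_{s'}^{s'} \lesssim \delta^{-s'}$, so $R \lesssim \delta^{-ss'/(s-s')}$. Taking $C_1$ slightly larger than $ss'/(s-s')$ ensures $R \leq (\delta/2)^{-C_1}$ once $\delta$ is smaller than a constant depending only on $\bfP$ and $\bflambda$. Setting $c_1 := s'/C_1$ and enumerating $S_\eta = \{\bfxi_1,\dots,\bfxi_R\}$, the upper bound on $R$ yields $R^{c_1} \leq \delta^{-c_1 C_1} = \delta^{-s'} \lesssim \sum_{i=1}^R |H_{f_A/\delta}(\bfxi_i)|^{s'}$, which is the claimed inequality. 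There is no serious obstacle beyond correctly identifying the threshold $\eta \sim \delta^{s'/(s-s')}$; this is precisely the critical scale at which the assumed $\ell^s$ lower bound and $\ell^{s'}$ upper bound meet, so that the $\ell^s$ mass is forced to concentrate above it.
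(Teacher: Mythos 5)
Your first step coincides with the paper's: Proposition~\ref{thm:add:LargeMoment} gives $1 \lesssim \sum_{\bfxi}|H_{f_A/\delta}(\bfxi)|^{s}$, and \eqref{eq:add:HBoundLinfty} with $\|f_A/\delta\|_\infty \leq \delta^{-1}$ gives $\sum_{\bfxi}|H_{f_A/\delta}(\bfxi)|^{s'} \lesssim \delta^{-s'}$; both are fine. The gap is in your extraction step. Since $s' < s$, on the set $S_\eta = \{|H_{f_A/\delta}| \geq \eta\}$ the pointwise inequality goes the other way: writing $x = |H_{f_A/\delta}(\bfxi)| \geq \eta$, one has $x^{s'} = x^{s}\,x^{s'-s} \leq \eta^{s'-s}x^{s}$ because $t \mapsto t^{s'-s}$ is decreasing. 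So your inequality $\sum_{S_\eta}|H_{f_A/\delta}|^{s'} \geq \eta^{s'-s}\sum_{S_\eta}|H_{f_A/\delta}|^{s}$ is reversed, and the intermediate claim $\sum_{S_\eta}|H_{f_A/\delta}|^{s'} \gtrsim \delta^{-s'}$ does not follow from the two moment bounds. Indeed it cannot: your argument uses nothing about $H_{f_A/\delta}$ beyond those bounds (together with $\|H_{f_A/\delta}\|_\infty \leq 2$), and a sequence with a single value of size $\asymp 1$ and negligible entries elsewhere satisfies both bounds while $\sum_{S_\eta}|\cdot|^{s'} = O(1) \ll \delta^{-s'}$. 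In such a situation the proposition itself still holds (take $R=1$), but your final chain $R^{c_1} \leq (\delta/2)^{-c_1C_1} \lesssim \delta^{-s'} \lesssim \sum_{i}|H_{f_A/\delta}(\bfxi_i)|^{s'}$ collapses at its last link.

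The deeper point is that a single threshold at the ``critical scale'' cannot produce the factor $R^{c_1}$: with the inequality pointing the right way (and $\|H_{f_A/\delta}\|_\infty \lesssim 1$) the best you extract is $\sum_{\bfxi \in S_\eta}|H_{f_A/\delta}(\bfxi)|^{s'} \gtrsim 1$, which is weaker than $R^{c_1}$ when $R$ is large, and one cannot force $R$ to be bounded since the mass may be spread over many moderately large frequencies. This is precisely what the paper outsources to \cite[Lemma~8.1]{me:logkeil}: after sorting the values $|H_{f_A/\delta}(\bfxi)|$ in decreasing order, a pigeonhole over dyadic level sets locates a scale at which the number $R$ of frequencies above it (which the $\ell^{s'}$ bound caps at $(\delta/2)^{-C_1}$) satisfies $\sum_{i \leq R}|H_{f_A/\delta}(\bfxi_i)|^{s'} \gtrsim R^{c_1}$; the power of $R$ comes from comparing the selected level height with $R$ itself, not from the single threshold $\eta \sim \delta^{s'/(s-s')}$. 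To close your proof you need an argument of this dyadic type (or simply to quote that lemma), rather than the one-cut large-spectrum bound.
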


\begin{proof}
By Proposition~\ref{thm:add:LargeMoment} 
and~\eqref{eq:add:HBoundLinfty}, we have
\begin{align*}
	1 \lesssim \sum_{\bfxi} |H_{f_A/\delta}(\bfxi)|^s,
	\qquad
	\sum_{\bfxi} |H_{f_A/\delta}(\bfxi)|^{s'} \lesssim \delta^{-s'}.
\end{align*}
The proposition then follows at once from~\cite[Lemma~8.1]{me:logkeil}
upon reordering the $|H_{f_A/\delta}(\bfxi)|$ by size.
\end{proof}

The next stage of the arithmetic Heath-Brown-Szemerédi method
requires an estimate of simultaneous diophantine approximation
essentially due to Schmidt~\cite[Chapter~7]{Baker:Book}
and refined by Green and Tao~\cite[Proposition~A.2]{GT:DiophApprox}.
Here we use the more general version of Lyall and Magyar~\cite[Proposition~B.2]{LM:DiophApprox},
which applies to monomials of arbitrary degree.

\begin{proposition}
\label{thm:add:SimultDiophApprox}
Let $L,T \in \N$ and $k \geq 1$.
There exist constants $c, C > 0$ 
depending at most on $k$
such that, for any
$\theta_1,\dots,\theta_T \in \R$
and for $L \geq (2T)^{C T^2}$,
there exist $1 \leq q \leq L$ such that
$\| q^k \theta_i \|_\T \leq L^{- c T^{-2}}$
for all $1 \leq i \leq T$.
\end{proposition}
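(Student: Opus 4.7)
The plan is to follow the classical iterative scheme of Schmidt, in the form adapted to monomials of arbitrary degree by Lyall and Magyar. The argument proceeds by induction on $T$, with each stage invoking a one-variable diophantine approximation estimate for $k$-th powers.

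First I would establish the one-variable base case: for any $\theta \in \R$ and any integer $Q \geq 2$, there exists $1 \leq q \leq Q$ with $\| q^k \theta \|_\T \leq Q^{-\sigma_k}$ for a constant $\sigma_k \in (0,1]$ depending only on $k$. For $k = 1$ this is immediate from Dirichlet's theorem with $\sigma_1 = 1$. For $k \geq 2$ one may either quote a quantitative form of Weyl's inequality for the sum $\sum_{n=1}^Q e(n^k \theta)$ and extract the desired $q$ by standard Dirichlet-type averaging, or proceed more elementarily by iterating $k-1$ rounds of Weyl differencing so as to reduce the degree to one.

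The inductive step then runs as follows. Suppose one has already found $q_0 \leq L_0$ with $\| q_0^k \theta_i \|_\T \leq \eta_0$ for $1 \leq i \leq T-1$. Any multiple $q = q_0 m$ automatically satisfies $\| q^k \theta_i \|_\T \leq m^k \eta_0$ for those $i$, so it suffices to choose $m$ with $\| m^k \psi \|_\T$ small, where $\psi = q_0^k \theta_T \in \R$. Applying the one-variable base case to $\psi$ with budget $L/L_0$ yields $m \leq L/L_0$ with $\| m^k \psi \|_\T \leq (L/L_0)^{-\sigma_k}$. Taking the new tolerance to be $\eta = \max( m^k \eta_0, (L/L_0)^{-\sigma_k} )$ produces a recursion for the pairs $(L_t, \eta_t)$ whose $T$-fold iterate, after geometric balancing of the two terms at each step, yields a final exponent of the shape $L^{-c T^{-2}}$, with the hypothesis $L \geq (2T)^{CT^2}$ guaranteeing that $q$ stays below $L$ throughout the process.

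The main technical obstacle will be the careful exponent bookkeeping through the $T$ iterations: each step simultaneously inflates the size of $q$ multiplicatively and degrades the approximation exponent by a factor of order $\sigma_k$, and one must arrange the geometric balance so that the cumulative loss remains polynomial, rather than exponential, in $T$. Verifying that the resulting exponent decays only like $T^{-2}$ rather than, say, like $\sigma_k^T$ is precisely what is proved in~\cite{LM:DiophApprox}, and was already implicit in the earlier version of Green and Tao~\cite{GT:DiophApprox} on which the estimate ultimately rests.
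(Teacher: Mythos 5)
The paper does not actually prove this proposition: it is quoted as is from Lyall and Magyar \cite[Proposition~B.2]{LM:DiophApprox}, described as a refinement of Schmidt \cite[Chapter~7]{Baker:Book} and Green--Tao \cite[Proposition~A.2]{GT:DiophApprox}. So the only question is whether your sketch would genuinely reprove it, and it would not: the induction-by-multiples scheme cannot produce the exponent $L^{-cT^{-2}}$. Concretely, if at step $t$ you spend a budget $M_t = L^{a_t}$ (with $a_t \geq 0$, $\sum_t a_t \leq 1$) and invoke the one-variable bound with exponent $\sigma_k \leq 1$, then the frequency treated at step $t$ ends the iteration with error at most $(M_{t+1} \cdots M_T)^k M_t^{-\sigma_k}$, so demanding that every error be at most $L^{-\beta}$ forces $\sigma_k a_t \geq k(a_{t+1} + \dotsb + a_T) + \beta$ for all $t$. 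Writing $S_t = a_t + \dotsb + a_T$, this reads $S_{t+1} \leq \tfrac{\sigma_k}{\sigma_k + k} S_t - \tfrac{\beta}{\sigma_k + k}$, whence $\beta \leq \sigma_k \big( \tfrac{\sigma_k}{\sigma_k + k} \big)^{T-1} \leq 2^{-(T-1)}$ since $\sigma_k \leq 1 \leq k$. No choice of ``geometric balancing'' escapes this: the scheme intrinsically loses a factor exponential in $T$, so it proves the statement only with an exponent like $L^{-c\rho^T}$, $\rho < 1$, and with a hypothesis of the shape $L \geq \exp(C(1+k/\sigma_k)^T)$ in place of $L \geq (2T)^{CT^2}$. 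Downstream this is fatal for the paper's purpose: fed into Proposition~\ref{thm:add:LinPolPhases} and the density increment, an exponential-in-$R$ loss would force $N \geq \exp(\exp((\delta/2)^{-C}))$ and degrade the final density bound from $(\log N)^{-c}$ to $(\log\log N)^{-c}$, which is exactly the improvement the paper is designed to secure.

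The genuine content of the proposition is precisely the polynomial-in-$T$ loss, and the arguments of Schmidt, Green--Tao and Lyall--Magyar obtain it by a structurally different route: they prove a counting version (a lower bound on the number of $q \leq L$ satisfying all the conditions $\| q^k \theta_i \|_\T \leq \eps$ simultaneously), driven by Weyl-type exponential sum estimates, with an induction whose losses compound polynomially rather than by constructing $q$ as a product of successive moduli. Your closing paragraph concedes that the $T^{-2}$-versus-$\sigma_k^T$ issue ``is precisely what is proved'' in the references; since that is the whole point of the statement, the proposal is not a proof but a reduction to the cited result. Either quote \cite[Proposition~B.2]{LM:DiophApprox} as the paper does, or reproduce the Schmidt-type counting argument; the multiples iteration cannot be repaired to give the stated exponent.
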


We define a cube progression as
a set of the form $\bfu + q [L]^d$
with $\bfu \in \Z^d$ and $q,L \geq 1$.
We define a polynomial phase function $\phi : \Z^d \rightarrow \T$
simply as a map $\phi(\bfx) = G(\bfx) \bmod 1$,
for a polynomial $G \in \R[x_1,\dots,x_d]$, and we define\footnote{
This is a slight abuse of notation, since $G$ is not uniquely defined from $\phi$,
but in practice we consider polynomial phase functions as formal couples $(\phi,G)$.}
the degree of $\phi$ to be that of $G$.
When $Q$ is a subset of $\Z^d$ and $\phi : \Z^d \rightarrow \T$ 
is a polynomial phase function, we let
\begin{align*}
	\diam_Q(\phi) = \sup_{\bfx,\bfy \in Q} \| \phi(\bfx) - \phi(\bfy) \|_\T.
\end{align*}
With this vocabulary in place, we now carry out a familiar linearization procedure.

\begin{proposition}[Simultaneous linearization of polynomial phases]
\label{thm:add:LinPolPhases}
Let $k \geq 0$ and $d \geq 1$.
There exist constants $c, C > 0$
depending at most on $k$ and $d$ such that
the following holds.
Let $R \geq 1$ and suppose that 
$\phi_1,\dots,\phi_R : \Z^d \rightarrow \T$ 
are polynomial phase functions
such that $\phi_j(0) = 0$ and $\deg \phi_j \leq k$
for all $j \in [R]$.
Assume that $N \geq (2R)^{C R^{2k}}$.
Then there exists a partition of the form 
${ [N]^d = ( \bigsqcup_i Q_i ) \bigsqcup \Xi }$,
where each $Q_i$ is a cube progression of size
$|Q_i| \geq N^{ c R^{-2k} }$ such that
$\diam_{Q_i}(\phi_j) \leq N^{- c R^{-2k}}$ 
for every $j \in [R]$,
and where $|\Xi| \leq N^{ d - c R^{-2k} }$.
\end{proposition}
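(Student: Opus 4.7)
The plan is to induct on the degree $k$. When $k = 1$, the phases $\phi_j(\bfx) \equiv \sum_m \alpha_{j,m} x_m \pmod{1}$ are linear, and I would apply Proposition~\ref{thm:add:SimultDiophApprox} (with its internal exponent equal to $1$) to the $dR$ coefficients $\{\alpha_{j,m}\}$. Choosing $L = N^{\beta_1}$ for a small constant $\beta_1 = \beta_1(d) > 0$ and partitioning $[N]^d$ into cube progressions of common step $q_1 \leq L$ and side $L_1' = N^{\gamma_1}$ with $\gamma_1 \asymp_d R^{-2}$, one has $\phi_j(\bfv + q_1\bfa) - \phi_j(\bfv + q_1\bfb) = \sum_m q_1 \alpha_{j,m}(a_m - b_m)$, bounded termwise by $d L_1' \| q_1 \alpha_{j,m} \|_\T$, which closes all three required estimates.

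For the inductive step, I would write $\phi_j(\bfx) \equiv \sum_{1 \leq |\bfi| \leq k} c_{j,\bfi}\, \bfx^{\bfi} \pmod{1}$ and apply Proposition~\ref{thm:add:SimultDiophApprox} with exponent $k$ to the top-degree coefficients $\{c_{j,\bfi} : |\bfi|=k\}$ \emph{alone}, a set of cardinality $T \lesssim_{k,d} R$. Setting $L_1 = N^{\beta_1}$ with $\beta_1 = \beta_1(k,d) > 0$ a small constant, this yields $q_1 \leq L_1$ satisfying $\|q_1^k c_{j,\bfi}\|_\T \leq L_1^{-c T^{-2}} \leq N^{-c'R^{-2}}$ uniformly. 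I would then tile $[N]^d$ by cube progressions of the form $\bfv + q_1[L_1']^d$ with $L_1' = N^{\gamma_1}$ and $\gamma_1 \asymp R^{-2}$, leaving a boundary set $\Xi_1$ of size $\lesssim_d q_1 L_1' N^{d-1}$.

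On each such cube progression, the pulled-back phase $\phi_j^{(1)}(\bfa) = \phi_j(\bfv + q_1\bfa)$ is a polynomial of degree $\leq k$ in $\bfa \in [L_1']^d$, whose coefficient of $\bfa^{\bfl}$ for $|\bfl| = k$ is precisely $q_1^k c_{j,\bfl}$, within $N^{-c'R^{-2}}$ of an integer by the Diophantine approximation above. Writing $q_1^k c_{j,\bfl} = n_{j,\bfl} + e_{j,\bfl}$ and absorbing the integer parts (which vanish modulo $1$ once multiplied by the integer $\bfa^{\bfl}$), I obtain a decomposition $\phi_j^{(1)} \equiv \tilde{\phi}_j^{(1)} + E_j \pmod{1}$ with $\deg \tilde{\phi}_j^{(1)} \leq k-1$ and $\|E_j\|_\infty \lesssim_{k,d} (L_1')^k N^{-c'R^{-2}}$. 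Invoking the inductive hypothesis in degree $k-1$ for the $R$ phases $\tilde{\phi}_j^{(1)} - \tilde{\phi}_j^{(1)}(0)$ on $[L_1']^d$ produces a sub-partition $[L_1']^d = (\bigsqcup_i Q_i') \sqcup \Xi'$, which lifts through $\bfa \mapsto \bfv + q_1\bfa$ to the desired fine cube progressions in $[N]^d$.

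The main obstacle is the parameter bookkeeping needed to close the induction: setting $\gamma_1 = c_k/(c_{k-1} R^2)$ and choosing $c_k = c_k(k,d) > 0$ recursively small in terms of $c_{k-1}$, $\beta_1$, and $c'$, one verifies that the combined diameter $(L_1')^{-c_{k-1} R^{-2(k-1)}} + 2\|E_j\|_\infty$ is at most $N^{-c_k R^{-2k}}$, that each fine cube progression has size at least $N^{c_k R^{-2k}}$, and that $|\Xi_1| + \sum_i |\Xi_i'| \leq N^{d - c_k R^{-2k}}$. The precondition $L_1' \geq (2R)^{C_{k-1} R^{2(k-1)}}$ for the recursive call determines $C_k$ in terms of $C_{k-1}$ and is compatible with the assumption $N \geq (2R)^{C_k R^{2k}}$ since $\gamma_1$ only costs a factor of $R^2$.
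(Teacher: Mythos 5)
Your proposal is correct and follows essentially the same route as the paper's proof: simultaneous Diophantine approximation (Proposition~\ref{thm:add:SimultDiophApprox}) applied to the degree-$k$ coefficients rescaled by $q^k$, a tiling of $[N]^d$ into cube progressions of step $q$ and small side $N^{\gamma}$ with $\gamma \asymp R^{-2}$, and induction on the degree applied to the lower-order part on each tile, with the same parameter bookkeeping to land on the $R^{-2k}$ exponents. Writing $q^k c_{j,\bfl}$ as an integer plus a small error and absorbing the integer modulo $1$ is just a rephrasing of the paper's triangle-inequality bound on the top-degree contribution after Taylor expansion, so the two arguments coincide in substance.
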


\begin{proof}
We induct on $k \geq 0$ ;
when $k=0$ all the polynomials are zero and 
we can take $Q_1 = [N]^d$ and $\Xi = \varnothing$.
We now assume that $k \geq 1$,
and throughout the proof
we let implicit or explicit constants depend at most on $k$ and $d$.
The letters $c$ and $C$ denote positive such constants whose
value may change from line to line.

Let $L \geq 1$ and $q \geq 1$
be parameters to be determined later.
By partitioning $[N]^d$ into congruence classes
and then into subcubes, it is easy to find a partition of the form
$[N]^d = \bigsqcup_{\bfv \in V} ( \bfv + q [L]^d ) \bigsqcup \Xi$
with $|\Xi| \lesssim N^{d-1/2}$, as long as $qL \leq N^{1/2}$.
Consider an index $j \in [R]$
and the Taylor expansion of $\phi_j$ at $\bfv \in V$ given by
\begin{align*}
	\phi_j( \bfv + q \bfx ) 
	= \sum_{1 \leq |\bfalpha| \leq k} \frac{\partial^{\bfalpha} \phi_j( \bfv )}{\bfalpha !} q^{|\bfalpha|} \bfx^{\bfalpha}
	= \sum_{|\bfalpha| = k} q^k \theta_{\bfalpha,j} \bfx^{\bfalpha}
		+ \psi_{\bfv,q,j}(\bfx),
\end{align*}
where $\bfx \in \Z^d$, $\theta_{\bfalpha,j} \in \R$ 
and every $\psi_{\bfv,q,j} \in \R[x_1,\dots,x_d]$
has degree less than $k$ and zero constant coefficient
(since $\phi_j$ has degree at most $k$, its derivatives of order $k$ are constant).
Consequently we have, for every $j \in [R]$, $\bfv \in V$, $\bfx,\bfy \in \Z^d$,
\begin{align*}
	\phi_j( \bfv + q \bfx ) - \phi_j( \bfv + q \bfy )
	= \sum_{|\bfalpha| = k}
	q^k \theta_{\bfalpha,j} ( \bfx^{\bfalpha} - \bfy^{\bfalpha} )
	+ \psi_{\bfv,q,j}( \bfx ) - \psi_{\bfv,q,j}( \bfy ).
\end{align*}
When $\bfx, \bfy \in [L]^d$, by
the triangle inequality for the distance on $\T$, this implies that
\begin{align}
\label{eq:add:Phi}
	\| 	\phi_j( \bfv + q \bfx ) - \phi_j( \bfv + q \bfy ) \|_\T
	\lesssim L^{k} \max_{|\bfalpha| = k} \| q^k \theta_{\bfalpha,j} \|_\T
	 + \| \psi_{\bfv,q,j}( \bfx ) - \psi_{\bfv,q,j}( \bfy ) \|_\T.
\end{align}

At this point we use Proposition~\ref{thm:add:SimultDiophApprox}
to pick $1 \leq q \leq N^{1/4}$ 
such that $\| q^k \theta_{\bfalpha,j} \|_\T \leq N^{-cR^{-2}}$
for every $j \in [R]$ and every $|\bfalpha| = k$,
which is possible for $N \geq (2R)^{C R^2}$.
For each fixed $\bfv \in V$, we assume that $L \geq (2R)^{CR^{2(k-1)}}$
and use the induction hypothesis
to obtain a partition 
$[ L ]^d = (\, \bigsqcup_{\bfw \in W} Q_{\bfv,\bfw} ) \bigsqcup \Xi_{\bfv}$,
where each $Q_{\bfv,\bfw}$ is a cube progression such that
$|Q_{\bfv,\bfw}| \geq L^{cR^{-2(k-1)}}$ and
$\diam_{Q_{\bfv,\bfw}}( \psi_{\bfv,q,j} ) \leq L^{- c R^{-2(k-1)}}$
for every $j \in [R]$,
and with $|\Xi_{\bfv}| \leq L^{d - c R^{-2(k-1)}}$.
Inserting these diophantine and diameter bounds 
into~\eqref{eq:add:Phi}, we obtain
\begin{align}
\label{eq:add:Phi2}
	\| 	\phi_j( \bfv + q \bfx ) - \phi_j( \bfv + q \bfy ) \|_\T
	\lesssim L^k N^{-cR^{-2}} + L^{-cR^{-2(k-1)}},
\end{align}
uniformly for $j \in [R]$, $\bfv \in V$ and $\bfx,\bfy \in [L]^d$.

We choose finally $L = N^{c' R^{-2}}$ with $c'$ small enough so that
$L \leq N^{1/4}$ and the right-hand side of~\eqref{eq:add:Phi2} is
$O( N^{-c R^{-2k}} )$.
Working back through the conditions on $L$, we find
that this requires $N \geq (2R)^{CR^{2k}}$,
and when $C$ is large enough
we have therefore $\diam_{\bfv + q Q_{\bfv,\bfw}} \phi_j \leq N^{-cR^{-2k}}$
for all $j,\bfv,\bfw$.
We obtain a partition
\begin{align*}
	\textstyle
	[N]^d = \bigsqcup\limits_{\substack{ \bfv \in V \\ \bfw \in W }} ( \bfv + q Q_{\bfv,\bfw} )
			\,\bigsqcup\,
			\bigsqcup\limits_{\bfv \in V} ( \bfv + q\Xi_\bfv ) \bigsqcup \Xi.
\end{align*}
Since each set $\bfv + q \Xi_{\bfv}$ has density at most
$N^{-cR^{-2k}}$ in its ambient box $\bfv + q [L]^d$, the
disjoint union $\Xi' = \bigsqcup_{\bfv \in V} ( \bfv + q \Xi_\bfv )$ contained in $[N]^d$ 
has size at most $N^{d-cR^{-2k}}$,
and $\Xi'' = \Xi' \bigsqcup \Xi$ has size at most $N^{d-c'R^{-2k}}$.
\end{proof}

To proceed further we need to recall the language of 
factors~\cite[Section~6]{Tao:Factors},
a specialization of the theory of 
conditional expectations~\cite[Chapter~7]{Galambos:Book} to the finite setting.
We call factor a $\sigma$-algebra of the finite set $[N]^d$. 
It can be verified that the factors of $[N]^d$ are in 
one-to-one correspondence with its partitions via
\begin{align}
\label{eq:add:FactorsCorresp}
	\textstyle
	(B_i)_{i \in [\ell]} \ \ \text{such that} \ \ [N]^d = \bigsqcup\limits_{i=1}^\ell B_i
	\quad\mapsto\quad
	\calB = \big\lbrace\, \bigsqcup\limits_{i \in J} B_i,\ J \subset [\ell] \,\big\rbrace.
\end{align}
We define an atom of a factor $\calB$ as a minimal non-empty
element of $\calB$, and those are the sets $B_i$ under the 
correspondence~\eqref{eq:add:FactorsCorresp}.
It can be verified that $f : [N]^d \rightarrow \C$ is
$\calB$-measurable if and only if it is constant on every atom of $\calB$.
We define the full factor $\calB_{\mathrm{full}}$ as the factor
whose atoms are all the singletons of $[N]^d$,
so that every function $f : [N] \rightarrow \C$ is $\calB_{\mathrm{full}}$-measurable,
and has a well-defined conditional expectation 
$\EB{f}$ for any factor $\calB$ of $[N]^d$.
One can check that 
$\EB{f} = \sum_{i \in [\ell]} (\E_{B_i} f) 1_{B_i}$
under the correspondence~\eqref{eq:add:FactorsCorresp}.
All the usual properties of conditional expectation can be verified
directly in the finite setting, and we encourage the reader to do so as needed.

In our situation, the language of factors will serve to simplify the
step~\cite[Section~9]{me:logkeil} 
of the energy-increment strategy where the balanced function 
is replaced by an averaged version of itself over
a family of arithmetic progressions, 
which we now interpret as a conditional expectation.
The function $g$ below corresponds to the function $f_A/\delta$ of
Proposition~\ref{thm:add:LargeRestrictedMoment},
and when $\Xi$ is a subset of $[N]^d$ we write $\Xi^c = [N]^d \smallsetminus \Xi$.

\begin{proposition}[Conditioning the balanced function]
\label{thm:add:LargeMomentConditioned}
Let $\delta \in (0,1]$ and suppose that $g : [N]^d \rightarrow \C$
is such that $\| g \|_\infty \leq \delta^{-1}$.
Suppose that, for certain constants $c_1,C_1 > 0$,
there exist $1 \leq R \leq (\delta/2)^{-C_1}$
and distinct frequencies $\bfxi_1\dots,\bfxi_R \in \ZM$ 
such that
\begin{align}
\label{eq:add:LargeRestrictedMoment}
	R^{c_1} \lesssim \sum_{i=1}^R |H_g(\bfxi_i)|^{s'}.
\end{align}
Then there exists $C_2 > 0$ such that, 
when $N \geq e^{(\delta/2)^{-C_2}}$, the following holds.
Consider the polynomial phase functions $\phi_1,\dots,\phi_R : \Z^d \rightarrow \T$ such that 
\begin{align*}
	&\phantom{(1 \leq i \leq r),} &
	H_g(\bfxi_i) &= \E_{n \in [N]^d} g(\bfn) e(\phi_i(\bfn))
	&&(1 \leq i \leq R),
\end{align*}
and consider the partition $[N]^d = ( \bigsqcup_i Q_i ) \bigsqcup \Xi$ 
given by Proposition~\ref{thm:add:LinPolPhases}.
Let $\calB$ be the factor of $[N]^d$ 
corresponding to this partition,
and write $\wt{g} = \EB{ g 1_{\Xi^c} }$.
Then
\begin{align*}
	R^{c_1} \lesssim
	\| H_{\wt{g}} \|_{s'}^{s'}.
\end{align*}
\end{proposition}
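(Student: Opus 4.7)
The plan is to show that $H_g(\bfxi_i)$ and $H_{\wt{g}}(\bfxi_i)$ differ by a super-polynomially small amount at each of the $R$ distinguished frequencies, thereby transferring the lower bound~\eqref{eq:add:LargeRestrictedMoment} from $g$ to $\wt{g}$ at those same points. Since the $\bfxi_i$ are distinct, the conclusion then follows immediately from $\|H_{\wt g}\|_{s'}^{s'} \geq \sum_{i=1}^R |H_{\wt g}(\bfxi_i)|^{s'}$.

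First I would verify that Proposition~\ref{thm:add:LinPolPhases} applies to $(\phi_i)_{i=1}^R$. Reading off $\phi_i(\bfn) = \sum_{j=1}^r \xi_{i,j} P_j(\bfn)/M^{k_j}$ from definition~\eqref{eq:add:FfHfDef}, each $\phi_i$ has degree at most $k$ and vanishes at the origin by homogeneity of the $P_j$. Since $R \leq (\delta/2)^{-C_1}$, the hypothesis $N \geq (2R)^{CR^{2k}}$ is comfortably met when $N \geq e^{(\delta/2)^{-C_2}}$ for $C_2$ large enough in terms of $C_1$ and $k$, so one obtains the partition $[N]^d = (\bigsqcup_j Q_j) \bigsqcup \Xi$ with uniform diameter bound $\diam_{Q_j}(\phi_i) \leq N^{-cR^{-2k}}$ and exceptional set bound $|\Xi| \leq N^{d-cR^{-2k}}$.

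Next I would compare $H_g(\bfxi_i)$ and $H_{\wt{g}}(\bfxi_i)$ atom-by-atom. Fixing a base point $\bfn_j^\ast \in Q_j$ ensures $|e(\phi_i(\bfn)) - e(\phi_i(\bfn_j^\ast))| \lesssim N^{-cR^{-2k}}$ for all $\bfn \in Q_j$ by the Lipschitz property of $e(\cdot)$. Setting $S_j = \sum_{\bfn \in Q_j} g(\bfn)$ and using $\|g\|_\infty \leq \delta^{-1}$, telescoping on each cube yields
\begin{align*}
H_g(\bfxi_i) &= \frac{1}{N^d} \sum_j e(\phi_i(\bfn_j^\ast))\, S_j + O(\delta^{-1} N^{-cR^{-2k}}),
\end{align*}
where the contribution from $\Xi$ is absorbed via $|\Xi|/N^d \leq N^{-cR^{-2k}}$. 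The atomic description of $\wt g$ (vanishing on $\Xi$, equal to $S_j/|Q_j|$ on $Q_j$) together with the same telescoping on $e(\phi_i)$ yields an identical expression for $H_{\wt g}(\bfxi_i)$, whence $|H_g(\bfxi_i) - H_{\wt{g}}(\bfxi_i)| \lesssim \delta^{-1} N^{-cR^{-2k}}$.

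Since $|H_g(\bfxi_i)|, |H_{\wt g}(\bfxi_i)| \leq \delta^{-1}$, the elementary inequality $\bigl||x|^{s'}-|y|^{s'}\bigr| \leq s'\max(|x|,|y|)^{s'-1}|x-y|$ and summation over $i$ give
\begin{align*}
\sum_{i=1}^R \bigl| |H_g(\bfxi_i)|^{s'} - |H_{\wt{g}}(\bfxi_i)|^{s'} \bigr| \lesssim R\, \delta^{-s'} N^{-cR^{-2k}}.
\end{align*}
The only remaining task, and the sole obstacle in the argument, is to calibrate $C_2$ large enough (in terms of $C_1$, $k$, $s'$, $c_1$) that this error is dwarfed by $R^{c_1}$; this is purely book-keeping against the double-exponential hypothesis $N \geq e^{(\delta/2)^{-C_2}}$, which converts $N^{-cR^{-2k}}$ into a factor smaller than any prescribed negative power of $\delta$. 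Combining with~\eqref{eq:add:LargeRestrictedMoment} then yields $\sum_i |H_{\wt g}(\bfxi_i)|^{s'} \gtrsim R^{c_1}$, and hence the claim.
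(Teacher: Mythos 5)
Your proposal is correct and follows essentially the same route as the paper: what you do atom-by-atom with base points is exactly the paper's observation that $\EB{ g 1_{\Xi^c} e(\phi_i) } = \EB{ g 1_{\Xi^c} } e(\phi_i) + O(\delta^{-1}N^{-cR^{-2k}})$ combined with the tower property, i.e.\ phases nearly constant on atoms, $\Xi$ negligible, pointwise transfer of $H_g(\bfxi_i)$ to $H_{\wt g}(\bfxi_i)$, then completing the sum over the distinct $\bfxi_i$ after calibrating $C_2$ against $R \leq (\delta/2)^{-C_1}$. The only cosmetic caveat is that your inequality $\bigl||x|^{s'}-|y|^{s'}\bigr| \leq s'\max(|x|,|y|)^{s'-1}|x-y|$ presumes $s' \geq 1$, whereas the paper's $(a+b)^{s'} \lesssim_{s'} a^{s'}+b^{s'}$ covers all $s'>0$; this is immaterial in every application.
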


\begin{proof}
Consider an index $i \in [R]$.
We first neglect the error set $\Xi$ via
\begin{align}
\label{eq:add:NeglectXi}
	H_g(\bfxi_i) = \E\big[ g e(\phi_i) \big]
				= \E\big[ g 1_{\Xi^c} e(\phi_i) \big] + O(\delta^{-1}N^{-cR^{-2k}}).
\end{align}
Since $1_{\Xi^c} e(\phi_i)$ is almost constant 
on each cube progression $Q_j$ and zero on $\Xi$, we have
\begin{align*}
	\EB{ g 1_{\Xi^c} e(\phi_i) }
	= \EB{ g 1_{\Xi^c} } e(\phi_i) + O(\delta^{-1}N^{-cR^{-2k}}).
\end{align*}
Returning to~\eqref{eq:add:NeglectXi}, we can exploit this fact by conditioning on $\calB$ in
\begin{align*}
	H_g(\bfxi_i) 
	&= \E\Big[ \EB{ g 1_{\Xi^c} e(\phi_i) } \Big] + O(\delta^{-1}N^{-cR^{-2k}}) \\
	&= \E\Big[ \EB{ g 1_{\Xi^c} } e(\phi_i) \Big] + O(\delta^{-1}N^{-cR^{-2k}}) \\
	&= H_{ \wt{g} }( \bfxi_i ) + O(\delta^{-1}N^{-cR^{-2k}}).
\end{align*}
We can insert this estimate in~\eqref{eq:add:LargeRestrictedMoment} to obtain
\begin{align*}
	R^{c_1} \lesssim \sum_{i=1}^R |H_{\wt{g}}(\bfxi_i)|^{s'}
	+ O\big( R ( \delta^{-1}N^{-cR^{-2k}} )^{s'} \big).
\end{align*}
Recalling the size condition on $R$, and completing the sum,
we obtain the desired statement when $N \geq e^{(\delta/2)^{-C_2}}$
with $C_2 > 0$ large enough.
\end{proof}

Using the previous proposition and restriction estimates,
we aim to obtain a lower bound on the energy of the conditioned balanced function.
If we succeed in doing so, the
following proposition then yields a density increment.

\begin{proposition}[$L^2$ density increment]
\label{thm:add:L2DensIncr}
Let $\kappa \in [c_3,+\infty)$ for a constant $c_3 > 0$.
Suppose that $\calB$ is a factor of $[N]^d$ with atoms $(Q_i)$, $\Xi$
such that $|\Xi| \leq N^{d - (\delta/2)^{C_3}}$ for a constant $C_3 > 0$.
Suppose also that $A$ is a subset of $[N]^d$ of density $\delta$ such that
\begin{align*}
	\kappa \delta 
	\leq \| \EB{ f_A 1_{\Xi^c} } \|_{L^2[N]}.
\end{align*}
Then there exists $C_4 > 0$ such that,
for $N \geq e^{-(\delta/2)^{-C_4}}$,
there exists an atom $Q_i$ with
\begin{align*}
	( 1 + \tfrac{1}{2}\kappa^2 ) \delta \leq \frac{|A \cap Q_i|}{|Q_i|}.
\end{align*}
\end{proposition}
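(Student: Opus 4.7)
The plan is to compute the conditional expectation $\EB{f_A 1_{\Xi^c}}$ explicitly on each atom, translate the hypothesis into a second-moment inequality on the relative densities $\mu_i := |A\cap Q_i|/|Q_i|$, and then close via the elementary pointwise bound $\mu_i^2 \leq \mu^* \mu_i$, where $\mu^* := \max_i \mu_i$.

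First, since $f_A 1_{\Xi^c}$ vanishes on $\Xi$, its conditional expectation equals $0$ on that atom; on each remaining atom $Q_i$ it is the constant $\mu_i - \delta$. Writing $w_i := |Q_i|/N^d$ and $\tau := |\Xi|/N^d \leq N^{d-(\delta/2)^{C_3}}/N^d$, the hypothesis reads
\[
\sum_{i} w_i (\mu_i - \delta)^2 = \| \EB{ f_A 1_{\Xi^c} } \|_{L^2[N]}^2 \geq \kappa^2 \delta^2.
\]
The partition identity gives $\sum_i w_i = 1 - \tau$, and summing $|A\cap Q_i|/N^d$ across $i$ yields $\sum_i w_i \mu_i = \delta - \sigma$ for some $\sigma \in [0,\tau]$.

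Next, I expand the square and rearrange:
\[
\sum_i w_i \mu_i^2 = \sum_i w_i (\mu_i - \delta)^2 + 2\delta \sum_i w_i \mu_i - \delta^2 \sum_i w_i \geq \kappa^2 \delta^2 + \delta^2 - 2\delta\tau.
\]
On the other hand, the bound $\mu_i^2 \leq \mu^* \mu_i$ gives
\[
\sum_i w_i \mu_i^2 \leq \mu^* \sum_i w_i \mu_i \leq \mu^* \delta.
\]
Comparing these two inequalities produces
\[
\mu^* \geq (1 + \kappa^2)\delta - \frac{2\tau}{\delta}.
\]

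Finally, under the assumption $\kappa \geq c_3$ and the size bound $\tau \leq N^{-(\delta/2)^{C_3}}$, for $N \geq e^{(\delta/2)^{-C_4}}$ with $C_4 > 0$ taken sufficiently large (depending on $c_3$ and $C_3$), the error $2\tau/\delta$ is dominated by $\kappa^2 \delta / 2$, yielding $\mu^* \geq (1 + \tfrac{1}{2}\kappa^2)\delta$ as required. Picking an atom $Q_i$ achieving the maximum $\mu^*$ finishes the proof. No substantial obstacle arises: every step is an elementary manipulation, with the only care needed being to track the small corrections coming from the exceptional atom $\Xi$.
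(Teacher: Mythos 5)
Your proof is correct and is essentially the paper's argument written out explicitly on atoms: the paper expands the same square, uses self-adjointness of $\EB{\cdot}$ to handle the $\Xi$-corrections, and closes with $\| \EB{1_{A'}} \|_2^2 \leq \| \EB{1_{A'}} \|_\infty \, \E\big[ \EB{1_{A'}} \big]$, which is exactly your pointwise bound $\mu_i^2 \leq \mu^\ast \mu_i$ in functional form. The bookkeeping of the error terms ($\sigma, \tau$ versus the paper's $O(N^{-(\delta/2)^{C_3}})$) matches as well, so no further changes are needed.
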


\begin{proof}
First note that $\EB{ 1_{[N]^d} 1_{\Xi^c} } = 1_{[N]^d \smallsetminus \Xi}$.
We write $\| \cdot \|_2 = \| \cdot \|_{L^2[N]}$ throughout this proof.
Expanding the square, we obtain
\begin{align*}
	\kappa^2 \delta^2
	&\leq \| \EB{ 1_{A \smallsetminus \Xi} } - \delta 1_{[N]^d \smallsetminus \Xi} \|_2^2	\\
	&\leq \| \EB{1_{ A \smallsetminus \Xi} } \|_2^2 - 2 \delta \langle \EB{1_{A \smallsetminus \Xi}} , 1_{[N]^d \smallsetminus \Xi} \rangle 
	+ \delta^2 \| 1_{[N]^d \smallsetminus \Xi} \|_2^2.
\end{align*}
Let $A' = A \smallsetminus \Xi$.
Since the conditional expectation operator is self-adjoint, we have then
\begin{align*}
	\kappa^2 \delta^2
	&\leq \| \EB{1_{A'}} \|_2^2 - 2 \delta \langle 1_{A'} , \EB{1_{[N]^d \smallsetminus \Xi}} \rangle + \delta^2 
	+ O( N^{ -(\delta/2)^{C_3} } )	
	\\
	&= \| \EB{1_{A'}} \|_2^2 - \delta^2 + O( N^{ -(\delta/2)^{C_3} } ).
\end{align*}
Assuming that $N \geq e^{-(\delta/2)^{-C_4}}$ with $C_4 > 0$ large enough, we have
\begin{align*}
	(1 + \tfrac{1}{2}\kappa^2) \delta^2 
	&\leq \| \EB{ 1_{A'} } \|_2^2 \\
	&\leq \| \EB{ 1_{A'} } \|_\infty \cdot \E\Big[ \EB{ 1_{A'} } \Big] \\
	&\leq \max_i ( \E_{Q_i} 1_A ) \cdot \delta, 
\end{align*}
where we have ignored the $\Xi$-average since $\E_{\Xi} 1_{A'} = 0$.
This gives the desired conclusion upon dividing by $\delta$.
\end{proof}

We are finally ready to derive our main iterative proposition.
It is at this point that we genuinely exploit the two types
of restriction estimates of Proposition~\ref{thm:add:HBounds},
in order to first obtain a lower bound on the energy of the 
conditioned balanced function,
and then apply a complete $L^2 \rightarrow L^p$ estimate.
At this stage we may also reduce our working hypothesis
to $A$ not containing any non-trivial solutions,
by our assumption~\eqref{eq:intro:NumberSolsBounds}
and the fact that $N$ is already assumed to be quite large
with respect to the density $\delta$.

\begin{proposition}
\label{thm:add:FinalDensIncr}
There exist positive constants $c,C$ such that the following holds.
Suppose that $A$ is a subset of $[N]^d$ of density $\delta$
such that all solutions $(\bfn_i) \in A^s$
to~\eqref{eq:add:SystPols} lie in $Z$,
and that $N \geq e^{-(\delta/2)^{-C}}$.
Then there exists $1 \leq R \leq (\delta/2)^{-C}$
and a cube progression $Q \subset [N]^d$ of size $N'$ such that,
writing $\delta' = |A \cap Q| / |Q|$, we have
\begin{align*}
	\delta' \geq ( 1 + c R^c ) \cdot \delta,
	\qquad
	N' \geq N^{c R^{-2k}}.
\end{align*}
\end{proposition}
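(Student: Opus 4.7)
The plan is to chain together the preceding propositions, after first using the hypothesis on $A$ to unlock the chain. Under the assumption that every $s$-tuple in $A^s$ solving~\eqref{eq:add:SystPols} lies in $Z$, the definition~\eqref{eq:add:TDef} together with the upper bound in~\eqref{eq:intro:NumberSolsBounds} yields
\[
    T(1_A,\dots,1_A) \leq D^K N^{-(ds-K)} \calN_Z(N,\bfP,\bflambda) \lesssim N^{-\omega},
\]
which is smaller than $c_0\delta^s$ once $N \geq e^{(\delta/2)^{-C}}$ with $C$ large enough. This allows me to invoke Proposition~\ref{thm:add:LargeRestrictedMoment} with $g = f_A/\delta$, producing an integer $R$ in the stated range $1 \leq R \leq (\delta/2)^{-C_1}$ and distinct frequencies $\bfxi_1,\dots,\bfxi_R \in \ZM$ at which the restricted moment $\sum_{i=1}^R |H_g(\bfxi_i)|^{s'}$ is bounded below by $R^{c_1}$.

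Next I feed these frequencies into Proposition~\ref{thm:add:LargeMomentConditioned}. This is where the linearization of the polynomial phases $\phi_i$ happens, and it produces the partition $[N]^d = (\bigsqcup_i Q_i) \bigsqcup \Xi$ into cube progressions $Q_i$ of size at least $N^{cR^{-2k}}$, together with an exceptional set $\Xi$ of density at most $N^{-cR^{-2k}}$. Calling $\calB$ the corresponding factor and $\wt g = \EB{g\, 1_{\Xi^c}}$ the conditioned balanced function, this step replaces the restricted spectral lower bound by the unrestricted estimate $R^{c_1} \lesssim \| H_{\wt g} \|_{s'}^{s'}$.

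The heart of the argument is then to convert this spectral information into an $L^2$ density bound on $\wt g$, which can drive the density increment. Since $\delta \wt g = \EB{f_A\, 1_{\Xi^c}}$ satisfies $\|\delta \wt g\|_\infty \leq 1$, the $L^2 \to \ell^{s'}$ restriction estimate~\eqref{eq:add:HBoundL2} would give $\|H_{\delta\wt g}\|_{s'} \lesssim \|\delta\wt g\|_{L^2[N]}$, and dividing by $\delta$ and combining with the previous step yields $R^{c_1/s'} \lesssim \| \wt g \|_{L^2[N]}$. I expect the main obstacle to lie precisely here: the estimate~\eqref{eq:add:HBoundL2} is only conditional on $\|\delta\wt g\|_{L^2[N]} \geq N^{-\nu}$, so I must separately rule out the alternative regime. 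In that regime the unconditional bound~\eqref{eq:add:HBoundLinfty} gives $\|H_{\delta\wt g}\|_{s'} \lesssim \|\delta\wt g\|_{L^2[N]}^{1-s''/s'}$, which for $\|\delta\wt g\|_{L^2[N]} < N^{-\nu}$ decays like a negative power of $N$ and contradicts $\|H_{\wt g}\|_{s'} \gtrsim R^{c_1/s'} \geq 1$ once $N \geq e^{(\delta/2)^{-C}}$ with $C$ large enough. This truncated-versus-full restriction dichotomy is the only genuinely new input, and it is what allows the argument to proceed with only the weak restriction hypothesis~\eqref{eq:intro:RestrTruncAgain}.

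With $\kappa = \| \wt g \|_{L^2[N]} \gtrsim R^{c_1/s'}$ in hand, and in particular $\kappa \geq c_3$ since $R \geq 1$, the final step is a direct application of the $L^2$ density increment Proposition~\ref{thm:add:L2DensIncr} to the factor $\calB$: noting that $|\Xi| \leq N^{d-cR^{-2k}} \leq N^{d-(\delta/2)^{C_3}}$ for $R \leq (\delta/2)^{-C_1}$ and $C_3$ chosen appropriately, this produces an atom $Q = Q_i$---a cube progression of the required size $N' \geq N^{cR^{-2k}}$---on which the relative density of $A$ satisfies $\delta' \geq (1 + \tfrac{1}{2}\kappa^2)\delta \geq (1 + c R^{2c_1/s'})\delta$. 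Setting $c = 2c_1/s'$ (and adjusting constants) gives the stated conclusion.
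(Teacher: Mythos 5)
Your proposal is correct and follows essentially the same route as the paper: unlock $T(1_A,\dots,1_A)\leq c_0\delta^s$ from the hypothesis on $A$ and~\eqref{eq:intro:NumberSolsBounds}, chain Propositions~\ref{thm:add:LargeRestrictedMoment} and~\ref{thm:add:LargeMomentConditioned}, convert the moment lower bound into an $L^2$ bound on $\EB{f_A 1_{\Xi^c}}$ via Proposition~\ref{thm:add:HBounds}, and finish with Proposition~\ref{thm:add:L2DensIncr}. Your dichotomy for handling the conditional hypothesis of~\eqref{eq:add:HBoundL2} is just the contrapositive of the paper's step, which first uses~\eqref{eq:add:HBoundLinfty} to get $\|\wt f_A\|_{L^2[N]}\gtrsim\delta^{C}$ and then notes $\delta^{C}\geq N^{-\nu}$ since $N\geq e^{(\delta/2)^{-C}}$.
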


\begin{proof}
In the context of this proof, we let
$c,C$ denote positive constants whose value
may change from line to line, and which may
depend on $\bfP$ and $\bflambda$ as usual.
Since all solutions $(\bfn_i) \in A^s$ to~\eqref{eq:add:SystPols} lie in $Z$,
it follows from~\eqref{eq:add:TDef} and~\eqref{eq:intro:NumberSolsBounds} that
\begin{align*}
	T(1_A,\dots,1_A) \leq C N^{-\omega} \leq c_0 \delta^s,
\end{align*}
for $N \geq C\delta^{-s/\omega}$, where $c_0$ is the constant
in Proposition~\ref{thm:add:LargeRestrictedMoment}.
Assuming furthermore that $N \geq e^{(\delta/2)^{-C}}$ 
for a large enough $C > 0$,
we can then combine Propositions~\ref{thm:add:LargeRestrictedMoment}
and~\ref{thm:add:LargeMomentConditioned}
to obtain $1 \leq R \leq (\delta/2)^{-C}$ such that
\begin{align}
\label{eq:add:LargeMomentConditioned}
	\delta R^c \lesssim \| H_{ \wt{f}_A } \|_{s'},
\end{align}
where $\wt{f}_A = \EB{ f_A 1_{\Xi^c} }$ and $\calB$ is a factor of $[N]^d$
generated by atoms $(Q_i),\Xi$, with each $Q_i$ 
being a cube progression with $|Q_i| \geq N^{cR^{-C}}$
and with $|\Xi| \leq N^{d - (\delta/2)^C}$.
From~\eqref{eq:add:LargeMomentConditioned} 
and~\eqref{eq:add:HBoundLinfty}, 
noting also that $\| \wt{f}_A \|_\infty \leq \| f_A \|_\infty \leq 1$,
we deduce that for some $C > 0$,
\begin{align*}
	\delta^C \lesssim \| \wt{f}_A \|_{L^2[N]}.
\end{align*}
By assuming that $\delta \geq N^{-c}$ with $c > 0$ small enough and $N$ large,
we can ensure that $N^{-\nu} \leq \| \wt{f}_A \|_{L^2[N]}$,
where $\nu$ is the constant from Proposition~\ref{thm:add:HBounds},
and on the other hand we have $\| \wt{f}_A \|_\infty \leq 1$.
We may therefore apply~\eqref{eq:add:HBoundL2}
in~\eqref{eq:add:LargeMomentConditioned} to obtain
\begin{align*}
	\delta R^c \lesssim \| \EB{ f_A 1_{\Xi^c} } \|_{L^2[N]}.
\end{align*}
At this stage we can simply apply Proposition~\ref{thm:add:L2DensIncr}
to obtain the coveted density increment.
\end{proof}

The proof of Theorem~\ref{thm:intro:SystTslInv}
now follows by an iteration entirely similar to the one
in the one-dimensional setting~\cite[Section~4]{me:logkeil}.
\smallskip

\textit{Proof of Theorem~\ref{thm:intro:SystTslInv}.}
It suffices to follow
the proof of~\cite[Theorem~2]{me:logkeil}
in~\cite[Section~4]{me:logkeil}, 
\textit{mutadis mutandis},
replacing~\cite[Proposition~4.1]{me:logkeil} 
by Proposition~\ref{thm:add:FinalDensIncr},
arithmetic progressions by cube progressions,
and trivial solutions by the set $Z$.
The powers of $R$ differ in the two cases but this does not
affect the final bound.
Since the constants in the statement of
Proposition~\ref{thm:add:FinalDensIncr} 
were allowed to depend on $\bfP$, $\bflambda$, 
the final logarithm exponent now depends on these parameters as well.
When the algorithm stops, one obtains a cube progression
$Q = \bfv + q [L]^d$ with $\bfv \in \Z^d$ and $q \geq 1$
such that, if we write $A \cap Q = \bfv + qA'$,
there exists $(\bfn_i) \in (A')^s \smallsetminus Z$ satisfying~\eqref{eq:add:SystPols}. 
By translation-dilation invariance of $Z$ and of~\eqref{eq:add:SystPols},
it follows that $(\bfv + q\bfn_i) \in A^s \smallsetminus Z$ 
also satisfies~\eqref{eq:add:SystPols}, and the proof is complete.
\qed

\section{On epsilon-removal}
\label{sec:remov}

We fix an integer $N \geq 1$ to be thought of as large,
and an integer $k \geq 3$.
We write
\begin{align*}
	\Gamma = \{ (n,\dots,n^k),\ 1 \leq n \leq N \},
	\qquad \dsigma_\Gamma = 1_{\Gamma} \dSigma.
\end{align*}
We define the corresponding Weyl sum
\begin{align*}
	&\phantom{(\bfalpha \in \T^k)} &
	F( \bfalpha )
	&= \sum_{n \leq N} e( \alpha_1 n + \dotsb + \alpha_k n^k)
	&&(\bfalpha \in \T^k).
\end{align*}
Given a weight function $g : \Z^k \rightarrow \C$, we also define\footnote{
Note that $F_g = F_a^{(x,\dots,x^k)}$ with $a(n) = g(n,\dots,n^k)$
in the notation of the introduction, but this new definition
is more natural from a Fourier-analytic point of view.}
\begin{align}
\label{eq:remov:ExpSumWeighted}
	&\phantom{(\bfalpha \in \T^k)} &
	F_g(\bfalpha) &= \sum_{\bfn \in \Gamma} g( \mathbf{n} ) e( \bfalpha \cdot \bfn ) 
	= (g \dsigma_\Gamma)^\wedge( \bfalpha )
	&&(\bfalpha \in \T^k).
\end{align}
so that $F = (\dsigma_\Gamma)^\wedge$ in the unweighted case $g \equiv 1$.
The goal of this section is to prove an estimate of the form~\eqref{eq:intro:RestrTrunc}
for $\bfP=(x,\dots,x^k)$,
by a modification of the argument of Bourgain~\cite{Bourgain:Squares} for squares.
Hughes was the first to obtain results in this direction
in unpublished work from 2013.
We include our alternative argument\footnote{
Very recently, Wooley~\cite{Wooley:Restr} has independently obtained a similar estimate.
} for two main reasons:
to illustrate the philosophy that truncated restriction estimates
are simpler to obtain than full ones, requiring as they do 
only major arc information on
unweighted exponential sums, and also to show how
these estimates naturally extend to the multidimensional setting.

\begin{proposition}[Truncated restriction estimate for monomial curves]
\label{thm:remov:TruncRestr}
Let $k \geq 3$ and write $K = \tfrac{1}{2}k(k+1)$.
Let $\theta = 1/12$ if $k=3$, and $\theta = \max( 2^{-k}, 1 / 8 s_{k-1} )$ else.
Then, for every $\eps > 0$,
\begin{align*}
	\int\limits_{\textstyle |F_g| \geq N^{-\theta + \eps + 1/2} \| g \|_2 }	|F_g(\bfalpha)|^p 
	\ \dbfalpha
	\ \lesssim_{p,\eps}\ N^{\tfrac{p}{2} - K} \| g \|_2^p
	\qquad\text{for $p > 2K + 4$}.
\end{align*}
\end{proposition}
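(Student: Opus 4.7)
The plan is to follow the epsilon-removal strategy pioneered by Bourgain~\cite{Bourgain:Squares} for the discrete restriction problem on squares, suitably adapted to the Vinogradov curve $\bfP = (x,\dots,x^k)$. I would first normalize $\|g\|_2 = 1$ and reduce, by a dyadic layer-cake decomposition, to the single estimate $|E_\mu|\,\mu^p \lesssim_{p,\eps} N^{p/2-K}$ for each dyadic level $\mu \in [N^{1/2-\theta+\eps}, N^{1/2}]$, where $E_\mu = \{\bfalpha \in \T^k : |F_g(\bfalpha)| \sim \mu\}$. A standard major-minor arc decomposition $\T^k = \frakM(Q) \sqcup \frakm(Q)$ is then introduced, with $Q = N^\delta$ a small power of $N$ calibrated by $\theta$, and the two contributions are treated separately.

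The minor-arc contribution is the core of the argument. On $\frakm$, Weyl's inequality (for $k = 3$) or a Vinogradov-type bound exploiting the mean value estimate for systems of degree $k-1$ (for $k \geq 4$) yields a pointwise bound $|F(\bfalpha)| \lesssim_\eps N^{1 - \sigma + \eps/2}$ where $\sigma$ is the appropriate minor-arc saving underlying the definition of $\theta$. The key transfer is then to parlay this pointwise bound on the unweighted $F$ into an averaged bound on $|F_g|^{2s}$ over $\frakm$. Starting from the Mockenhaupt-Tao squaring identity $\|F_g\|_{2s}^{2s} \leq \|F\|_s^s \|g\|_2^{2s}$, valid in the circle-method range $s > 2s_k$, combined with a localization through a smooth cutoff $\psi_\frakm$ adapted to $\frakm$ and a $TT^*$-duality argument, I aim to obtain
\begin{align*}
\int_\frakm |F_g|^{2s_0}\,\dbfalpha \lesssim N^{s_0 - K - \kappa}
\end{align*}
for some integer $s_0 > K$ and some $\kappa > 0$ depending on $\sigma$. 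Chebyshev's inequality then controls $|E_\mu \cap \frakm|\,\mu^p$ with enough slack precisely when $p > 2K + 4$.

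The major-arc contribution is handled by combining the smallness $|\frakM| \ll N^{-K + O(\delta)}$ with the explicit factorization $F(\bfalpha) \approx q^{-1} S(q,\bfa) I(\bfbeta)$ near each rational and the trivial bound $|F_g| \leq N^{1/2}$, summing over arcs. The main obstacle will be the localization step on the minor arcs: the Mockenhaupt-Tao identity is inherently a global Parseval-type equality on the Fourier coefficients of $F_g^s$, and extracting a nontrivial saving from its restriction to $\frakm$ requires careful bookkeeping of the cross-terms arising from the Fourier tails of the cutoff $\psi_\frakm$. I expect to address this via a dual formulation, writing the truncated moment as a bilinear pairing involving a convolution kernel $h \ast F$ with $h$ supported on $\frakm$, and bounding this pairing via Young's inequality together with the pointwise minor-arc estimate on $F$. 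The specific numerical value of $\theta$ should emerge naturally as the optimizer in this balance between the minor-arc Weyl/Vinogradov saving and the localization loss.
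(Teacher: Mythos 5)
Your overall flavor (Bourgain-style $\eps$-removal, layer-cake reduction to level sets, Weyl/Vinogradov bounds for the unweighted $F$ on minor arcs) matches the paper, but the architecture of your decomposition creates two genuine gaps. First, you split the \emph{domain of integration} of $|F_g|^p$ into $\frakM \sqcup \frakm$ and propose to handle $\int_{\frakM}|F_g|^p$ by the measure bound $|\frakM| \lesssim N^{O(\delta)-K}$ together with the trivial bound $|F_g|\leq N^{1/2}\|g\|_2$. This yields only $N^{p/2-K+O(\delta)}$, and the loss $N^{O(\delta)}$ cannot be removed: the minor-arc Weyl input forces $\delta \gtrsim k\tau$ with $\tau$ tied to the exponent $\theta$ in the statement, so $\delta$ is bounded below and the target is $\eps$-free. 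The explicit factorization $F \approx q^{-1}S(\bfa,q)I(\bfbeta)$ does not rescue this, because it is an approximation of the \emph{unweighted} sum; $F_g$ admits no such factorization on major arcs, and obtaining weighted major-arc control directly is precisely the difficulty the method is designed to avoid. Second, your minor-arc step rests on localizing the Mockenhaupt--Tao squaring inequality to $\frakm$ to get $\int_{\frakm}|F_g|^{2s_0}\lesssim N^{s_0-K-\kappa}$; you yourself flag this as the main obstacle, and there is no straightforward way to carry it out (the inequality is a global identity on Fourier coefficients of $F_g^{s_0}$, and a cutoff $\psi_{\frakm}$ destroys it). Neither weighted estimate is actually needed.

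The paper's proof avoids both issues by never splitting the moment of $F_g$ over arcs. One works with the level sets $E_\eta=\{|F_g|\geq \eta N^{1/2}\}$ and dualizes: with $f_0 = 1_{E_\eta} F_g/|F_g|$, Parseval and Cauchy--Schwarz give the Tomas--Stein inequality $\eta^2 N |E_\eta|^2 \leq \langle f_0 \ast F, f_0\rangle$, which transfers all arithmetic input to the \emph{unweighted} kernel $F$. The major/minor decomposition is then applied to $F$ inside this bilinear form: $F = F_1 + F_2$, where $\|F_2\|_\infty \lesssim_\eps N^{1-\tau+\eps}$ by the Weyl/Vinogradov bound (absorbed since $\eta \geq N^{-\tau/2+\eps}$, which is exactly where $\theta=\tau/2$ comes from), while $|F_1|^p$ is dominated by a major-arc majorant $U_p$ built from $q^{-1}S(\bfa,q)$ and $I(\bfbeta,N)$ with $\|U_p\|_1 \leq \frakS_p\,\frakJ_p\, N^{p-K}$. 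Young's inequality then gives $|E_\eta| \lesssim N^{-K}\eta^{-2p}$, and the threshold $p>2K+4$ arises from the convergence of the singular series $\frakS_p$ for $p>K+2$ (Hua, Arkhipov et al.), not from a Chebyshev slack count on minor arcs as in your plan. Your closing idea of a bilinear pairing with a kernel $h \ast F$ bounded via Young plus the pointwise minor-arc estimate is in fact the germ of this argument, but it must be applied to the level-set inequality for $F_g$ with the full kernel $F$ decomposed, not used to localize a global moment identity to $\frakm$.
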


We refer to Definition~\ref{thm:intro:skDef} 
for the meaning of $s_k$.
We pay attention to the quality of the exponent $\theta$ above,
although this is not necessary for our applications,
and the proof could be simplified slightly by ignoring this aspect.
The previous proposition has the following more familiar consequence,
which again is not strictly required for our later argument.

\begin{corollary}[$\eps$-removal for monomial curves]
\label{thm:remov:EpsRemoval}
Let $k \geq 4$ and write $K = \tfrac{1}{2}k(k+1)$.
Suppose that, for some $q > 0$,
\begin{align*}
	\int_{\T^k} |F_g|^q \ \dm \lesssim_\eps N^{\tfrac{q}{2} - K + \eps} \| g \|_2^q
\end{align*}
for every $\eps > 0$. Then, for $p > max(2K+4,q)$,
\begin{align*}
	\int_{\T^k} |F_g|^p \ \dm \lesssim N^{\tfrac{p}{2} - K} \| g \|_2^p.
\end{align*}
\end{corollary}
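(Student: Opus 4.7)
The plan is to decompose the integral into a high-threshold part and a low-threshold part using the cutoff $T_\eps := N^{1/2 - \theta + \eps} \|g\|_2$ that appears in Proposition~\ref{thm:remov:TruncRestr}, and to treat each piece by a different estimate. For the region $\{|F_g| \geq T_\eps\}$, Proposition~\ref{thm:remov:TruncRestr} applies directly (recall $p > 2K+4$ by hypothesis) and gives
\begin{align*}
	\int_{|F_g| \geq T_\eps} |F_g|^p \ \dm
	\lesssim_{p,\eps} N^{p/2 - K} \|g\|_2^p,
\end{align*}
for any $\eps > 0$ we care to fix later. This already matches the target bound on that region.

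For the region $\{|F_g| < T_\eps\}$, the idea is to dominate $|F_g|^p$ pointwise by $T_\eps^{p-q} |F_g|^q$ and invoke the hypothesis at exponent $q$. Concretely,
\begin{align*}
	\int_{|F_g| < T_\eps} |F_g|^p \ \dm
	\leq T_\eps^{p-q} \int_{\T^k} |F_g|^q \ \dm
	\lesssim_{\eps'} T_\eps^{p-q} N^{q/2 - K + \eps'} \|g\|_2^q.
\end{align*}
Expanding $T_\eps$ gives a bound of the form
\begin{align*}
	N^{p/2 - K} \cdot N^{\eps'  + (\eps - \theta)(p-q)} \|g\|_2^p.
\end{align*}
Since $p > q$ and $\theta > 0$ is a fixed positive constant depending only on $k$, one can choose $\eps = \theta/2$ and then $\eps' = \theta(p-q)/4$, which makes the exponent $\eps' + (\eps - \theta)(p-q)$ strictly negative, in particular bounded above by $0$. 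After this choice of parameters, the implicit constants depend only on $p$ (and $k$), and combining the two pieces yields the desired $\eps$-free estimate $\int_{\T^k} |F_g|^p \ \dm \lesssim_p N^{p/2 - K} \|g\|_2^p$.

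This is a routine $\eps$-removal interpolation and there is no real obstacle; the only thing to be slightly careful about is the order in which parameters are fixed, since one must commit to a choice of $\eps$ \emph{before} applying Proposition~\ref{thm:remov:TruncRestr} and the hypothesis. The main structural point the argument highlights is that the $N^\eps$ loss in the truncation threshold of Proposition~\ref{thm:remov:TruncRestr}, rather than in its $L^p$ bound, is precisely what allows the interpolation to close with $\theta(p-q)$ to spare.
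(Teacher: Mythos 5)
Your proposal is correct and follows essentially the same route as the paper: split at the truncation threshold of Proposition~\ref{thm:remov:TruncRestr}, handle the high part by that proposition, and bound the low part by $T_\eps^{p-q}\int|F_g|^q$ together with the $L^q$ hypothesis, choosing $\eps$ (and your $\eps'$) small relative to $\theta(p-q)$. The only cosmetic differences are that the paper normalizes $\|g\|_2=1$ and uses a single $\eps$ rather than two.
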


\begin{proof}
Without loss of generality we may assume that $\| g \|_2 = 1$.
By Proposition~\ref{thm:remov:TruncRestr}, it suffices to bound the tail
\begin{align*}
	\int_{ |F_g| \leq N^{-\theta + \eps + 1/2} } |F_g|^p \ \dm
	&\leq N^{ - (p-q) ( \theta - \eps ) } N^{(p-q)/2} \int_{\T^k} |F_g|^q \ \dm
	\\
	&\lesssim_{\eps} N^{\eps - (p-q)(\theta - \eps)} N^{p/2 - K}
	\\
	&\lesssim N^{p/2 - K}. \phantom{\int_0^1}
\end{align*}
\end{proof}

We start by recalling the basics of the discrete Tomas-Stein 
argument~\cite{Bourgain:Squares,Bourgain:ParabI}.
We fix a function $g : \Z^d \rightarrow \C$,
and for a parameter $\eta > 0$ we define
\begin{align*}
	E_\eta = \{ |F_g| \geq \eta N^{1/2} \},
	\qquad
	f_0 = 1_{E_\eta} \frac{F_g}{|F_g|},
	\qquad
	f = 1_{E_\eta}.
\end{align*}
We assume that $\| g \|_2 = 1$ throughout,
so that $|F_g| \leq N^{1/2}$ by Cauchy-Schwarz in~\eqref{eq:remov:ExpSumWeighted}, 
and we can assume that $\eta$ lies in $(0,1]$.
We will bound the moments of $F_g$ of order $p \geq 1$ 
through the formula
\begin{align}
\label{eq:remov:MomentsViaLevelSets}
	\int_{a N^{1/2}}^{b N^{1/2}} |F_g|^p \ \dm
	= p N^{p/2} \int_a^b \eta^{p-1} |E_\eta| \deta
	\qquad
	\text{for}
	\qquad 0 \leq a \leq b \leq 1.
\end{align}
By definition of $f_0$ and Parseval, we have
\begin{align*}
	\eta N^{1/2} |E_\eta| 
	\leq \langle f_0 , F_g \rangle
	= \langle f_0 , ( g \dsigma_\Gamma )^\wedge \rangle
	= \langle \wh{f}_0 , g \rangle_{L^2(\dsigma_\Gamma)}.
\end{align*}
By Cauchy-Schwarz and using the assumption $\|g\|_2 = 1$, it follows that
\begin{align*}
	\eta^2 N |E_\eta|^2
	\leq \| \wh{f}_0 \|_{L^2(\dsigma_\Gamma)}^2
	= \langle \wh{f}_0 \dsigma_\Gamma , \wh{f}_0 \rangle.
\end{align*}
By another application of Parseval, we conclude that
\begin{align}
\label{eq:remov:TomasStein}
	\eta^2 N |E_\eta|^2 \leq \langle f_0 \ast F, f_0 \rangle.
\end{align}
This well-known inequality is the starting point
of our argument.

We now use the circle method to decompose the kernel $F$
into two pieces, corresponding to the usual major and minor arcs.
To bound $F$ on minor arcs we will use the
following estimates of Weyl/Vinogradov type.

\begin{proposition}
\label{thm:remov:MinArcBound}
Let $k \geq 3$ be an integer
and let $\tau,\delta$
be real numbers with $0 < \tau < \max( 2^{1-k}, 1/4s_{k-1} )$
and $\delta > k \tau$.
Then if $|F(\bfalpha)| \geq N^{1 - \tau}$
and $N$ is large enough with respect to $k,\tau,\delta$,
there exist integers $q,a_1,\dots,a_k$ 
such that $1 \leq q \leq N^\delta$,
$(a_1,\dots,a_k,q) = 1$ and
$| q \alpha_j - a_j | \leq N^{\delta - k_j}$ for $1 \leq j \leq k$.
\end{proposition}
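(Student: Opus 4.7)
The plan is to obtain a rational approximation for the leading coefficient $\alpha_k$ via a Weyl-type bound in the range $\tau < 2^{1-k}$ or a Vinogradov-type bound in the range $\tau < 1/4s_{k-1}$, and then iteratively propagate the approximation to the remaining coefficients by decomposing the sum into residue classes modulo the extracted denominator.

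For the first step, in the Weyl range I would combine the classical bound $|F(\bfalpha)| \lesssim N^{1+\eps}(q^{-1} + N^{-1} + qN^{-k})^{2^{1-k}}$ with a Dirichlet approximation of $\alpha_k$ to deduce from $|F(\bfalpha)| \geq N^{1-\tau}$ the existence of coprime integers $(a_k,q_k)$ with $q_k \leq N^{C_k\tau + \eps}$ and $|q_k\alpha_k - a_k| \leq N^{C_k\tau + \eps - k}$ for an appropriate constant $C_k$. In the Vinogradov range, one instead applies one round of Weyl differencing to $|F(\bfalpha)|^2$ and then invokes the mean value bound $J_{s_{k-1},k-1}(N) \lesssim_\eps N^{2s_{k-1} - k(k-1)/2 + \eps}$ guaranteed by the definition of $s_{k-1}$, reaching an analogous conclusion with a sharper constant $C_k$.

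For the iterative step, I would write $\alpha_k = a_k/q_k + \beta_k$ and split the outer sum into residue classes $n = q_k m + r$ for $0 \leq r < q_k$. Using partial summation in $m$ to absorb the factor $e(\beta_k(q_k m+r)^k)$, whose total variation over $m \in [0,N/q_k]$ is controlled by $|\beta_k| N^k \lesssim N^{O(\tau)}$, and then pigeonholing over $r$ and over the truncation point, one arrives at a polynomial exponential sum of degree $k-1$ over a shorter interval of length $N' = N/q_k$, of magnitude at least $(N')^{1-\tau'}$ for some $\tau'$ close to $\tau$. The coefficients $\gamma_j(r)$ of this new sum are explicit lower-triangular linear combinations of $\alpha_1,\dots,\alpha_k$, obtained by expanding $(q_k m + r)^i$ in powers of $m$. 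Applying the proposition inductively at degree $k-1$ yields approximations for $\gamma_1,\dots,\gamma_{k-1}$ with a common denominator $q'$, which can then be inverted through the triangular change of variables to recover approximations for $\alpha_1,\dots,\alpha_{k-1}$. Taking the least common multiple of $q_k$ and $q'$ as the final denominator completes the construction.

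The main obstacle is the careful bookkeeping of exponents: the hypothesis $\delta > k\tau$ provides precisely the slack needed, at each recursion level, to absorb the $\eps$-losses from the Weyl/Vinogradov application, the factor $N^{O(\tau)}$ arising from partial summation, and the denominator growth incurred by the triangular inversion and the common-denominator step. The recursion bottoms out at degree $1$, where the conclusion reduces to a direct application of Dirichlet's theorem. The admissible range at lower degrees strictly widens (since $2^{2-k}$ grows and $s_{k-2} \leq s_{k-1}$), while $\tau'$ differs from $\tau$ only by a factor $1 + O(\tau)$, so the inductive hypothesis remains applicable throughout the recursion.
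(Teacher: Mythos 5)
The paper disposes of this proposition in two lines, by quoting Baker's simultaneous approximation theorem (\cite[Theorem~5.1]{Baker:Book}) for the exponent $2^{1-k}$ and the argument of \cite[Section~8]{Wooley:EffI} (Theorem~1.6 there) for the exponent $1/4s_{k-1}$; both cited results already deliver the simultaneous approximation to \emph{all} coefficients at once. You instead attempt a self-contained proof that extracts the leading coefficient first and then recurses on the degree, and it is in this recursion that there is a genuine gap.

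The problem is your exponent bookkeeping in the iterative step, specifically the claim that the new sum has size at least $(N')^{1-\tau'}$ with $\tau'$ differing from $\tau$ ``only by a factor $1+O(\tau)$''. The losses you incur are a factor $q_k$ from pigeonholing over residue classes and a factor $1+|\beta_k|N^k$ from partial summation; with the approximation to $\alpha_k$ that Weyl's inequality (or the Vinogradov-type argument) actually provides, these are each of size up to $N^{2^{k-1}(\tau+\eps)}$ (and even with optimal major-arc information one cannot do better than roughly $N^{k\tau}$). Since the new range $N'=N/q_k$ can be as long as $N$ (when $q_k$ is small but $|\beta_k|$ is near-maximal), the guaranteed saving at degree $k-1$ is only $\tau'\approx(1+2^{k-1})\tau$, not $\tau(1+O(\tau))$. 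But the inductive hypothesis at degree $k-1$ requires $\tau'<\max(2^{2-k},1/4s_{k-2})$, and for $\tau$ near the top of the claimed range this fails badly: e.g.\ $(1+2^{k-1})\tau$ exceeds $2^{2-k}$ as soon as $\tau>2^{2-k}/(1+2^{k-1})\approx 2^{3-2k}$, and for $\tau$ close to $2^{1-k}$ the lower bound $N^{1-\tau-2^{k-1}\tau}$ you retain for the new sum is weaker than the trivial bound. The same arithmetic kills the Vinogradov branch, since $(1+k)\tau$ already exceeds $1/4s_{k-2}$ when $\tau$ is near $1/4s_{k-1}$. The hypothesis $\delta>k\tau$ does not supply the needed slack: $\delta$ only governs the quality of the final approximation, not the degradation of the largeness hypothesis fed into the induction, so your recursion proves the proposition only for a much smaller range of $\tau$ (roughly $\tau\lesssim 2^{-2k}$), whereas the paper applies it with $\tau$ essentially at the endpoint $\max(2^{1-k},1/4s_{k-1})$. (A secondary strain on the budget, which you acknowledge only in passing, is that undoing the triangular change of variables multiplies denominators by powers of $q_k$, which is delicate when $\delta$ is barely larger than $k\tau$.) To recover the stated range one has to extract the approximations to all coefficients essentially simultaneously from the single largeness hypothesis, as in Baker's Chapter 4--5 machinery or Wooley's Theorem~1.6, rather than spending the hypothesis once per coefficient.
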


\begin{proof}
When $\tau = 2^{1-k}$, this is~\cite[Theorem~5.1]{Baker:Book},
with parameters $M = 1$, $P = N^{1 - \tau}$
and choosing the $\eps$ from that theorem small enough
so that $k\tau + \eps \leq \delta$.
When $\tau = 1/4s_{k-1}$, the proposition follows
from the reasoning used in the proof of~\cite[Theorem~1.6]{Wooley:EffI}
in~\cite[Section~8]{Wooley:EffI}.
\end{proof}

We adopt the convention that
any implicit or explicit constant throughout the
section may depend on $k$,
and we assume that $N$
is large enough with respect to $k$
when needed by the argument, 
without further indication.
(Since $\| F_a \|_\infty \leq N$, we may certainly assume that $N$ is larger
than any absolute constant in proving 
Proposition~\ref{thm:remov:TruncRestr}).
We set $\tau = \frac{1}{6}$ if $k=3$ and 
$\tau = \max( 2^{1-k}, 1/4s_{k-1} )$ if $k \geq 4$,
in accordance with the Weyl-type estimates we intend to use.
We fix a small quantity $\eps_0 \in (0,\tau)$
and a constant $\delta = k(\tau - \eps_0)$.
For $k \geq 4$, we can use the bound 
$s_{k-1} \geq \tfrac{1}{2}k(k-1)$ to deduce that
\begin{align*}
	\delta	
	< k\tau
	\leq \max\Big( \frac{k}{2^{k-1}}, \frac{k}{4s_{k-1}} \Big) 
	\leq \max\Big( \frac{k}{2^{k-1}} , \frac{1}{2(k-1)} \Big) 
	\leq \frac{1}{2},
\end{align*}
and the same bound holds for $k=3$ trivially.
We define the major and minor arcs in a standard fashion by
\begin{align}
	\notag
	\frakM(\bfa,q) &= \{ \bfalpha \in \T^k : 
	\| \alpha_j - a_j/q \| \leq q^{-1} N^{\delta - j} 
	\ (1 \leq j \leq k) \}, 
	\\
	\label{eq:remov:MajorArcs}
	\frakM &=	\bigsqcup_{ q \leq N^\delta } \bigsqcup_{\substack{ \bfa \in [q]^k \,: \\ (\bfa,q) = 1 }} \frakM(\bfa,q),
	\quad\quad \frakm = \T^k \smallsetminus \frakM.
\end{align}
It is easy to check that we have indeed a 
disjoint union in~\eqref{eq:remov:MajorArcs} when $\delta < 1/2$.
We use the fundamental domain $\frakU = (\tfrac{1}{2} N^{-\delta}, 1 + \tfrac{1}{2} N^{-\delta}]^k$
containing the intervals $\bfa/q + \prod_j [-q^{-1}N^{\delta-j},q^{-1} N^{\delta-j}]$
with $1 \leq q \leq N^\delta$ and $\bfa \in [q]^k$.

We first obtain a set of estimates for
the exponential sum $F$ on minor and major arcs.
This involves the Gaussian sum and oscillatory integral defined respectively by
\begin{align}
	\notag
	&\phantom{(a \in \Z_q^k)} &
	S( \bfa , q ) &= \sum_{u \bmod q} e_q ( a_1 u + \dotsb + a_k u^k )
	&&(\bfa \in \Z_q^k),
	\\
	\label{eq:remov:OscIntg}
	&\phantom{(\bfbeta \in \R)} &
	I( \bfbeta , N ) &= \int_0^N e( \beta_1 x + \dotsb + \beta_k x^k ) \dx
	&&(\bfbeta \in \R^k).
\end{align}

\begin{proposition}
\label{thm:remov:WeylSumBounds}
	For $\bfalpha \in \frakU$, we have
	\begin{align*}
		|F( \bfalpha )| 
		= \begin{cases}
		O_{\eps_0}( N^{ 1 - \tau + 2\eps_0 } )
		& \text{if $\bfalpha \in \frakm$}, 
		\\
		q^{-1} S(\bfa,q) I( \bfalpha - \bfa /q , N )
		+ O_{\eps_0}( N^{ 1 - \tau + 2\eps_0 } )
		& \text{if $\bfalpha \in \frakM(\bfa,q) \subset \frakM$}.
		\end{cases}
	\end{align*}
\end{proposition}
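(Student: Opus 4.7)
The plan is to treat the minor-arc bound and the major-arc approximation separately via the standard dichotomy of the circle method.

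For the minor-arc bound I argue by contrapositive: if $\bfalpha \in \frakU$ satisfies $|F(\bfalpha)| \geq N^{1-\tau+2\eps_0}$, then $\bfalpha \in \frakM$. This is a direct application of Proposition~\ref{thm:remov:MinArcBound} with parameters $\tau' = \tau - 2\eps_0$ and $\delta' = \delta = k(\tau - \eps_0)$. One checks the hypotheses: $\tau'$ lies in $(0, \max(2^{1-k}, 1/(4 s_{k-1})))$ by the assumption $\eps_0 \in (0, \tau)$ together with the case analysis defining $\tau$, and $\delta' - k\tau' = k \eps_0 > 0$. The conclusion produces integers $1 \leq q \leq N^\delta$ and $(\bfa, q) = 1$ with $|q \alpha_j - a_j| \leq N^{\delta - j}$; since $\delta < 1/2$ (verified below), the major arcs are disjoint inside $\frakU$ and one deduces $\bfalpha \in \frakM(\bfa, q) \subset \frakM$. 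The contrapositive gives the claimed bound on $\frakm$.

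For the major-arc approximation at $\bfalpha = \bfa/q + \bfbeta \in \frakM(\bfa, q)$, with $|\beta_j| \leq q^{-1} N^{\delta - j}$, I perform the standard residue-class decomposition $n = qm + u$ with $u \in \{1, \ldots, q\}$. Using the congruence $\sum_j a_j n^j \equiv \sum_j a_j u^j \pmod{q}$, we factor
\begin{align*}
	F(\bfalpha) = \sum_{u=1}^{q} e_q\!\Big( \sum_j a_j u^j \Big) \sum_{0 \leq m \leq (N-u)/q} e\!\Big( \sum_j \beta_j (qm+u)^j \Big).
\end{align*}
The change of variables $y = qx + u$ identifies $q^{-1} \int_0^{(N-u)/q} e(\sum_j \beta_j (qx+u)^j) \dx$ with $q^{-1} \int_u^N e(\sum_j \beta_j y^j) \, \mathrm{d}y$, which differs from $q^{-1} I(\bfbeta, N)$ by $O(1)$. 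Comparing the inner sum to the integral by Euler-Maclaurin, using that the phase derivative is $O(N^{\delta - 1})$ on $[0, N/q]$ (from $\sum_j j|\beta_j|N^{j-1} \lesssim q^{-1} N^{\delta - 1}$ multiplied by $q$), costs $O(1 + q^{-1} N^\delta)$ per residue class. Summing over $u$ and extracting the Gauss sum $S(\bfa, q)$ produces a total error of $O(q + N^\delta) = O(N^\delta)$, since $q \leq N^\delta$.

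It remains to absorb $N^\delta$ into $O(N^{1-\tau+2\eps_0})$, which reduces, for $\eps_0$ small, to the inequality $(k+1) \tau < 1$. This is verified case by case: for $k = 3$ we have $4\tau = 2/3 < 1$; for $k \geq 4$ one has $2^{1-k} < 1/(k+1)$, and $1/(4 s_{k-1}) \leq 1/(2 k(k-1)) < 1/(k+1)$ using the trivial Vinogradov lower bound $s_{k-1} \geq k(k-1)/2$. The same inequalities guarantee $\delta < 1/2$ used above. The argument presents no genuine obstacle; its content lies entirely in the already-cited Proposition~\ref{thm:remov:MinArcBound}, while the major-arc part is routine bookkeeping with the parameter $\delta$.
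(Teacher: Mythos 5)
Your proof is correct, and the minor-arc half is exactly the paper's argument: the contrapositive application of Proposition~\ref{thm:remov:MinArcBound} with $\tau \leftarrow \tau - 2\eps_0$ and $\delta \leftarrow k(\tau-\eps_0)$. Where you diverge is on the major arcs: the paper simply invokes a Poisson-summation-based approximation formula (\cite[Lemma~4.4]{Baker:Book}), whose error is $O(q^{1-1/k+\eps})$ and is absorbed since $q \leq N^\delta$ with $\delta < \tfrac12 < 1-\tau$, whereas you rederive the approximation from scratch by the classical residue-class decomposition $n = qm+u$ plus an Euler--Maclaurin/first-derivative comparison, arriving at the cruder error $O(q + N^{\delta}) = O(N^\delta)$. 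Your error term is weaker in its $q$-dependence, but as you verify (via $(k+1)\tau < 1$, or simply via the paper's earlier observation that $\delta < 1/2 \leq 1-\tau$) it still fits inside $O_{\eps_0}(N^{1-\tau+2\eps_0})$, so the proposition as stated follows; what the paper's citation buys is a sharper, self-contained-in-$q$ error that is not actually needed here, while your route buys independence from Baker's lemma at the cost of a page of routine bookkeeping. One small shared caveat: both your argument and the paper's implicitly need $\eps_0 < \tau/2$ so that $\tau - 2\eps_0 > 0$ when feeding it into Proposition~\ref{thm:remov:MinArcBound}; this is harmless since $\eps_0$ is declared small, but it is worth stating when you write ``$\eps_0 \in (0,\tau)$.''
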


\begin{proof}
Consider a frequency $\bfalpha \in \T^k$.
If $|F(\bfalpha)| \geq N^{1 - (\tau - 2\eps_0)}$ and $N$ is large enough,
then Proposition~\ref{thm:remov:MinArcBound} with $\tau \leftarrow \tau - 2\eps_0$ and
$\delta \leftarrow k(\tau - \eps_0)$ shows that $\bfalpha \in \frakM$.
Therefore $|F| \lesssim_{\eps_0} N^{1 - \tau + 2\eps_0}$ on $\frakm$.

When $\bfalpha \in \frakM(\bfa,q)$
with $1 \leq q \leq N^\delta$,
$\bfa \in [q]^k$ and $(\bfa,q) = 1$, we have, 
for every $j \in [k]$,
\begin{align*}
	| \alpha_j - a_j / q |
	\leq q^{-1} N^{\delta - j}
	\leq (2k^2)^{-1} q^{-1} N^{1 - j},	
\end{align*}
where we used the fact that $\delta < 1$ and $N$ is large
in the last inequality.
By a standard Poisson-based approximation formula~\cite[Lemma~4.4]{Baker:Book},
we obtain the desired approximation of $F$, noting that
$q^{1 - 1/k + \eps}
\lesssim N^{1 - \tau + 2\eps_0} $
for $q \leq N^\delta$ and $\eps$ small enough.
\end{proof}

In light of the previous proposition,
we define a majorant function $U_p : \frakU \rightarrow \C$ by
\begin{align}
	\label{eq:remov:MajorantU}
	U_p = \sum_{q \leq N^\delta} \ \sum_{\substack{ \bfa \in [q]^k \,: \\ (\bfa,q) = 1 }}
	|q^{-1} S(\bfa,q)|^p \cdot 1_{\frakM(\bfa,q)} \cdot \tau_{-\bfa/q} |I( \cdot, N)|^p.
\end{align}
Our bounds on the exponential sum $F$ can be phrased
in the following form,
where we wrote $\eps = 2 \eps_0$.

\begin{proposition}
\label{thm:remov:KernelDcpU}
We have a decomposition $F = F_1 + F_2$ with
\begin{align*}
	\| F_2 \|_{\infty} \lesssim_\eps N^{1 - \tau + \eps}
	\quad\text{and}\quad
	|F_1|^p \leq U_p.
\end{align*}
\end{proposition}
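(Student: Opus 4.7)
The plan is to read the decomposition directly off Proposition \ref{thm:remov:WeylSumBounds}. On each major arc $\frakM(\bfa,q)$ the Weyl sum is well approximated by the product $q^{-1} S(\bfa,q) I(\bfalpha - \bfa/q, N)$ up to a minor-arc-sized error, and on the minor arcs $\frakm$ the sum is itself minor-arc-sized. This suggests defining $F_1$ to be the "main term" on the major arcs and zero elsewhere, and $F_2 := F - F_1$ to collect all errors.

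First I would define, for $\bfalpha \in \frakU$,
\begin{align*}
F_1( \bfalpha ) &= \sum_{q \leq N^\delta} \sum_{\substack{ \bfa \in [q]^k \\ (\bfa,q) = 1 }}
1_{\frakM(\bfa,q)}( \bfalpha ) \cdot q^{-1} S(\bfa,q)\, I( \bfalpha - \bfa/q, N ),
\qquad F_2 = F - F_1.
\end{align*}
Since the major arcs $\frakM(\bfa,q)$ are disjoint (as verified just after \eqref{eq:remov:MajorArcs} using $\delta < 1/2$), for each $\bfalpha$ in the major arcs at most one summand in $F_1(\bfalpha)$ is nonzero, and $F_1 \equiv 0$ on $\frakm$. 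Setting $\eps = 2\eps_0$, Proposition \ref{thm:remov:WeylSumBounds} immediately yields $|F_2(\bfalpha)| = |F(\bfalpha)| \lesssim_\eps N^{1-\tau+\eps}$ for $\bfalpha \in \frakm$, and $|F_2(\bfalpha)| = |F(\bfalpha) - q^{-1} S(\bfa,q) I(\bfalpha - \bfa/q, N)| \lesssim_\eps N^{1-\tau+\eps}$ for $\bfalpha \in \frakM(\bfa,q)$. Taking the sup gives the desired bound on $\|F_2\|_\infty$.

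For the pointwise bound on $|F_1|^p$, I observe that by disjointness of the major arcs,
\begin{align*}
|F_1(\bfalpha)|^p = \sum_{q \leq N^\delta} \sum_{\substack{ \bfa \in [q]^k \\ (\bfa,q) = 1 }}
1_{\frakM(\bfa,q)}( \bfalpha ) \cdot |q^{-1} S(\bfa,q)|^p \cdot | I( \bfalpha - \bfa/q, N ) |^p,
\end{align*}
and the right-hand side is precisely $U_p(\bfalpha)$ by the definition \eqref{eq:remov:MajorantU}, using that $\tau_{-\bfa/q}|I(\cdot,N)|^p(\bfalpha) = |I(\bfalpha - \bfa/q, N)|^p$. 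On $\frakm$ both sides vanish, so in fact $|F_1|^p = U_p$ on all of $\frakU$, which is stronger than the required inequality.

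There is no real obstacle in this step: the argument is essentially bookkeeping. The only mild technical point is to confirm that the major arcs are genuinely disjoint, so that $F_1$ is unambiguously defined and $|F_1|^p$ decomposes as a sum rather than incurring cross terms — but this is guaranteed by $\delta < 1/2$, established just above the proposition.
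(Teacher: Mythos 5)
Your proposal is correct and follows essentially the same route as the paper: define $F_1$ as the major-arc main term $q^{-1}S(\bfa,q)\,\tau_{-\bfa/q}I(\,\cdot\,,N)$ cut off by $1_{\frakM(\bfa,q)}$, set $F_2 = F - F_1$, and deduce both bounds from Proposition~\ref{thm:remov:WeylSumBounds} together with the disjointness of the arcs. Your extra observation that in fact $|F_1|^p = U_p$ pointwise is a harmless sharpening of the same bookkeeping.
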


\begin{proof}
We naturally define
\begin{align*}
	F_1 &= \sum_{q \leq N^\delta} \ \sum_{(\bfa,q) = 1} 
			q^{-1} S( \bfa, q ) \tau_{-\bfa/q} I( \,\cdot\,,N) \cdot 1_{\frakM(\bfa,q)}
\end{align*}
and $F_2 = F - F_1$.
Since the arcs $\frakM(\bfa,q)$ are disjoint for $q \leq N^\delta$, $(\bfa,q)=1$,
the required bounds follow from Proposition~\eqref{thm:remov:WeylSumBounds}.
\end{proof}

Our argument is a modification of Bourgain's~\cite{Bourgain:Squares}, 
in which we directly use $L^1$ bounds on the major arc majorant $U_p$
to obtain $L^\infty \rightarrow L^1$ estimates for
the operator of convolution with $U_p$.
In fact, we show that the $L^1$ norm of $U_p$ 
is controlled by the following local moments,
where we define $I(\bfbeta) = I(\bfbeta,1)$:
\begin{align}
	\label{eq:remov:SgSeriesIntg}
	\frakS_p
	= \sum_{q \geq 1} \sum_{\substack{ \bfa \in [q]^k \,: \\ (\bfa,q) = 1 }} |q^{-1}S(\bfa,q)|^p,
	\qquad
	\frakJ_p = \int_{\R^k} |I(\bfxi)|^p \dbfxi.
\end{align}

\begin{lemma}
\label{thm:remov:LoneBoundU}
For $p > 0$, we have
\begin{align*}
	\int_{\frakU} |U_p| \ \dm
	\leq \frakS_p \cdot \frakJ_p \cdot N^{p - K}.
\end{align*}
\end{lemma}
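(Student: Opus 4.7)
The plan is a direct unfolding of the definition of $U_p$: the major arcs $\frakM(\bfa,q)$ with $q \leq N^\delta$ and $(\bfa,q)=1$ are disjoint (as already used in the proof of Proposition~\ref{thm:remov:KernelDcpU}, since $\delta < 1/2$), so the integral over $\frakU$ splits as a sum of local integrals, one per arc:
\begin{align*}
    \int_{\frakU} U_p \,\dm
    = \sum_{q \leq N^\delta} \sum_{\substack{\bfa \in [q]^k \\ (\bfa,q)=1}} |q^{-1}S(\bfa,q)|^p \int_{\frakM(\bfa,q)} |I(\bfalpha - \bfa/q,N)|^p \,\dbfalpha.
\end{align*}

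Next, I translate by $\bfa/q$ and rescale. On each arc, the translated variable $\bfbeta = \bfalpha - \bfa/q$ ranges over the box $\prod_{j=1}^k [-q^{-1}N^{\delta-j}, q^{-1}N^{\delta-j}]$. A simple change of variables $x = Nu$ in~\eqref{eq:remov:OscIntg} yields the homogeneity identity
\begin{align*}
    I(\bfbeta, N) = N \cdot I(\beta_1 N, \beta_2 N^2, \ldots, \beta_k N^k),
\end{align*}
so setting $\xi_j = \beta_j N^j$, with Jacobian $\dbfbeta = N^{-K}\,\dbfxi$ (using $K = 1+2+\cdots+k$), we obtain
\begin{align*}
    \int_{\frakM(\bfa,q)} |I(\bfalpha - \bfa/q, N)|^p \,\dbfalpha
    = N^{p-K} \int_{\prod_j [-q^{-1}N^\delta, q^{-1}N^\delta]} |I(\bfxi)|^p \,\dbfxi
    \leq N^{p-K} \frakJ_p,
\end{align*}
where in the final step I simply extended the domain of integration to $\R^k$.

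Inserting this bound back and extending the outer summation from $q \leq N^\delta$ to all $q \geq 1$, I conclude
\begin{align*}
    \int_{\frakU} U_p \,\dm
    \leq N^{p-K} \frakJ_p \sum_{q \geq 1} \sum_{\substack{\bfa \in [q]^k \\ (\bfa,q)=1}} |q^{-1}S(\bfa,q)|^p
    = \frakS_p \cdot \frakJ_p \cdot N^{p-K},
\end{align*}
as required. There is no real obstacle here; the argument is entirely a disjointness-plus-rescaling computation, and the parameter $\delta < 1/2$ plays no role beyond guaranteeing the disjointness already recorded after~\eqref{eq:remov:MajorArcs}.
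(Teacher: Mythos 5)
Your proof is correct and follows essentially the same route as the paper: exploit the disjointness of the arcs, translate and rescale via $I(\bfbeta,N)=N\cdot I(\beta_1 N,\dots,\beta_k N^k)$ with Jacobian $N^{-K}$, extend the $\bfxi$-integration to $\R^k$ and the $q$-summation to all of $\N$ to produce $\frakJ_p$ and $\frakS_p$. The only (immaterial) difference is the order of operations: the paper first bounds the whole integral by $\frakS_p\int_{\R^k}|I(\bfbeta,N)|^p\,\dbfbeta$ and rescales once at the end, whereas you rescale arc by arc before summing.
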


\begin{proof}
From the definition~\eqref{eq:remov:MajorantU} of $U_p$, we obtain effortlessly
\begin{align}
\label{eq:remov:LoneBoundUInterm}
	\int_{\frakU} |U_p| \ \dm
	\leq \frakS_p \cdot \int_{\R^k} |I(\bfbeta,N)|^p \dbfbeta.
\end{align}
By a linear change of variables in~\eqref{eq:remov:OscIntg}, we have
\begin{align*}
	I( \bfbeta, N )
	&= N \int_0^1 e( \beta_1 N x + \dotsb + \beta_k N^k x^k ) \dx \\
	&= N \cdot I( \beta_1 N , \dots , \beta_k N^k ). \phantom{\int_0^1} 
\end{align*}
By another linear change of variables, we find that
\begin{align*}
	 \int_{\R^k} |I( \bfbeta, N )|^p
	 = N^p \int_{\R^k} |I( \beta_1 N , \dots , \beta_k N^k )|^p \dbfbeta
	 = N^{p - K} \int_{\R^k} |I(\bfxi)|^p \dbfxi,
\end{align*}
and this can be inserted 
into~\eqref{eq:remov:LoneBoundUInterm} to finish the proof.
\end{proof}

\begin{proposition}
\label{thm:remov:LevelSetSg}
Suppose that $p > 0$ is such that $\frakS_p < \infty$ and $\frakJ_p < \infty$.
Then
\begin{align*}
	|E_\eta| \lesssim_p N^{-K} \eta^{-2p}
	\qquad\text{if $\eta \geq N^{-\tau/2 + \eps}$}
\end{align*}
when $N$ is large enough with respect to $\eps$.
\end{proposition}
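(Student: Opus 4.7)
The plan is to apply the Tomas--Stein inequality~\eqref{eq:remov:TomasStein} and then exploit the decomposition $F = F_1 + F_2$ from Proposition~\ref{thm:remov:KernelDcpU}, handling the minor arc remainder $F_2$ by absorption into the left-hand side and the major arc majorant $F_1$ by Hölder and Young together with Lemma~\ref{thm:remov:LoneBoundU}.

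More precisely, from the assumption $\| g \|_2 = 1$ and the inequality~\eqref{eq:remov:TomasStein}, I would first split
\begin{align*}
\eta^2 N |E_\eta|^2 \leq \langle f_0 \ast F_1, f_0 \rangle + \langle f_0 \ast F_2, f_0 \rangle.
\end{align*}
For the $F_2$ term, I would use $|f_0| \leq \mathbf{1}_{E_\eta}$, hence $\| f_0 \|_1 \leq |E_\eta|$, combined with $\| F_2 \|_\infty \lesssim_\eps N^{1 - \tau + \eps}$ from Proposition~\ref{thm:remov:KernelDcpU}, yielding
\begin{align*}
|\langle f_0 \ast F_2, f_0 \rangle| \leq \| F_2 \|_\infty \| f_0 \|_1^2 \lesssim_\eps N^{1 - \tau + \eps} |E_\eta|^2.
\end{align*}
Provided $\eta \geq N^{-\tau/2 + \eps}$ (the hypothesis of the proposition), this is a factor of at least $N^{\eps}$ smaller than the left-hand side $\eta^2 N |E_\eta|^2$, and may be absorbed for $N$ large.

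For the $F_1$ contribution I would apply Hölder's inequality with conjugate exponents $p, p'$ followed by Young's inequality:
\begin{align*}
|\langle f_0 \ast F_1, f_0 \rangle|
\leq \| f_0 \ast F_1 \|_p \, \| f_0 \|_{p'}
\leq \| F_1 \|_p \, \| f_0 \|_1 \, \| f_0 \|_{p'}
\leq \| F_1 \|_p \, |E_\eta|^{1 + 1/p'}.
\end{align*}
The pointwise bound $|F_1|^p \leq U_p$ from Proposition~\ref{thm:remov:KernelDcpU} together with Lemma~\ref{thm:remov:LoneBoundU} gives $\| F_1 \|_p^p \leq \frakS_p \, \frakJ_p \, N^{p - K}$, so that $\| F_1 \|_p \lesssim_p N^{1 - K/p}$ under the finiteness assumptions on $\frakS_p$ and $\frakJ_p$. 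Combining all of this,
\begin{align*}
\tfrac{1}{2} \eta^2 N |E_\eta|^2 \lesssim_p N^{1 - K/p} |E_\eta|^{2 - 1/p},
\end{align*}
and solving for $|E_\eta|$ yields $|E_\eta| \lesssim_p N^{-K} \eta^{-2p}$, as desired.

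The step I expect to require the most care is the absorption in the $F_2$ contribution, where the tolerance in $\eta$ is exactly dictated by the minor arc Weyl exponent $\tau$ from Proposition~\ref{thm:remov:MinArcBound}; everything else is a clean Young/Hölder computation once the decomposition and the $L^1$ bound on the majorant $U_p$ are in hand.
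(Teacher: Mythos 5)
Your proposal is correct and follows essentially the same route as the paper: starting from the Tomas--Stein inequality~\eqref{eq:remov:TomasStein}, absorbing the $F_2$ contribution using $\|F_2\|_\infty \lesssim_\eps N^{1-\tau+\eps}$ and the hypothesis $\eta \geq N^{-\tau/2+\eps}$, and bounding the $F_1$ contribution via H\"older and Young together with $|F_1|^p \leq U_p$ and Lemma~\ref{thm:remov:LoneBoundU}, which yields $|E_\eta| \lesssim_p N^{-K}\eta^{-2p}$ exactly as in the paper's argument.
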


\begin{proof}
Starting from the inequality~\eqref{eq:remov:TomasStein},
and using the decomposition of Proposition~\ref{thm:remov:KernelDcpU}
and Hölder's inequality,
we obtain
\begin{align*}
	\eta^2 N |E_\eta|^2
	&\leq
	\langle |F_1| \ast f , f \rangle
	+ \| F_2 \|_\infty \| f \|_1^2
	\\
	&\leq
	\| |F_1| \ast f \|_p \| f \|_{p'}
	+ 	O_\eps( N^{1 - \tau + \eps} |E_\eta|^2 ).
\end{align*}
For $\eta \geq N^{-\tau/2 + \eps}$,
applying also Young's inequality yields
\begin{align*}
	\eta^2 N |E_\eta|^2
	&\lesssim \| F_1 \|_{p} \| f \|_1 \| f \|_{p'}
	\\
	&\leq \| U_p \|_1^{1/p} |E_\eta|^{2-\frac{1}{p}},
\end{align*}
so that $|E_\lambda| \lesssim \| U_p \|_1 N^{-p} \eta^{-2p}$,
and we obtain the desired bound upon invoking
Lemma~\ref{thm:remov:LoneBoundU}.
\end{proof}

In the case of an even integer exponent $p = 2s$,
the two local moments in~\eqref{eq:remov:SgSeriesIntg}
are called respectively the singular series and the
singular integral in Tarry's problem,
and the problem of their convergence
has been solved respectively 
by Hua~\cite{Hua:SgSeriesIntg}
and Arkhipov et al.~\cite{ACK:SgIntg}.
The following is~\cite[Theorems~1.3 and~2.4]{ACK:Book},
and the method of proof used there
allows in fact for real exponents $p$.


\begin{proposition}
\label{thm:remov:CvSgSeriesIntg}
Let $p > 0$, $k \geq 2$ and $K = \tfrac{1}{2}k(k+1)$.
The singular integral $\frakJ_p$ 
converges for $p > K + 1$,
and the singular series $\frakS_p$
converges for $p > K + 2$.
\end{proposition}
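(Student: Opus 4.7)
The plan is to follow the classical analyses of Hua (for the series) and Arkhipov-Chubarikov-Karatsuba (for the integral), and to observe that the underlying pointwise estimates extend without modification to real exponents $p$.

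For the singular integral $\frakJ_p$, the main ingredient is a van der Corput-type bound $|I(\bfxi)| \lesssim \min_{1 \leq j \leq k}(1 + |\xi_j|)^{-1/j}$, obtained by $j$-fold integration by parts in the direction where the $j$-th derivative of the phase dominates. Partitioning $\R^k$ according to the index $j_0$ attaining this minimum, the region where $(1+|\xi_{j_0}|)^{1/j_0} \geq (1+|\xi_i|)^{1/i}$ for all $i$ has $\xi_i$-volume (over $i \neq j_0$) essentially $(1+|\xi_{j_0}|)^{(K - j_0)/j_0}$, and integrating $(1+|\xi_{j_0}|)^{-p/j_0}$ against $d|\xi_{j_0}|$ with this weight yields convergence for $p > K$. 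Recovering the sharp threshold $p > K+1$ requires a slightly refined pointwise estimate accounting for the transition regions where no single coordinate dominates the others, as carried out in \cite{ACK:Book}.

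For the singular series $\frakS_p$, multiplicativity of the Gauss sum in $q$ factors the series as an Euler product
\begin{align*}
	\frakS_p = \prod_{\ell \ \text{prime}} \bigg( 1 + \sum_{h \geq 1} \sum_{\substack{\bfa \in [\ell^h]^k \\ (\bfa, \ell^h) = 1}} |\ell^{-h} S(\bfa, \ell^h)|^p \bigg).
\end{align*}
Hua's lemma gives $|\ell^{-h} S(\bfa, \ell^h)| \lesssim_\eps \ell^{-h/k + \eps h}$ for reduced $\bfa$, which combined with the trivial count of $\ell^{hk}$ such tuples yields local convergence only in the range $p > k(k+1) = 2K$. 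The sharper threshold $p > K + 2$ is obtained via a stratification of the inner sum by the $\ell$-adic valuations of $a_k, a_{k-1}, \dots$, extracting additional savings from Hua's finer estimates ; the local factor at $\ell$ then becomes $1 + O(\ell^{-1-\delta})$ once $p > K + 2$, and the Euler product converges.

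The passage to real $p$ is immediate because both $|I(\bfxi)|$ and $|q^{-1} S(\bfa, q)|$ are bounded by $1$, so the integrand and summand are monotone in $p$, and all the pointwise bounds above hold for any real exponent without any interpolation step. The main obstacle is not this passage but the extraction of the sharp thresholds $K + 1$ and $K + 2$, which requires careful tracking of lower-order savings beyond the naive applications of van der Corput and Hua, and for which I would defer to the treatment in \cite{ACK:Book}.
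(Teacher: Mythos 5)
The paper does not actually prove this proposition: it is quoted from Arkhipov--Chubarikov--Karatsuba \cite[Theorems~1.3 and~2.4]{ACK:Book} (Hua for the series, ACK for the integral), with the remark that their method, being based on pointwise and level-set estimates, goes through for real exponents $p$; an alternative proof of the integral statement via Drury's restriction theorem for curves is also mentioned. Measured against that, your sketch of the singular series is fine as far as it goes (multiplicativity gives the Euler product, Hua's bound gives the crude range $p>2K$, and the passage to $p>K+2$ is deferred to \cite{ACK:Book}, exactly as the paper does), but your treatment of the singular integral contains a genuine error. The claimed bound $|I(\bfxi)| \lesssim \min_{1\leq j\leq k}(1+|\xi_j|)^{-1/j}$ is false: the $j$-th derivative of the phase $\xi_1x+\dotsb+\xi_kx^k$ involves \emph{all} of $\xi_j,\dots,\xi_k$, so van der Corput gives no decay in $|\xi_j|$ alone. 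Already for $k=2$, taking $\xi_2=-\xi_1/2$ puts a stationary point inside $[0,1]$ and $|I(\bfxi)|\asymp|\xi_1|^{-1/2}$, far larger than $(1+|\xi_1|)^{-1}$. The conclusion you draw from it, convergence of $\frakJ_p$ for $p>K$, is in fact false: by the sharp form of the ACK theorem the singular integral \emph{diverges} for $p\leq K+1$, so the threshold $K+1$ is not a ``slightly refined'' correction to your computation but is produced precisely by the stationary-phase regions your pointwise bound ignores. The only bound of this type that is true is $|I(\bfxi)|\lesssim(1+|\bfxi|)^{-1/k}$ (the estimate the paper itself uses in the multidimensional section), and it only yields $p>k^2$, so the whole convergence analysis near the critical exponent has to be the finer one in \cite{ACK:Book}.

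A second, smaller point: your justification of the passage to real $p$ by monotonicity of $t\mapsto |I|^t$, $|q^{-1}S|^t$ only transfers convergence \emph{upward} from the even integers $2s$ treated classically, so it cannot reach real $p$ in the gap between $K+1$ (resp.\ $K+2$) and the next even integer, which is exactly the range the proposition needs (e.g.\ $k=3$, $K+1=7$, real $p\in(7,8)$). What actually justifies real exponents is the observation you make in passing, and which the paper relies on: the proofs in \cite{ACK:Book} proceed through pointwise bounds and measure/counting estimates on level sets of $|I|$ and $|S(\bfa,q)|$, and these are indifferent to whether $p$ is an even integer. So the correct shape of the argument is: cite (or reproduce) the ACK level-set analysis and note it is valid verbatim for real $p$ --- not a monotonicity reduction, and not the van der Corput computation as you set it up.
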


In fact, the restriction estimates of Drury~\cite{Drury:RestrCurves}
for curves yield a distinct proof of the convergence of the singular integral.
We now have all the ingredients needed to derive
a truncated restriction estimate.
\smallskip

\textit{Proof of Proposition~\ref{thm:remov:TruncRestr}.}
Let $\theta = \tau/2$ and $\nu > 0$.
Using the integration formula~\eqref{eq:remov:MomentsViaLevelSets},
and invoking Proposition~\ref{thm:remov:LevelSetSg} with $p \leftarrow K+2+\nu$
and Proposition~\ref{thm:remov:CvSgSeriesIntg}, 
we obtain
\begin{align*}
	\int_{|F_g| \geq N^{-\theta + \eps + 1/2}} |F_g|^p \ \dm
	&\asymp_p N^{p/2} \int_{N^{-\theta + \eps}}^1 \eta^{p-1} |E_\eta| \deta \\
	&\lesssim_p N^{p/2 - K} \int_0^1 \eta^{p - 2(K+2+\nu) - 1} \deta.
\end{align*}
This last quantity is $O_p(N^{p/2 - K})$ for $p > 2K + 4$ and $\nu$ small enough.
\qed
\smallskip

We comment briefly on how the $\eps$-removal lemma
we have just proven
extends to the multidimensional setting.
Since we only need major arc information and
any inequality of Weyl type, we rely essentially
on work of Arkhipov et al.~\cite{ACK:Book}
from the decade 1970--1980.
We pick a finite subset $E$ of $\N_0^d \smallsetminus \{0\}$ 
and consider the set
\begin{align*}
	S = \{\, ( n_1^{j_1} \cdots n_d^{j_d} )_{ (j_1,\dots,j_d) \in E } \,:\, n_1,\dots,n_d \in [N] \,\}
\end{align*}
corresponding to the reduced system of polynomials
$\bfP = (\bfx^{\bfj},\,\bfj \in E)$
of degree $k = \max_{\bfj \in E} |\bfj|$ and rank $r = |E|$.
The exponential sums~\eqref{eq:intro:ExpSums} become
\begin{align}
\label{eq:remov:WeylSumMultidim}
	F_a^{(\bfP)}( \bfalpha ) 
	= \sum_{ \bfn \in [N]^d } a(\bfn) e\bigg( \sum_{\bfj \in E} \alpha_{\bfj} \bfn^{\bfj} \bigg),
	\quad
	F^{(\bfP)}( \bfalpha ) 
	= \sum_{ \bfn \in [N]^d } e\bigg( \sum_{\bfj \in E} \alpha_{\bfj} \bfn^{\bfj} \bigg)
	\qquad ( \bfalpha \in \T^r ),
\end{align}
when $a : \Z^d \rightarrow \C$ is a certain weight function.
We define the corresponding Gauss sum and oscillatory integral by
\begin{align*}
	S( \bfa,q )
	= \sum_{ \bfu \in \Z_q^d } e_q\bigg( \sum_{\bfj \in E} a_{\bfj} \bfu^{\bfj} \bigg)
	\quad (\bfa \in \Z_q^r),	
	\qquad
	I( \bfbeta )
	= \int_{ [0,1]^d } e\bigg( \sum_{\bfj \in E} \beta_{\bfj} \bfx^{\bfj} \bigg) \dbfx
	\quad (\bfbeta \in \R^r).
\end{align*}
By the multidimensional analogue of Hua's bound~\cite[Theorem~2.6]{ACK:Book}
and a standard van der Corput lemma~\cite[Corollary~2.3]{Parissis:Thesis},
we have
\begin{align}
	\label{eq:remov:MultiHuaBound}
	&\phantom{(q \geq 1, (\bfa,q) = 1)} &
	|S(\bfa,q)| 
	&\lesssim_\eps q^{d - 1/k + \eps}
	&&(q \geq 1, (\bfa,q) = 1),
	\\
	\label{eq:remov:MultiVdC}
	&\phantom{(\bfbeta \in \R^t)} &
	|I(\bfbeta)| 
	&\lesssim (1 + |\bfbeta|)^{-1/k}
	&&(\bfbeta \in \R^r).
\end{align}
For $p > 0$, define the local moments
\begin{align*}
	\frakS_p 
	= \sum_{q \geq 1} \sum_{\substack{ \bfa \in [q]^r \,: \\ (\bfa,q) = 1 }} |q^{-d} S(\bfa,q)|^p,
	\qquad	
	\frakJ_p 
	= \int_{\R^r} |I(\bfbeta)|^p \dbfbeta.
\end{align*}
By inserting the bounds~\eqref{eq:remov:MultiHuaBound}
and~\eqref{eq:remov:MultiVdC} in these expressions,
and using spherical coordinates to bound the second one,
we find that $\frakS_p < \infty$ for $p > k (r+1)$
and $\frakJ_p < \infty$ for $p > kr$.
Note also that estimates of Weyl type 
for the unweighted exponential sum
in~\eqref{eq:remov:WeylSumMultidim}
are available from early work of Arkhipov et al.~\cite[Theorem~3]{ACK:Weyl},
but for our purposes it is more expedient to quote the work of 
Parsell~\cite[Lemma~5.3, Theorem~5.5]{Parsell:MultidimVino}.
Using these ingredients as a replacement for 
Proposition~\ref{thm:remov:WeylSumBounds},
it is a straightforward deduction to obtain
the following multidimensional analogue
of Proposition~\ref{thm:remov:TruncRestr}.

\begin{proposition}[Truncated restriction estimate for monomial surfaces]
\label{thm:remov:TruncRestrMultidim}
Let $d \geq 1$ and let $E$ be a finite non-empty 
subset of $\N_0^d \smallsetminus \{ 0 \}$.
Consider the system of polynomials $\bfP = (\bfx^{\bfj},\,\bfj \in E)$
of dimension $d$, rank $r = |E|$, degree $k = \max_{\bfj \in E} |\bfj|$ 
and weight $K = \sum_{\bfj \in E} |\bfj|$.
There exists $\theta = \theta(d,r,k) > 0$ such that,
for $p > 2k(r+1)$,
\begin{align*}
	\int_{ |F_a^{(\bfP)}| \geq N^{d/2-\theta} \|a\|_2 } |F_a^{(\bfP)}|^p \ \dm 
	\lesssim_p N^{\tfrac{p}{2} - K} \| a \|_2^p.
\end{align*}
\end{proposition}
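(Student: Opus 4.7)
The plan is to adapt the Tomas--Stein argument leading to Proposition~\ref{thm:remov:TruncRestr} to the multidimensional setting, using the multidimensional Weyl-type estimates of Parsell and the Hua/van der Corput bounds stated just before the proposition to handle minor and major arcs respectively. Throughout I may assume $\| a \|_2 = 1$ by homogeneity, so that $|F_a^{(\bfP)}| \leq N^{d/2}$ by Cauchy--Schwarz, and I may restrict attention to level sets $E_\eta = \{ |F_a^{(\bfP)}| \geq \eta N^{d/2} \}$ for $\eta \in (0,1]$. Setting $f_0 = 1_{E_\eta} \overline{F_a^{(\bfP)}} / |F_a^{(\bfP)}|$ and proceeding exactly as in~\eqref{eq:remov:TomasStein}, with $(\bfn,\bfn')$ ranging over $\Gamma = \bfP([N]^d)$ instead of a curve, I obtain
\begin{align*}
	\eta^2 N^d |E_\eta|^2 \leq \langle f_0 \ast F^{(\bfP)}, f_0 \rangle.
\end{align*}

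Next I would set up the circle method. Fix $\tau>0$ from a Weyl-type bound of Parsell~\cite[Lemma~5.3, Theorem~5.5]{Parsell:MultidimVino} ensuring that $|F^{(\bfP)}(\bfalpha)| \geq N^{d - \tau}$ forces $\bfalpha$ to lie in a major arc set of the form $\frakM = \bigsqcup_{q \leq N^\delta} \bigsqcup_{(\bfa,q)=1} \frakM(\bfa,q)$, for $\delta$ chosen small enough that the union is disjoint. On these major arcs, the standard Poisson-based approximation formula, together with the Hua-type bound~\eqref{eq:remov:MultiHuaBound}, yields an approximation $F^{(\bfP)}(\bfalpha) = q^{-d} S(\bfa,q)\,I(\bfalpha - \bfa/q, N) + O(N^{d-\tau+\eps})$ with acceptable error, where $I(\cdot, N)$ is the natural $N$-scaled oscillatory integral associated to $\bfP$. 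This yields a decomposition $F^{(\bfP)} = F_1 + F_2$ with $\| F_2 \|_\infty \lesssim N^{d - \tau + \eps}$ and $|F_1|^p \leq U_p$, where $U_p$ is the exact multidimensional analogue of~\eqref{eq:remov:MajorantU}.

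I would then bound $\|U_p\|_1$ by $\frakS_p \cdot \frakJ_p \cdot N^{dp - K}$ by the same change of variables as in Lemma~\ref{thm:remov:LoneBoundU}, with the scaling $I(\bfbeta, N) = N^d \cdot I(\beta_{\bfj} N^{|\bfj|})_{\bfj \in E}$ producing the factor $N^{dp - K}$. With the convergence $\frakS_p < \infty$ for $p > k(r+1)$ and $\frakJ_p < \infty$ for $p > kr$ already recorded in the excerpt, I can run the level-set argument of Proposition~\ref{thm:remov:LevelSetSg}: for $\eta \geq N^{-\tau/2 + \eps}$, the minor-arc tail $\| F_2 \|_\infty \| f \|_1^2$ is absorbed by the left-hand side, and Hölder and Young's inequalities leave
\begin{align*}
	|E_\eta| \lesssim_p \| U_p \|_1 \cdot N^{-dp} \eta^{-2p} \lesssim_p N^{-K} \eta^{-2p},
\end{align*}
valid for any $p > k(r+1)$.

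Finally, I integrate via the layer-cake formula~\eqref{eq:remov:MomentsViaLevelSets} (adapted so that $N$ becomes $N^d$): taking $\theta = \tau/2$ and applying the $|E_\eta|$ bound with $p_0 = k(r+1) + \nu$,
\begin{align*}
	\int_{ |F_a^{(\bfP)}| \geq N^{d/2-\theta+\eps} } |F_a^{(\bfP)}|^p \, \dm
	\lesssim_p N^{dp/2 - K} \int_0^1 \eta^{p - 2p_0 - 1} \, \deta,
\end{align*}
which is $O_p(N^{dp/2-K})$ for $p > 2k(r+1)$ upon sending $\nu \to 0$. The main obstacle is not conceptual but rather a careful bookkeeping task: one must ensure that the quantitative Weyl bound of Parsell produces an admissible exponent $\tau > 0$ (and hence $\theta > 0$) depending only on $d,r,k$, and that the resulting $\delta$ keeps the major arcs disjoint — this is where any slack in the one-dimensional argument (where sharper bounds of Wooley and the specific structure of the monomial curve were available) is lost, which explains the somewhat larger range $p > 2k(r+1)$ compared with the one-dimensional threshold $p > 2K+4$.
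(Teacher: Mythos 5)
Your argument is correct and is essentially the paper's own proof: the paper likewise extends the one-dimensional Tomas--Stein/level-set argument verbatim, using Parsell's Weyl-type estimates on the minor arcs and the Hua-type Gauss sum bound together with the van der Corput bound to get $\frakS_p < \infty$ for $p > k(r+1)$ and $\frakJ_p < \infty$ for $p > kr$, exactly as you describe. The only cosmetic discrepancy is that the stated threshold $N^{d/2-\theta}\|a\|_2$ carries no $\eps$; absorb your $N^{\eps}$ loss by shrinking $\theta$ slightly (e.g.\ take $\theta = \tau/2 - \eps$ for a fixed small $\eps$), which is harmless since the proposition only asserts the existence of some $\theta(d,r,k) > 0$.
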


With a few more linear algebraic considerations
it is possible to obtain an absolutely analogous result
for general translation-dilation invariant systems
(where $d,r,k,K$ retain their usual meaning),
and we choose not to elaborate further on this point,
which does not require any essentially new idea.
Note that the above proposition misses the complete
supercritical range $p > 2K/d$, but it suffices
for our applications given the state of knowledge~\cite{PPW:Multidim} 
on multidimensional Vinogradov mean values.

\section{Additive equations of large degree}
\label{sec:large}

In this section we derive
Theorems~\ref{thm:intro:SystMonomMultidim},~\ref{thm:intro:SystMonom}
and~\ref{thm:intro:SystPols} on systems of equations of large degree.
We start by establishing a few simple facts about
translation-dilation invariant systems of polynomials.

\begin{lemma}
\label{thm:large:InjAndIneq}
Suppose that $\bfP$ is a translation-dilation invariant
system of $r$ polynomials of dimension $d$ and degree $k$.
Then $\bfx \mapsto \bfP(\bfx)$ is injective and $r \geq k$.
\end{lemma}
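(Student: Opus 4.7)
The plan is to work with the $\Q$-vector subspace $V = \mathrm{span}_\Q(1,P_1,\ldots,P_r)$ of $\Q[x_1,\ldots,x_d]$ and exploit two invariance properties that follow directly from the relation $\bfP(\bfx+\bfxi) = c_0(\bfxi) + C(\bfxi)\bfP(\bfx)$. Reading this identity with $\bfxi$ fixed shows $\tau_\bfxi P_j = c_{0,j}(\bfxi) + \sum_{i \leq j} C_{ji}(\bfxi)\,P_i \in V$, so $V$ is translation-invariant; differentiating the identity in $\xi_i$ at $\bfxi = 0$ gives $\partial_i P_j = \partial_i c_{0,j}(0) + \sum_\ell \partial_i C_{j\ell}(0)\,P_\ell \in V$, so $V$ is closed under partial derivatives, and by iteration under all higher partials.

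For the bound $r \geq k$, I would use the lower unitriangular form of $C$ to prove by induction on $j$ that $\deg P_j \leq j$. The $j$-th row of the translation-dilation identity reads
\begin{align*}
P_j(\bfx+\bfxi) - P_j(\bfx) = c_{0,j}(\bfxi) + \sum_{i < j} C_{ji}(\bfxi)\, P_i(\bfx).
\end{align*}
For $j = 1$ the right-hand side is $\bfx$-independent, so $P_1$ must be affine. Assuming inductively that $\deg P_i \leq i$ for every $i < j$, the right-hand side has $\bfx$-degree at most $j-1$, and since the first $\bfxi$-difference of a polynomial of $\bfx$-degree $m$ has $\bfx$-degree $m-1$, this forces $\deg P_j \leq j$. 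Maximizing over $j$ yields $k = \deg \bfP \leq r$.

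For injectivity, suppose $\bfP(\bfz) = \bfP(\bfw)$ and set $\bfu = \bfz - \bfw$. Translation-invariance of $V$ upgrades the equality $f(\bfz) = f(\bfw)$ (valid for every $f \in V$) to $f(\bfy + \bfu) = f(\bfy)$ for all $\bfy \in \Q^d$, by applying it to $\tau_\bfy f \in V$. Polynomial extension then gives $f(\bfy + t\bfu) = f(\bfy)$ identically in $t$, and differentiating at $t = 0$ yields $\bfu \cdot \nabla f \equiv 0$ for every $f \in V$. To conclude $\bfu = 0$, I would pick a monomial $\bfx^\bfbeta$ of degree $k$ in some $P_{j_0}$ and apply $\partial^{\bfbeta - \bfe_\ell}$ to obtain an affine element of $V$ with nonzero $x_\ell$-coefficient; iterating over the $d$ coordinate directions supplied by the dimension condition, together with the hierarchical constraints of the lower unitriangular relation, should yield affine polynomials in $V$ whose gradients span $\Q^d$, forcing $\bfu = 0$. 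The hardest part is precisely this final gradient-span step: the partial-derivative construction pins down the $\ell$-th component of a gradient but not the other components, so assembling $d$ genuinely linearly independent gradients requires carefully combining the closure properties of $V$ with the lower unitriangular hierarchy of $\bfP$.
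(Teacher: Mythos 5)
Your argument for the inequality $r \geq k$ is correct and is essentially the paper's: the paper likewise Taylor-expands the $j$-th row of the translation identity and gets $k_j \leq j$ by induction on $j$; the only imprecision in your version is the claim that the first $\bfxi$-difference of a degree-$m$ polynomial has degree exactly $m-1$ -- for a fixed $\bfxi$ the degree can drop further, so one should say (as the paper does, taking $\bfxi = \bfe_i$ with $x_i$ occurring in a top-degree monomial of $P_j$) that \emph{some} $\bfxi$ achieves degree exactly $m-1$, which is all the induction needs. Your reduction of injectivity is also the paper's in substance: from $\bfP(\bfz)=\bfP(\bfw)$ the paper deduces $\bfP(\bfz+\bfxi)=\bfP(\bfw+\bfxi)$ for all $\bfxi$ and hence $\partial^{\bfalpha}P_j(\bfz)=\partial^{\bfalpha}P_j(\bfw)$ for all $\bfalpha$ and $j$, which is equivalent to your statement that $\bfu\cdot\nabla f\equiv 0$ for every $f$ in the translation-closed span $V$.

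The genuine gap is the step you yourself flag: deducing $\bfu=0$. You propose to manufacture $d$ affine elements of $V$ with linearly independent linear parts from derivatives $\partial^{\bfbeta-\bfe_\ell}P_{j_0}$, the closure properties of $V$, and the unitriangular hierarchy, but you do not carry this out -- and in fact it cannot be carried out from those inputs alone. Take $\bfP=(x_1-x_2)$ in $d=2$ variables: it is homogeneous, satisfies the translation-dilation identity with $C(\bfxi)=[1]$ and $c_0(\bfxi)=\xi_1-\xi_2$, every variable occurs in a monomial with nonzero coefficient (so the system has dimension $2$ in the sense used in the statement), and $V=\mathrm{span}_\Q(1,x_1-x_2)$ is translation- and derivative-closed; yet $\bfu=(1,1)$ satisfies $\bfu\cdot\nabla f\equiv0$ for all $f\in V$, and the map is not injective. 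So the gradient-span property you need is strictly stronger than ``each variable appears in some monomial,'' and no refinement of the $\partial^{\bfbeta-\bfe_\ell}$ construction will extract it in general. To be fair, this is precisely the point where the paper's own proof is a one-sentence assertion (from the agreement of all partial derivatives at $\bfz$ and $\bfw$, plus the dimension hypothesis, it simply concludes $\bfz=\bfw$), so your write-up is no further from a complete argument than the printed one; but as a standalone proof it is incomplete, and to close it you must either invoke a stronger nondegeneracy reading of ``dimension $d$'' (no nonzero direction $\bfu$ annihilates every $P_j$, equivalently the affine elements of $V$ have linear parts spanning $\Q^d$) or verify that property directly for the systems to which the lemma is actually applied -- the monomial systems $(\bfx^{\bfj},\,1\leq|\bfj|\leq k)$ and the parabola system contain the coordinate forms $x_1,\dots,x_d$, for which the span condition is immediate.
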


\begin{proof}
We first show that $k \leq r$.
Recall from~\cite[Section~2]{PPW:Multidim} that $\bfP = (P_1,\dots,P_r)$
is a translation-dilation invariant system
when the polynomials $P_1,\dots,P_r$ are homogeneous
of degree $k_i \geq 1$, and when there exist
integer polynomials $c_{j \ell}(\bfxi)$ in $d$ variables
for $1 \leq j \leq r$, $0 \leq \ell < j$ such that 
\begin{align*}
	&\phantom{(\bfx,\, \bfxi \in \Z^d)} &
	P_j(\bfx + \bfxi) - P_j(\bfx) &= c_{j0}(\bfxi) + \sum_{\ell=1}^{j-1} c_{j \ell}(\bfxi) P_\ell(\bfx)
	&&(\bfx,\, \bfxi \in \Z^d).
\end{align*}
Performing a Taylor expansion of the left-hand side at $\bfx$,
and choosing $\bfxi = \bfe_i$ for an index $i \in [d]$ 
such that $x_i$ appears in a monomial of highest degree of $P_j$,
we may ensure that the left-hand side 
is a polynomial of degree $k_j - 1$ in $\bfx$, while the right-hand side is
a linear combination of polynomials of degrees $0,k_1,\dots,k_{j-1}$.
Consequently, we obtain the recursive bounds
$k_1 \leq 1$ and $k_j \leq \max_{\ell < j} k_\ell + 1$ for $j \geq 2$, 
so that upon iterating we derive $k_j \leq j$ for $1 \leq j \leq r$, 
and in particular $k = \max k_j \leq r$ as desired.

Next, note that the system of equations
$\bfP(\bfx) - \bfP(\bfy) = 0$
in variables $\bfx,\bfy \in \Z^d$ is translation-invariant.
Consider two fixed integers $\bfx,\bfy \in \Z^d$
such that $\bfP(\bfx) = \bfP(\bfy)$.
Then we have $\bfP(\bfx + \bfxi) = \bfP(\bfy + \bfxi)$
for every $\bfxi \in \Z^d$, and therefore for every $\bfxi \in \R^d$
by considering polynomials in the variable $\bfxi$.
By Taylor expansion at $\bfx$ and $\bfy$, we find that
$\partial^{\bfalpha} P_j(\bfx) = \partial^{\bfalpha} P_j(\bfy)$ for every 
$\bfalpha \in \N_0^d$ and every $j \in [r]$.
Since we assumed that at least one polynomial $P_j$
involves the variable $x_i$ for each $i \in [d]$,
it follows that $\bfx = \bfy$.
\end{proof}

Using an interpolation argument of Parsell et al.~\cite[Section~11]{PPW:Multidim},
we also find that the number of subset-sum solutions is always negligible
when a bound of the correct order of magnitude is available
for the relevant unweighted exponential sum.

\begin{lemma}
\label{thm:large:TrivSols}
Let $s \geq 3$ and $\lambda_1,\dots,\lambda_s \in \Z \smallsetminus \{0\}$
be such that $\lambda_1 + \dotsb + \lambda_s = 0$.
Suppose that $\bfP$ is a translation-dilation
invariant system of $r$ polynomials of dimension $d$,
degree $k$ and weight $K$.
Suppose that, for an integer $s > 2K/d$,
\begin{align*}
	\| F^{(\bfP)} \|_s^s \lesssim_\eps N^{ds - K + \eps}.
\end{align*}
Then the number of subset-sum solutions $\bfx \in [N]^d$
to~\eqref{eq:intro:SystPols} is bounded up to a constant factor
by $N^{ds - K - c}$, where $c = c(s,r,d,k) > 0$.
\end{lemma}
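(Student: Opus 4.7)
The plan is to follow the interpolation argument of Parsell, Prendiville and Wooley. Since the coefficients $\lambda_1,\dots,\lambda_s$ and the integer $s$ are fixed, there are only finitely many partitions $[s] = E_1 \sqcup \dotsb \sqcup E_\ell$ with $\ell \geq 2$ such that $\sum_{i \in E_j} \lambda_i = 0$ for every $j \in [\ell]$. It therefore suffices to fix such a partition and bound the corresponding number of subset-sum solutions by $O(N^{ds - K - c})$ for some positive $c = c(s,r,d,k)$.

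For a fixed partition, the subset-sum count factors as $\prod_{j=1}^\ell N_{E_j}$, where by orthogonality
\begin{align*}
	N_{E_j} = \int_{\T^r} \prod_{i \in E_j} F^{(\bfP)}(\lambda_i \bfalpha) \, \dbfalpha.
\end{align*}
I would then estimate each $N_{E_j}$ by combining three ingredients. First, writing $s_j = |E_j|$, an iterated Cauchy--Schwarz argument yields $N_{E_j}^{2^m} \leq \| F^{(\bfP)} \|_{2^m s_j}^{2^m s_j}$ for every $m \geq 0$. Choosing $m_j$ as the least integer with $2^{m_j} s_j \geq s$, and interpolating the moment hypothesis with the trivial pointwise bound $\| F^{(\bfP)} \|_\infty \leq N^d$, produces
\begin{align*}
	\| F^{(\bfP)} \|_{2^{m_j} s_j}^{2^{m_j} s_j}
	\lesssim_\eps N^{d \cdot 2^{m_j} s_j - K + \eps},
	\qquad
	N_{E_j} \lesssim_\eps N^{d s_j - K/2^{m_j} + \eps}.
\end{align*}
Second, for parts of size two, the injectivity of $\bfP$ provided by Lemma~\ref{thm:large:InjAndIneq} forces $\bfx_i = \bfx_{i'}$ for the two indices of $E_j$, giving the sharper bound $N_{E_j} \leq N^d$. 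Third, for larger parts one may freely combine the above with the Jensen bound $\| F^{(\bfP)} \|_{s_j} \leq \| F^{(\bfP)} \|_s$ available on the probability space $\T^r$.

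The crux is then to assemble these per-part bounds into a product beating the baseline $N^{ds - K}$ by a polynomial factor. When the partition contains a part of size $2$, the injectivity bound beats the interpolation bound by a multiplicative factor of $N^{d - 2K/s}$, which is a strictly positive power of $N$ precisely because of the hypothesis $s > 2K/d$; this alone closes the argument. When all parts have size at least three, the constraint $\ell \geq 2$ combined with the rounding in the exponents $2^{m_j}$ (which are powers of two, not the optimal real parameters $s/s_j$) forces the total saving $\sum_j K/2^{m_j}$ to strictly exceed $K$ in every case, by an amount bounded below in terms of $s,r,d,k$ alone. The main obstacle, and the bulk of the technical work, is carrying out this case analysis uniformly and extracting a single positive constant $c(s,r,d,k)$.
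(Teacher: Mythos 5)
Your reduction to finitely many partitions, the factorization $\prod_j N_{E_j}$, and the use of injectivity for parts of size two all match the paper's strategy, but the case where every part has size at least three contains a genuine error. Your claim that the total saving $\sum_j K/2^{m_j}$ strictly exceeds $K$ is false, and the rounding works against you, not for you: the defining property $2^{m_j} s_j \geq s$ gives $2^{m_j} \geq s/s_j$, hence $K/2^{m_j} \leq K s_j/s$ and $\sum_j K/2^{m_j} \leq \sum_j K s_j/s = K$, with strict inequality unless every $s/s_j$ is a power of two. Concretely, for $s=10$ and parts of sizes $3$ and $7$ one gets $m_1=2$, $m_2=1$ and a total saving of $\tfrac34 K$, which fails even to recover the baseline $N^{ds-K}$. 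Replacing the dyadic bounds by Jensen, $\| F^{(\bfP)} \|_{s_j} \leq \| F^{(\bfP)} \|_s$ (which is always at least as good, for the same reason $2^{m_j} \geq s/s_j$), only brings you to $N^{ds-K+\eps}$: interpolation between $L^\infty$ and $L^s$, or Jensen, can never beat the baseline by a power of $N$, because the moment hypothesis is being used at $L^s$ and nothing smaller. So in this case your argument cannot close, and the difficulty is not mere bookkeeping of a case analysis.

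The missing ingredient is the $L^2$ information, used for \emph{every} part rather than only for parts of size exactly two. Injectivity of $\bfP$ (Lemma~\ref{thm:large:InjAndIneq}) and orthogonality give $\| F^{(\bfP)} \|_2^2 = N^d$, which is strictly smaller than the Jensen-implied bound $N^{2d-2K/s+\eps}$ precisely when $s > 2K/d$. The paper bounds each part by $\| F^{(\bfP)} \|_{s_j}^{s_j}$ (H\"older together with $1$-periodicity to remove the dilations by $\lambda_i$) and then interpolates between $L^2$ and $L^s$, namely $\| F^{(\bfP)} \|_{s_j}^{s_j} \leq \big( \| F^{(\bfP)} \|_s^s \big)^{(s_j-2)/(s-2)} \big( \| F^{(\bfP)} \|_2^2 \big)^{(s-s_j)/(s-2)}$; since $\ell \geq 2$ forces $2 \leq s_j < s$ for every part, the small $L^2$ factor enters with positive weight in each factor, and summing exponents yields a bound of the shape $N^{ds-K+\eps} \big( N^{2K-ds} \big)^{(\ell-1)/(s-2)}$, a genuine power saving under $s>2K/d$. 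Your size-two case does survive (injectivity saves $d$ against the Jensen saving $2K/s$ for that part, provided the remaining parts are handled by Jensen rather than by your dyadic bound), but the all-parts-at-least-three case cannot be repaired without bringing the $L^2$ bound into the interpolation in this way.
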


\begin{proof}
By injectivity of $\bfP$ (Lemma~\ref{thm:large:InjAndIneq})
and orthogonality we have immediately $\| F^{(\bfP)} \|_2^2 = N^d$.
Consider now a partition $[s] = E_1 \bigsqcup \dots \bigsqcup E_\ell$
with $\ell \geq 2$ and $\sum_{i \in E_j} \lambda_i = 0$ for all $j \in [\ell]$.
Since the $\lambda_i$ are nonzero, we have $m_j = |E_j| \in [2,s)$ for every $j \in [\ell]$.
We write $\calN_{(E_i)}(N)$ for the number of solutions
$\bfn_i \in [N]^d$ to the equations $\sum_{i \in E_j} \lambda_i \bfP(\bfn_i) = 0$, $j \in [\ell]$.
By orthogonality, Hölder's inequality and $1$-periodicity, we have
\begin{align*}
	\calN_{(E_i)}(N)
	&= \textstyle \prod_{j=1}^\ell \int_{\T^r} \prod_{i \in E_j} F(\lambda_i \bfalpha) \dbfalpha
	\\
	&\leq \textstyle \prod_{j=1}^\ell \prod_{i \in E_j} 
	\big[ \int_{\T^r} |F(\lambda_i \bfalpha)|^{m_j} \dbfalpha \big]^{\tfrac{1}{m_j}}
	\\
	&= \textstyle \prod_{j=1}^\ell \| F \|_{m_j}^{m_j}.
\end{align*}
Interpolating between $L^s$ and $L^2$, 
and observing that $\sum_{j=1}^\ell m_j = s$, we deduce that
\begin{align*}
	\calN_{(E_i)}(N)
	&\leq \textstyle \prod_{j=1}^\ell \big( \| F^{(\bfP)} \|_s^s \big)^{\tfrac{m_j-2}{s-2}} 
	\big( \| F^{(\bfP)} \|_2^2 \big)^{\tfrac{s-m_j}{s-2}}
	\\
	&\lesssim_\eps (N^{ds-K+\eps})^{\tfrac{s-2\ell}{s-2}} (N^d)^{\tfrac{\ell s - s}{s-2}}
	\\
	&= ( N^{ds - K + \eps} )^{1 - \tfrac{2(\ell-1)}{s-2}} (N^d)^{\tfrac{s(\ell-1)}{s-2}}.
\end{align*}
With further rearranging, we obtain
\begin{align*}
	\calN_{(E_i)}(N) 
	&\lesssim_\eps N^{ds - K + \eps} ( N^{2K - ds - 2\eps} )^{\tfrac{\ell-1}{s-2}}.
\end{align*}
Since $\ell \geq 2$, this last term is at most $O(N^{ds-K-c})$ 
for a certain $c = c(s,r,d,k) > 0$ when $s > 2K/d$, which is precisely our assumption.
\end{proof}

With these preliminaries in place, and from the
results of Sections~\ref{sec:add} and~\ref{sec:remov},
we can recover the theorems of the introduction on systems of large degree.
\smallskip

\textit{Proof of Theorem~\ref{thm:intro:SystMonom}.}
We want to apply Theorem~\ref{thm:intro:SystTslInv}
with $\bfP = (x,\dots,x^k)$ and $Z$ defined as the set
of projected or subset-sum solutions to~\eqref{eq:intro:SystMonom}.
We write $F = F^{(x,\dots,x^k)}$ and $F_a = F_a^{(x,\dots,x^k)}$,
and we let $\calN(N)$ denote the number of solutions 
$n_1,\dots,n_s \in [N]$ to~\eqref{eq:intro:SystMonom}.
Via the circle method~\cite[Section 9]{Wooley:EffI},
and assuming the existence
of nonsingular real and $p$-adic solutions to~\eqref{eq:intro:SystMonom},
one can obtain an asympotic formula 
of the form $\calN(N) \sim \frakS \cdot \frakJ \cdot N^{s - K}$ 
for $k \geq 3$ and $s > 2s_k$,
for certain constants $\frakS > 0$ and $\frakJ > 0$.

On the other hand, the projected solutions to~\eqref{eq:intro:SystMonom} 
are those such that $n_1 = \dots = n_s$,
and there are at most $N = N^{s - K - (s-K-1)}$ such solutions,
where $s-K-1 \geq 1$ since we have assumed $s > 2s_k \geq 2K$.
By Lemma~\ref{thm:large:TrivSols} and the 
estimate $\| F \|_{2s}^{2s} \lesssim N^{2s -  K + \eps}$
for $s \geq 2s_k \geq 2K$, 
the number of subset-sum solutions is also $O(N^{s-K-c})$ for a certain $c=c(s,k) >0$.
Therefore the assumption~\eqref{eq:intro:NumberSolsBounds}
is satisfied for $s > 2s_k$.

Finally, the restriction estimate~\eqref{eq:intro:RestrEpsFullLinfty}
is valid for any $s'' \geq 2s_k$,
via the bound
\begin{align*}
	&\phantom{(s \in \N)} &
	\| F_a \|_{2s}^{2s} 
	&\leq \| F \|_{2s}^{2s} \| a \|_\infty^{2s}
	= J_{s,k}(N) \| a \|_\infty^{2s}
	&&(s \in \N).
\end{align*}
The estimate~\eqref{eq:intro:RestrTruncAgain}, on the other hand,
holds for some $\theta > 0$ and any $s' > 2K + 4$,
by Proposition~\ref{thm:remov:TruncRestr}.
Therefore, the assumptions of Theorem~\ref{thm:intro:SystTslInv}
are satisfied for $s > \max(2K + 4,2s_k)$,
and indeed for $s > 2K + 4$
upon using the result $s_k = 2K$ from~\cite{BDG:VinoMeanValue}.
\qed
\smallskip

\textit{Proof of Theorems~\ref{thm:intro:SystMonomMultidim}
and~\ref{thm:intro:SystPols}.}
We start by proving the more general Theorem~\ref{thm:intro:SystPols},
again by verifying the assumptions of Theorem~\ref{thm:intro:SystTslInv}.
For $s > 2r(k+1)$ and $s > K + d^2$, 
the work of Parsell et al.~\cite[Section~11]{PPW:Multidim}
shows that the assumptions~\eqref{eq:intro:NumberSolsBounds}
hold with a constant $\omega = \omega(s,r,d,k)$
when $Z$ is defined as the set of projected solutions
or subset-sum solutions to~\eqref{eq:intro:SystPols}
(one may instead use Lemma~\ref{thm:large:TrivSols}
and~\cite[Theorem~2.1]{PPW:Multidim} to bound the number of subset-sum solutions).

Assumption~\eqref{eq:intro:RestrEpsFullLinfty} holds for $s'' > 2r(k+1)$
by~\cite[Theorem~2.1]{PPW:Multidim} and using once more the inequality
\begin{align*}
	&\phantom{(s \in \N)} &
	\| F_a^{(\bfP)} \|_{2s}^{2s} 
	&\leq \| F^{(\bfP)} \|_{2s}^{2s} \| a \|_\infty^{2s}
	= J_s(N,\bfP) \| a \|_\infty
	&&(s \in \N).	
\end{align*}
The truncated restriction estimate~\eqref{eq:intro:RestrTruncAgain} 
holds for $s' > 2k(r+1)$ by the natural generalization of 
Proposition~\ref{thm:remov:TruncRestrMultidim}
to arbitrary reduced translation-dilation invariant systems $\bfP$, 
which we chose not to state.
Since $r \geq k$ by Lemma~\ref{thm:large:InjAndIneq}, 
we have $2r(k+1) \geq 2k(r+1)$, and therefore
this does not impose any additional constraint.
After choosing $\max(2r(k+1),K^2+d) < s'' < s' <s$, 
Theorem~\ref{thm:intro:SystTslInv} applies and
gives the desired conclusion.
In the special case $\bfP = (\bfx^{\bfj},\, 1 \leq |\bfj| \leq k)$,
it is explained in~\cite[Section~11]{PPW:Multidim}
that $2r(k+1) \geq K + d^2$, so that the assumption
$s > K + d^2$ becomes redundant, and Theorem~\ref{thm:intro:SystMonomMultidim} follows.
In that case the required estimate~\eqref{eq:intro:RestrTruncAgain} 
was explicitely stated as Proposition~\ref{thm:remov:TruncRestrMultidim},
taking $E = \{ \bfj \in \N_0^d \,:\, 1 \leq |\bfj| \leq k \}$.
\qed
\smallskip

We conclude this section with a small remark, which is that the
usual argument~\cite[Section~7]{Vaughan:Book} 
by which one obtains a lower bound of the correct
order of magnitude for $J_{s,k}(N)$ also shows that a system of equations
of the form~\eqref{eq:intro:SystPols} with symmetric
coefficients has the expected density of solutions 
in any subset of $[N]^d$.
This phenomenon was first observed by 
Rusza in the linear case~\cite[Theorem~3.2]{Ruzsa:Roth}.

\begin{proposition}
\label{thm:large:SystPolsSym}
Let $t \geq 1$ and
$\mu_1,\dots,\mu_t \in \Z \smallsetminus \{0\}$.
Suppose that $\bfP$ is a system of $r$ polynomials
having dimension $d$, degree $k$ and weight $K$.
Suppose that $A$ is a subset of $[N]^d$ of density $\delta$
and let $\calN(A,\bfP,\bfmu)$ denote the number of solutions 
$\bfn_i,\bfm_i \in A$ to the system of equations
\begin{align}
\label{eq:large:SystPolsSym}
	\mu_1 \bfP(\bfn_1) + \dotsb + \mu_t \bfP(\bfn_t)
	= 
	\mu_1 \bfP(\bfm_1) + \dotsb + \mu_t \bfP(\bfm_t)
\end{align}
in $s = 2t$ variables.
Then
\begin{align}
\label{eq:large:SystPolsSymLowerBound}
	\calN(A,\bfP,\bfmu) \gtrsim_{\bfP,\bfmu} \delta^s N^{ds - K}.
\end{align}
In particular, there exist constants $C(\bfP,\bfmu) > 0$
and $c(s,r,d,k) > 0$ such that
if $\delta \geq C(\bfP,\bfmu) N^{-c(s,r,d,k)}$, then $A$
contains a solution to~\eqref{eq:large:SystPolsSym},
which is neither a projected nor a subset-sum solution, 
provided also that
\begin{itemize}
	\item $\bfP = (x,\dots,x^k)$ and $s \geq 2s_k + 2$, or
	\item $\bfP = (\bfx^{\bfj},\, 1 \leq |\bfj| \leq k)$ and $s \geq 2r(k+1) + 2$, or
	\item $\bfP$ is an arbitrary system of polynomials and $s \geq \max(2r(k+1),K^2 + d) + 2$.
\end{itemize}
\end{proposition}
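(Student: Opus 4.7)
\smallskip
\textit{Proof plan for Proposition~\ref{thm:large:SystPolsSym}.}
The heart of the statement~\eqref{eq:large:SystPolsSymLowerBound} is the classical Vinogradov--type lower bound obtained by Cauchy--Schwarz. The plan is to introduce the representation function
\begin{align*}
    h_A(\bfm) = \#\big\{ (\bfn_1,\dots,\bfn_t) \in A^t \,:\, \mu_1 \bfP(\bfn_1) + \dotsb + \mu_t \bfP(\bfn_t) = \bfm \big\},
\end{align*}
and observe that $\calN(A,\bfP,\bfmu) = \sum_{\bfm} h_A(\bfm)^2$, since the symmetric system~\eqref{eq:large:SystPolsSym} simply asks that the two halves of variables produce the same value of $\sum_i \mu_i \bfP(\bfn_i)$. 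By Cauchy--Schwarz,
\begin{align*}
    \calN(A,\bfP,\bfmu) \geq \frac{\big( \sum_\bfm h_A(\bfm) \big)^2}{|\Supp h_A|} = \frac{|A|^{2t}}{|\Supp h_A|}.
\end{align*}
Since each $P_j$ is homogeneous of degree $k_j$, its values on $[N]^d$ are bounded by $O_\bfP(N^{k_j})$, so $\Supp h_A$ lies in a box of cardinality $O_{\bfP,\bfmu}(N^K)$. Substituting $|A| = \delta N^d$ and $s = 2t$ yields the asserted bound $\calN(A,\bfP,\bfmu) \gtrsim_{\bfP,\bfmu} \delta^s N^{ds-K}$.

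For the second part, the plan is to subtract off the projected and subset-sum solutions. In each of the three listed settings, we invoke the upper bounds on trivial solutions already used in the proof of Theorems~\ref{thm:intro:SystMonom},~\ref{thm:intro:SystMonomMultidim} and~\ref{thm:intro:SystPols}. Specifically, the number of projected solutions is controlled by the interpolation arguments of~\cite[Section~11]{PPW:Multidim} (or is trivially $O(N)$ when $d=1$), and the number of subset-sum solutions is controlled by Lemma~\ref{thm:large:TrivSols}, which applies provided $s > 2K/d$ and provided an $L^s$ mean-value estimate $\|F^{(\bfP)}\|_s^s \lesssim_\eps N^{ds-K+\eps}$ is available. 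The stated lower bounds $s \geq 2s_k + 2$, $s \geq 2r(k+1)+2$, and $s \geq \max(2r(k+1),K^2+d)+2$ are precisely what is needed for the corresponding mean-value estimates from~\cite{BDG:VinoMeanValue} and~\cite[Theorem~2.1]{PPW:Multidim} to deliver trivial-solution counts of the form $O(N^{ds-K-c})$ for some $c = c(s,r,d,k) > 0$ (with the extra $+2$ accounting for the fact that an estimate with exponent $s$ automatically yields one with exponent $s-2$, and with the Vinogradov input $s_k=K$ providing the appropriate tightness in case (i)).

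Combining the lower bound $\calN(A,\bfP,\bfmu) \gtrsim_{\bfP,\bfmu} \delta^s N^{ds-K}$ with the upper bounds $O(N^{ds - K - c})$ on projected and subset-sum solutions, we obtain a non-trivial solution as soon as $\delta^s \geq C(\bfP,\bfmu) N^{-c}$, which rearranges into the density threshold $\delta \geq C'(\bfP,\bfmu) N^{-c/s}$ claimed in the statement. The main obstacle is bookkeeping: ensuring that in each of the three itemised regimes the mean-value hypothesis of Lemma~\ref{thm:large:TrivSols} and the projected-solution bound of~\cite{PPW:Multidim} genuinely apply for the relevant exponent, and verifying that the constants $c(s,r,d,k)$ arising from the various sources can be combined into a single effective constant. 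The Cauchy--Schwarz step itself is soft, and no analytic input beyond the cardinality of $\Supp h_A$ is required.
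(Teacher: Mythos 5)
Your proposal is correct and follows essentially the same route as the paper: a Cauchy--Schwarz bound on the representation function of $\mu_1\bfP(\bfn_1)+\dotsb+\mu_t\bfP(\bfn_t)$, whose support lies in a box of cardinality $O_{\bfP,\bfmu}(N^K)$ by homogeneity, gives~\eqref{eq:large:SystPolsSymLowerBound}, and the second part is obtained exactly as you describe, by quoting the previously established $O(N^{ds-K-c})$ bounds on projected and subset-sum solutions in each of the three regimes and comparing with the lower bound.
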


\begin{proof}
We write $\bfP = (P_1,\dots,P_r)$ and $k_i = \deg P_i$.
For a set $E \subset \R^r$ and $\gamma \in \R$, we write
$\gamma \cdot E = \{ \gamma x, \, x \in E \}$,
and we also use traditional sumset notation in the proof.
We define $\bfP(A) = \{ \bfP(\bfn),\, \bfn \in A \}$
and a number-of-representations function
\begin{align*}
	&\phantom{(\bfu \in \Z^k)} &
	R( \bfu )
	&= \#\{\, \bfn_1,\dots,\bfn_t \in A \,:\, \mu_1 \bfP(\bfn_1) + \dotsb + \mu_t \bfP(\bfn_t) = \bfu \,\}
	&& (\bfu \in \Z^r).
\end{align*}
Summing over all $\bfu \in \Z^r$, we obtain
\begin{align*}
	|A|^t
	= \sum_{\bfu \,\in\, \mu_1 \cdot \bfP(A) + \dotsb + \mu_t \cdot \bfP(A)} R(\bfu).
\end{align*}
By Cauchy-Schwarz, it follows that
\begin{align*}
	|A|^{2t} \leq |\mu_1 \cdot \bfP(A) + \dotsb + \mu_t \cdot \bfP(A)| \cdot \sum_{\bfu \in \Z^r} R(\bfu)^2.
\end{align*}
Observing that
\begin{align*}
	\mu_1 \cdot \bfP(A) + \dotsb + \mu_t \cdot \bfP(A) 
	\subset \big[\! - O(N^{k_1}), O(N^{k_1}) \big] \times \dotsb \times \big[\! - O(N^{k_r}) , O(N^{k_r}) \big],
\end{align*}
where the implicit constants depend on $\bfP$ and $\bfmu$,
we have therefore
\begin{align*}
	\delta^{2t} N^{2dt} \lesssim_{\bfP,\bfmu} N^K \cdot \calN(A,\bfP,\bfmu).
\end{align*}
We recover~\eqref{eq:large:SystPolsSymLowerBound} after some rearranging.

In the various cases stated at the end of the proposition,
we have seen previously in this section that the number of 
projected or subset-sum solutions is 
$O_{\bfP,\bfmu}(N^{ds-K-c(s,r,d,k)})$
for some constant $c(s,r,d,k) > 0$, and therefore
we obtain solutions which are not of this kind for 
$\delta \geq C(\bfP,\bfmu) N^{-c'(s,r,d,k)}$,
for some $C(\bfP,\bfmu) > 0$ and $c'(s,r,d,k) > 0$.
\end{proof}

\section{The parabola system}
\label{sec:parab}

Fix $d \geq 1$, $s \geq 3$ and coefficients 
$\lambda_1,\dots,\lambda_s \in \Z \smallsetminus \{0\}$,
not necessarily summing up to zero.
We let $\calN(N,\bflambda)$
denote the number of solutions $\bfx_i \in [N]^d$
to the system of equations
\begin{align}
\label{eq:parab:SystParab}
\begin{split}
	\lambda_1 \bfx_1 + \dotsb + \lambda_s \bfx_s &= 0,	\\
	\lambda_1 |\bfx_1|^2 + \dotsb + \lambda_s |\bfx_s|^2 &= 0,
\end{split}
\end{align}
where $| \cdot |$ denote the Euclidean norm on $\R^d$.
This corresponds to the reduced translation-dilation invariant
system of polynomials 
$\bfP = (x_1,\dots,x_d,x_1^2 + \dotsb + x_d^2)$
of dimension $d$, rank $d+1$, degree $2$ and weight $d+2$.
We first observe that
$\calN(N,\bflambda)$ 
can be easily bounded from below
by inserting the linear equation into the quadratic one,
and invoking classical results on diagonal quadratic forms 
of rank at least five.

\begin{proposition}
\label{thm:parab:LowerBound}
Suppose that $\lambda_1 + \dotsb + \lambda_s = 0$ and at least two
of the $\lambda_i$ are positive and at least two are negative,
and $s \geq \max(4,2+\tfrac{5}{d})$.
Then
\begin{align*}
	\calN(N,\bflambda) \gtrsim N^{ds - (d+2)}.
\end{align*}
\end{proposition}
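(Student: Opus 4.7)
\textit{Proof plan.} The strategy is to reduce the counting problem to integer zeros of a single quadratic form in $n = d(s-2)$ scalar variables, noting that the hypothesis $s \geq \max(4, 2 + 5/d)$ is precisely $n \geq 5$, matching Meyer's threshold for indefinite quadratic forms.

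First, I would use translation invariance: set $\bfu = \bfx_s \in [N/3, 2N/3]^d \cap \Z^d$ and $\bfy_i = \bfx_i - \bfu \in [-N/3, N/3]^d \cap \Z^d$ for $i < s$, so every such tuple produces $(\bfx_i) \in [N]^{ds}$. Using $\sum \lambda_i = 0$, the system~\eqref{eq:parab:SystParab} becomes $\sum_{i=1}^{s-1} \lambda_i \bfy_i = 0$ and $\sum_{i=1}^{s-1} \lambda_i |\bfy_i|^2 = 0$, after direct expansion cancels the cross-terms via the linear equation. To enforce integrality after elimination, restrict further to $\bfy_i = \lambda_{s-1}\bfz_i$ for $i < s-1$ with $\bfz_i \in \Z^d$ and $|\bfz_i|_\infty \leq cN$, so that $\bfy_{s-1} = -\sum_{i<s-1}\lambda_i \bfz_i$ is automatically an integer vector of size $O(N)$. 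The quadratic equation reduces to
\begin{align*}
	Q^*(\bfz) := \lambda_{s-1} \sum_{i=1}^{s-2} \lambda_i |\bfz_i|^2 + \Big|\sum_{i=1}^{s-2} \lambda_i \bfz_i\Big|^2 = 0,
\end{align*}
a quadratic form in $n = d(s-2)$ variables which splits coordinate-wise into $d$ copies of the scalar form $Q(z_1,\dots,z_{s-2}) = \lambda_{s-1}\sum_i \lambda_i z_i^2 + (\sum_i \lambda_i z_i)^2$.

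Next, I would verify that $Q^*$ is non-degenerate and indefinite. Writing the matrix of $Q$ as $\lambda_{s-1} D + \bfv \bfv^\transp$ with $D = \mathrm{diag}(\lambda_1,\dots,\lambda_{s-2})$ and $\bfv = (\lambda_1,\dots,\lambda_{s-2})^\transp$, the matrix determinant lemma together with $\sum_{i=1}^s \lambda_i = 0$ yields $\det Q = -\lambda_{s-1}^{s-3}\lambda_s \prod_{i=1}^{s-2}\lambda_i \neq 0$, so $\rk(Q^*) = n$. For indefiniteness, the mixed-sign hypothesis guarantees the existence of scalars $t_1,\dots,t_s \in \R$ not all equal with $\sum \lambda_i t_i = \sum \lambda_i t_i^2 = 0$: the real quadric $\sum \lambda_i t_i^2 = 0$ is non-degenerate indefinite, its intersection with the hyperplane $\sum \lambda_i t_i = 0$ has real dimension $s - 2 \geq 2$, while the all-equal line is only one-dimensional. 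Setting $\bfx_i = t_i \bfe$ for any fixed $\bfe \neq 0$ gives a non-trivial real solution to~\eqref{eq:parab:SystParab} and lifts to a non-trivial real zero of $Q^*$, which together with non-degeneracy rules out definiteness.

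Finally, I would invoke a classical quantitative result for integer zeros of indefinite non-degenerate quadratic forms in $n \geq 5$ variables: by Meyer's theorem local solvability is automatic at every place, and the Hardy--Littlewood circle method (in Heath-Brown's refined form for the critical case $n = 5$, and in its classical form for larger $n$) delivers $\gtrsim N^{n-2}$ integer zeros in a box of side $\asymp N$. Combining with the $\gtrsim N^d$ free choices for $\bfu$ yields
\begin{align*}
	\calN(N,\bflambda) \gtrsim N^d \cdot N^{d(s-2) - 2} = N^{ds - (d+2)}.
\end{align*}
The main obstacle is confirming indefiniteness cleanly: the real-solution route above is elegant but relies on the dimension-counting argument for the intersection variety being strictly larger than the diagonal, which requires the full mixed-sign hypothesis; a secondary delicate point is invoking the quantitative quadratic-form result in the critical case $n = 5$, where one must appeal to Heath-Brown's $\delta$-method rather than the classical circle method.
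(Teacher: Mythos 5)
Your proposal is correct and follows essentially the same route as the paper: use translation invariance and substitution of the linear equation into the quadratic one (with a rescaling of variables to preserve integrality) to reduce~\eqref{eq:parab:SystParab} to an integral quadratic form of rank $d(s-2) \geq 5$, verify it is indefinite using the hypothesis of two positive and two negative $\lambda_i$, and invoke classical counting results for quadratic forms to obtain $\gtrsim N^{ds-(d+2)}$ solutions. The differences are minor: you also eliminate $\bfy_{s-1}$, getting a nondegenerate form in $d(s-2)$ variables whose determinant you compute via the matrix determinant lemma, and you deduce indefiniteness from a nontrivial real zero plus nondegeneracy (the dimension count there should be justified by the standard interlacing fact that restricting a form with at least two positive and two negative eigenvalues to a hyperplane keeps it indefinite), whereas the paper retains a rank-$d(s-2)$ form in $d(s-1)$ variables, quotes the signature computation from the proof of \cite[Proposition~7.3]{me:logkeil}, and then diagonalizes and cites classical results on diagonal quadratic forms.
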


\begin{proof}
We rewrite~\eqref{eq:parab:SystParab} as
\begin{align}
	\label{eq:parab:QuadForm}	
	\bfx_s = - \frac{1}{\lambda_s} \Big( \sum_{j=1}^{s-1} \lambda_j \bfx_j  \Big),
	\qquad
	\sum_{j=1}^{s-1} \lambda_s \lambda_j |\bfx_j|^2
	+ \bigg| \sum_{j=1}^{s-1} \lambda_j \bfx_j \bigg|^2 = 0.
\end{align}
We only consider solutions $(\bfx_i)$ with $\bfx_s$ as above and
$\bfx_i = \lambda_s \bfy_i$ for $1 \leq i < s$, with $\bfy_i \in [-cN,cN]^d$
for a small enough constant $c = c(\bflambda) > 0$.
By translation-invariance of~\eqref{eq:parab:SystParab},
such solutions may be shifted to
fit in the box $[N]^d$.
Unfolding the squared norm in the right-hand side 
of~\eqref{eq:parab:QuadForm},
we obtain a quadratic equation
\begin{align}
	\notag
	&&&&
	\sum_{j=1}^{s-1} \lambda_s \lambda_j |\bfy_j|^2
	+ \sum_{j,k \in [s-1]} \lambda_j \lambda_k \bfy_j \cdot \bfy_k &= 0
	\\
	\notag
	&& &\Leftrightarrow &
	\sum_{i=1}^d \bigg[
	\sum_{j=1}^{s-1} \lambda_s \lambda_j y_{ij}^2
	+ \sum_{j,k \in [s-1]} \lambda_j \lambda_{k} y_{ij} y_{ik} \bigg] 
	&= 0  
	\\
	\label{eq:parab:QuadEq}
	&& &\Leftrightarrow &
	 \wt{\bfy}^\transp B \wt{\bfy} &= 0,
\end{align}
where $\wt{\bfy} = [ \, [y_{1j}]_{j \in [s-1]} \, \dots \, [y_{dj}]_{j \in [s-1]} \, ]^\transp$ and
\begin{align*}
	B = 
	\begin{bmatrix} 
	A & & \\ & \ddots & \\ & & A  
	\end{bmatrix}
	\in \Z^{d(s-1) \times d(s-1)},
	\qquad
	A = [ \lambda_j ( \lambda_k + \delta_{jk} \lambda_s ) ]_{j,k \in [s-1]} \in \Z^{(s-1) \times (s-1)}.
\end{align*}

Under our assumptions on the $\lambda_i$,
it is established in the proof of~\cite[Proposition~7.3]{me:logkeil} 
that the quadratic form $\bfz \mapsto \bfz^\transp A \bfz$ 
is indefinite of rank $s-2$, 
and therefore $\wt{\bfy} \mapsto \wt{\bfy}^\transp B \wt{\bfy}$ 
is an indefinite quadratic form
in $d(s-1)$ variables of rank $d(s-2) \geq 5$
for $s \geq 2 + \frac{5}{d}$.
By diagonalizing $B$ and invoking
classical results on diagonal quadratic forms~\cite[Chapter~8]{Davenport:Book},
we find $\gtrsim N^{d(s-1)-2} = N^{ds-(d+2)}$ 
solutions $\bfy \in [-cN,cN]^{d(s-1)}$
to~\eqref{eq:parab:QuadEq}, 
and there are at least as many solutions 
$\bfx \in [N]^{ds}$ to 
the original system~\eqref{eq:parab:SystParab}.
\end{proof}

\begin{remark}
\label{thm:parab:LowerBoundRk}
Via the same method,
one can show that when $\sum_{i=1}^s \lambda_i \neq 0$,
the number of solutions to~\eqref{eq:parab:SystParab} in $[-N,N]^d \cap \Z^d$
is at least $c N^{ds-(d+2)}$, as long as $s \geq 1 + \tfrac{5}{d}$ and
there exists a nonzero real solution to~\eqref{eq:parab:SystParab}.
We do not insist on this point since we have opted
to work with quadrants $[N]^d$ throughout the article.
\end{remark}

Let us quote a crucial restriction estimate that
will be used in this section.

\begin{theorem}[Bourgain~\cite{Bourgain:ParabI}, Bourgain-Demeter~\cite{BD:DecouplConj}]
\label{thm:parab:RestrParab}
Suppose that $d \geq 1$ and
$\bfP = (x_1,\dots,x_d, x_1^2 + \dotsb + x_d^2)$. 
Then the estimates~\eqref{eq:intro:RestrCrit} 
and~\eqref{eq:intro:RestrEpsFree} hold
respectively for $p = 2(d+2)/d$ and $p > 2(d+2)/d$.
\end{theorem}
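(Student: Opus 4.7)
The plan is to reduce both estimates to the $\ell^2$-decoupling theorem for the paraboloid of Bourgain and Demeter, using the standard dictionary between Fourier decoupling for caps on a hypersurface and discrete restriction estimates for lattice points on that hypersurface. The setup is as follows: writing $g(\bfn) = a(\bfn) 1_{[N]^d}(\bfn)$ and rescaling $\bfalpha = (\bfbeta/N, \gamma/N^2)$ on $\T^{d+1}$ onto a box of sidelengths $N^{-1}, N^{-1}, \ldots, N^{-2}$, the exponential sum $F_a^{(\bfP)}(\bfalpha)$ becomes the Fourier transform of a function $\Psi$ with spatial support of size $\asymp N^d$ whose frequency support lies in an $O(1/N)$-neighbourhood of the truncated paraboloid $\{(\bfxi, |\bfxi|^2) : \bfxi \in [-1,1]^d\}$. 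By rescaling the neighbourhood, this is exactly the kind of extension problem to which the Bourgain--Demeter theorem applies.

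For the critical exponent $p = 2(d+2)/d$, I would partition $[-1,1]^d$ into $\sim N^{d/2}$ caps $\theta$ of diameter $\sim 1/\sqrt{N}$, so that each cap lifted to the paraboloid corresponds to a single Gauss sum / lattice point coming from one $\bfn \in [N]^d$. The $\ell^2$-decoupling inequality
\[
\|\Psi\|_{L^p} \lesssim_\eps N^\eps \Bigl(\sum_\theta \|\Psi_\theta\|_{L^p}^2\Bigr)^{1/2}
\]
applied to $\Psi$ decomposed by these caps then yields, after unwinding the rescaling and using Plancherel on each cap,
\[
\|F_a^{(\bfP)}\|_p \lesssim_\eps N^\eps \Bigl(\sum_{\bfn} |a(\bfn)|^2\Bigr)^{1/2} = N^\eps \|a\|_2,
\]
which is exactly the $\eps$-full estimate \eqref{eq:intro:RestrCrit} at the critical exponent.

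For the supercritical range $p > 2(d+2)/d$, I would run an $\eps$-removal argument in the style of Section~\ref{sec:remov}: interpolate the critical-exponent bound (playing the role of \eqref{eq:csq:RestrInfty}) against a truncated restriction estimate \eqref{eq:intro:RestrTrunc}, where the latter comes from the Tomas--Stein argument of this section applied using only major-arc information and Weyl-type bounds for the unweighted sum. This directly parallels the deduction of Corollary~\ref{thm:remov:EpsRemoval} from Proposition~\ref{thm:remov:TruncRestr}, with the Vinogradov-type input replaced by the decoupling-based critical estimate, and it removes the $N^\eps$ loss for every $p > 2(d+2)/d$, giving \eqref{eq:intro:RestrEpsFree}.

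The main obstacle is of course the $\ell^2$-decoupling theorem itself, whose proof is genuinely the hard part: it rests on Bourgain--Demeter's multilinear-to-linear induction on scales, powered by the Bennett--Carbery--Tao multilinear Kakeya / restriction inequality, and this is what one simply quotes from \cite{BD:DecouplConj}. By contrast, the passage from decoupling at the critical exponent to the discrete restriction estimates \eqref{eq:intro:RestrCrit} and \eqref{eq:intro:RestrEpsFree} in the stated form is essentially bookkeeping, combining Fourier rescaling with the standard $\eps$-removal lemma, and requires no new analytic input beyond what is already developed in Section~\ref{sec:remov}.
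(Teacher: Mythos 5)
The paper does not prove this statement at all: it is quoted, and the attribution is deliberately joint, with the critical-exponent $\eps$-full estimate coming from Bourgain--Demeter's decoupling theorem and the removal of the $\eps$ in the supercritical range resting on Bourgain's earlier truncated estimates for the paraboloid (Propositions~3.82, 3.110, 3.114 of \cite{Bourgain:ParabI}). Your outline follows the same standard route in spirit, but as written it has a genuine quantitative gap in the second step. If you obtain the truncated estimate~\eqref{eq:intro:RestrTrunc} by running the Tomas--Stein argument of Section~\ref{sec:remov} with only crude major-arc information, the level-set bound you get is $|E_\eta| \lesssim_p N^{-(d+2)} \eta^{-2p}$ for those $p$ at which the local moments converge, and for the parabola system the singular series~\eqref{eq:parab:LocalSeriesMoment} converges only for $p > 2 + \tfrac{4}{d}$ (Proposition~\ref{thm:parab:LocalMomentsCv}). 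Exactly as in Proposition~\ref{thm:remov:TruncRestr}, where the convergence threshold $K+2$ doubles into the moment range $p > 2K+4$, this yields the truncated estimate only for exponents $q > 4 + \tfrac{8}{d}$, and hence the $\eps$-removal of Corollary~\ref{thm:remov:EpsRemoval} gives~\eqref{eq:intro:RestrEpsFree} only for $p > 4 + \tfrac{8}{d}$ --- for $d=1$ this is $p > 12$ rather than the claimed $p > 6$. Covering the whole supercritical range $p > 2(d+2)/d$ requires Bourgain's sharper truncated estimates, whose proof keeps track of the denominator $q$ and the distance to the arc centre through a dyadic major-arc decomposition and an $L^2$-orthogonality argument, rather than the crude H\"older/Young step used in Section~\ref{sec:remov}; this extra input is precisely what you would have to quote (or reprove) and is why the theorem is not a corollary of decoupling plus the machinery already in the paper.

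There is also a bookkeeping error in your derivation of the critical-exponent estimate~\eqref{eq:intro:RestrCrit}. After the rescaling you describe, the frequency points are $(\bfn/N, |\bfn|^2/N^2)$, which lie exactly on the paraboloid and are $1/N$-separated; caps of diameter $N^{-1/2}$ therefore contain about $N^{d/2}$ of them, not one, so the identification of each cap with a single $\bfn \in [N]^d$ fails at that scale. The correct reduction applies $\ell^2$-decoupling at scale $\delta = N^{-2}$, with caps of diameter $\sim 1/N$ and spatial localization on a ball of radius $N^2$ (extending from the box of sidelengths $N,\dots,N,N^2$ by periodicity in the linear variables), after which each cap carries $O(1)$ points and Plancherel on a cap gives the stated bound. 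This part is fixable, standard bookkeeping, but as written the scales do not match.
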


We also define an unweighted exponential sum
\begin{align}
\label{eq:parab:FDef}
	F(\alpha,\bftheta) 
	= F^{(\bfP)}(\alpha,\bftheta)
	&=
	\sum_{ \bfn \in [N]^d } e( \alpha |\bfn|^2 + \bftheta \cdot \bfn )
	&&( (\alpha,\bftheta) \in \T^{d+1} )
\end{align}
associated to the $(d+1)$-dimensional parabola.
The estimate
\begin{align}
	\label{eq:parab:FMomentEpsFull}
	\| F \|_p^p \lesssim_p N^{dp - (d+2) + \eps}
	\quad\text{for $p \geq p_d = 2 + \frac{4}{d}$},
\end{align}
which follows from
Theorem~\ref{thm:parab:RestrParab},
will be used in a few places.
It can be proven in a simpler way by the method
of Hu and Li~\cite[Theorem~1.3]{HuLi:Degree2}.

First, we turn our attention to the problem of bounding
the number of trivial solutions,
and we need a complement to Proposition~\ref{thm:large:TrivSols}.
For distinct indices $i,j \in [s]$,
we let $\calN_{i,j}(N,\bflambda)$
denote the number of solutions $\bfx_1,\dots,\bfx_s \in [N]^d$
to~\eqref{eq:parab:SystParab} with $\bfx_i = \bfx_j$.

\begin{proposition}
\label{thm:parab:TrivSols}
For $s \geq \max( 4 , 2 + \frac{4}{d})$, there exists $c = c(d,s) > 0$
such that, for every pair of distinct indices $i,j \in [d]$,
\begin{align*}
	\calN_{i,j}(N,\bflambda) &\lesssim N^{ds - (d+2) - c}.
\end{align*}
\end{proposition}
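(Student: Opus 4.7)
The strategy is to express $\calN_{i,j}(N,\bflambda)$ as a Fourier integral and then bound it through the restriction estimates for the parabola. Setting $\bfy=\bfx_i=\bfx_j$ and $\mu=\lambda_i+\lambda_j$, orthogonality applied to~\eqref{eq:parab:SystParab} together with the definition~\eqref{eq:parab:FDef} gives
\begin{align*}
\calN_{i,j}(N,\bflambda)
= \int_{\T^{d+1}} F(\mu\alpha,\mu\bftheta) \prod_{k\neq i,j} F(\lambda_k\alpha,\lambda_k\bftheta) \, d\alpha\, d\bftheta.
\end{align*}
The argument proceeds by cases on whether $\mu$ vanishes.

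In the generic case $\mu \neq 0$, the integrand is a product of $s-1$ exponential sums. By H\"older's inequality, together with the invariance $\|F(c\,\cdot)\|_p = \|F\|_p$ for $c \in \Z \smallsetminus \{0\}$ (by $1$-periodicity), we obtain $\calN_{i,j} \leq \|F\|_{s-1}^{s-1}$. If $s-1 \geq p_d := 2(d+2)/d$, then~\eqref{eq:parab:FMomentEpsFull} immediately yields $\|F\|_{s-1}^{s-1} \lesssim_\eps N^{d(s-1)-(d+2)+\eps}$, a power savings of $d - \eps$ over $N^{ds - (d+2)}$. If instead $2 \leq s - 1 < p_d$, a H\"older interpolation between the Parseval identity $\|F\|_2^2 = N^d$ (valid since $\bfP$ is injective on $[N]^d$) and the critical estimate $\|F\|_{p_d}^{p_d} \lesssim_\eps N^{dp_d - (d+2) + \eps}$ yields $\|F\|_p^p \lesssim_\eps N^{dp/2 + \eps}$ for $p \in [2, p_d]$. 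Substituting $p = s-1$ produces $\calN_{i,j} \lesssim N^{d(s-1)/2 + \eps}$, with power savings $d(s-1)/2 - 2$ over $N^{ds - (d+2)}$, which is strictly positive under the hypothesis $s \geq \max(4, 2 + 4/d)$.

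In the remaining case $\mu = 0$, any tuple with $\bfx_i = \bfx_j$ is automatically a subset-sum solution via the partition $\{i, j\} \sqcup ([s] \smallsetminus \{i, j\})$: both blocks have $\sum_{k \in E} \lambda_k = 0$ since $\mu = 0$ and $\sum_k \lambda_k = 0$, the $\{i, j\}$ block has $\sum_{k \in E} \lambda_k \bfP(\bfx_k) = \mu \bfP(\bfy) = 0$, and the complementary block has zero weighted sum by subtraction from~\eqref{eq:parab:SystParab}. Thus $\calN_{i,j}$ is bounded by the total count of subset-sum solutions, to which Proposition~\ref{thm:large:TrivSols} applies under the hypothesis $s > 2K/d = 2 + 4/d$ together with the moment bound provided by~\eqref{eq:parab:FMomentEpsFull}, yielding $O(N^{ds - (d+2) - c})$.

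The principal technical point is the subcritical sub-regime of Case 1, where one must verify that interpolation supplies a strictly positive power savings in all ranges allowed by the hypothesis; this is a short arithmetic check of the inequality $d(s-1) > 4$. The second case is resolved by the structural subset-sum observation and reduces immediately to Proposition~\ref{thm:large:TrivSols}, requiring no new analytic input.
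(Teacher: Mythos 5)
Your treatment of the main case $\lambda_i+\lambda_j\neq 0$ is correct and is in substance the paper's own argument: the paper likewise passes through orthogonality, H\"older with the periodicity invariance $\|F(c\,\cdot)\|_p=\|F\|_p$, and an interpolation between $\|F\|_2^2=N^d$ and the Bourgain--Demeter moment bound~\eqref{eq:parab:FMomentEpsFull}. Its single estimate~\eqref{eq:parab:MomentInterpol}, valid for all $2\le t<s$ under $s>2+2/d$, replaces your split into the supercritical and subcritical sub-regimes, but the content is the same, and your arithmetic check that the saving $d(s-1)/2-2$ is positive (since $s\ge 2+4/d$ gives $d(s-1)\ge d+4>4$) is right.

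Where you genuinely diverge is the degenerate case $\lambda_i+\lambda_j=0$. The paper merges $\bfx_i=\bfx_j$ and asserts $\calN_{i,j}(N,\bflambda)\le \calN(N,\bfmu)$ with $t\in\{s-2,s-1\}$ nonzero coefficients, then applies the same interpolation; you instead note that such tuples are subset-sum solutions and invoke Lemma~\ref{thm:large:TrivSols}. Your reduction is clean, but be aware of two points. First, Lemma~\ref{thm:large:TrivSols} requires the strict inequality $s>2K/d=2+4/d$ (and also $\sum_k\lambda_k=0$, which the proposition does not state, though it holds in the intended application), whereas the hypothesis $s\ge\max(4,2+4/d)$ permits equality when $d\in\{1,2\}$, i.e.\ $(d,s)=(1,6)$ or $(2,4)$; so your Case 2 does not cover those boundary points. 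Second, this gap cannot be repaired: at those points the statement itself fails whenever a pair with $\lambda_i+\lambda_j=0$ exists. For instance with $d=2$, $s=4$, $\bflambda=(1,-1,1,-1)$, $i=1$, $j=2$, imposing $\bfx_1=\bfx_2$ in~\eqref{eq:parab:SystParab} forces only $\bfx_3=\bfx_4$, so $\calN_{1,2}\asymp N^4=N^{ds-(d+2)}$ and there is no power saving. The paper's own proof conceals this because when $\lambda_i+\lambda_j=0$ the merged variable is free and the inequality $\calN_{i,j}\le\calN(N,\bfmu)$ silently discards a factor $N^d$. So the defect lies in the stated proposition (and in the paper's proof of it), not in your argument; in the range actually needed for Theorem~\ref{thm:intro:SystParab} ($d=1$, $s\ge 7$; $d=2$, $s\ge 5$; $d\ge 3$, $s\ge 4$) one has $s>2+4/d$ strictly and your proof is complete.
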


\begin{proof}
We first show that, for a certain $c(t,s,d) > 0$,
\begin{align}
	\label{eq:parab:MomentInterpol}
	\| F \|_t^t \lesssim N^{s - (d+2) - c(t,s,d)}
	\quad\text{for}\quad 
	2 \leq t < s.
\end{align}
Indeed, by interpolation between $L^2$ and $L^s$,
and via~\eqref{eq:parab:FMomentEpsFull}, we obtain
\begin{align*}
	\| F \|_t^t
	&\leq ( \| F \|_s^s )^{1-\tfrac{s-t}{s-2}} ( \| F \|_2^2 )^{\tfrac{s-t}{s-2}} \\
	&\lesssim (N^{ds-(d+2) + \eps})^{1-\tfrac{s-t}{s-2}} (N^d)^{\tfrac{s-t}{s-2} } \\
	&\lesssim N^{ ds - (d+2) + \eps} (N^{2 - (s-2)d - \eps})^{ \tfrac{s-t}{s-2} },
\end{align*}
which is $\lesssim N^{ ds - (d+2) - c(t,s,d) }$
since $s > 2 + \tfrac{2}{d}$.

Next, note that for distinct indices $i,j \in [s]$,
we have $\calN_{i,j}(N,\bflambda) \leq \calN(N,\bfmu)$
with $\bfmu \in (\Z \smallsetminus \{0\})^t$ and $t = s-1$ or $t = s-2$
according to whether $\lambda_i + \lambda_j = 0$ or not.
Observe also that
\begin{align*}
	\calN_{i,j}(N,\bfmu)
	= \int_{\T^{d+1}} F(\mu_1 \bfalpha) \cdots F(\mu_t \bfalpha) \dbfalpha 
	\leq \| F \|_t^t.
\end{align*}
We have $s-1 \geq s-2 \geq 2$ for $s \geq 4$,
and by~\eqref{eq:parab:MomentInterpol} it follows that
$\calN_{i,j}(N,\bfmu) \lesssim N^{ds-(d+2)-c(d,s)}$
for a certain $c(d,s) > 0$.
\end{proof}

At this stage we have developed enough machinery to
solve the system of equations~\eqref{eq:parab:SystParab}
in a thin subset of $[N]^d$.
\smallskip

\textit{Proof of Theorem~\ref{thm:intro:SystParab}.}
We wish to apply again Theorem~\ref{thm:intro:SystTslInv}.
The bounds~\eqref{eq:intro:NumberSolsBounds}
are provided by Propositions~\ref{thm:parab:LowerBound}
and~\ref{thm:parab:TrivSols}
as well as Lemma~\ref{thm:large:TrivSols}
(which is applicable thanks to~\eqref{eq:parab:FMomentEpsFull}),
provided that $s \geq \max(4,2+\tfrac{5}{d})$,
a condition equivalent to the one stated in the theorem.
The full $L^2 \rightarrow L^p$ estimate of
Theorem~\ref{thm:parab:RestrParab} implies
of course~\eqref{eq:intro:RestrEpsFullLinfty}
and~\eqref{eq:intro:RestrTruncAgain} for
some real numbers $s',s''$ with $p_d = 2 + \tfrac{4}{d} < s'' < s' < s$. 
\qed
\smallskip

\begin{remark}
For $\bfP = (x_1,\dots,x_d,x_1^2 + \dotsb + x_d^2)$,
Bourgain~\cite[Propositions~3.6,~3.110 and~3.114]{Bourgain:ParabI} proved that
\begin{align*}
	\| F_a^{(\bfP)} \|_p^p \lesssim N^{dp/2 - (d+2)} \| a \|_2^p
\end{align*}
when $d=1$ and $p > 6$, 
or $d \geq 2$ and $p > 4$,
or $d \geq 4$ and $p \geq 2 + \tfrac{8}{d}$.
This can be used to obtain the conclusion of
Theorem~\ref{thm:intro:SystParab}
respectively for $d = 1$ and $s \geq 7$,
or $d \geq 2$ and $s \geq 5$, 
or $d \geq 5$ and $s \geq 4$.
\end{remark}

\medskip

In the second part of this section, we apply 
a traditional blend of the circle
method to derive an asymptotic formula for $\calN(N,\bflambda)$.
The bound~\eqref{eq:parab:FMomentEpsFull}
allows us to control the contribution of minor arcs,
and therefore most of our attention is devoted to the
major arc piece.
We define the Weyl sum
\begin{align*}
	&\phantom{((\alpha,\theta) \in \T^2)} &
	G(\alpha,\theta) &=
	\sum_{ n \in [N] } e( \alpha n^2 + \theta n )
	&&((\alpha,\theta) \in \T^2),
\end{align*}
so that by~\eqref{eq:parab:FDef} and
splitting of variables, we have
\begin{align}
\label{eq:parab:Fsplitting}
	F(\alpha,\bftheta) = \prod_{j=1}^d G(\alpha,\theta_j).
\end{align}
We also define a Gaussian sum and an oscillatory integral respectively by
\begin{align*}
	S(a,b;q) &= \sum_{u \bmod q} e_q( au^2 + bu )
	&&(q \geq 1,\, a,b \in \Z_q),
	\\
	I(\beta,\xi;N) &= \int_0^N e(\beta x^2 + \xi x) \dx
	&& (\beta,\xi \in \R),
\end{align*}
and we write $I(\beta,\xi)= I(\beta,\xi;1)$.
By a change of variables, we have
\begin{align}
\label{eq:parab:IRescaling}
	&\phantom{(\beta,\xi \in \R)} &
	I(\beta,\xi;N) &= N \cdot I(N^2\beta,N\xi)
	&&(\beta,\xi \in \R).
\end{align}
For a parameter $Q \geq 1$,
we define individual major arcs of level $Q$ by
\begin{align*}
	&\phantom{= .} \frakM_Q(a,\bfb;q)
	\\
	\notag
	&= \{\, (\alpha,\bftheta) \in \T^{d+1} \,:\,
	\| \alpha - a/q \| \leq QN^{-2},\, \| \theta_j - b_j / q \| \leq QN^{-1} \ (1 \leq j \leq d) \,\},
\end{align*}
for any $q \geq 1$ and $(a,\bfb) \in [q]^{d+1}$.
We define the major and minor arcs of level $Q$ by
\begin{align}
\label{eq:parab:AllMajorArcs}
	\frakM_Q = \bigsqcup_{q \geq 1}\, \bigsqcup_{\substack{ (a,\bfb) \in [q]^{d+1} \\ (a,\bfb,q) = 1 }}
	\frakM_Q(a,\bfb,q),
	\qquad
	\frakm_Q = \T^{d+1} \smallsetminus \frakM_Q,
\end{align}
where one can check the union is indeed disjoint
when $Q \leq \tfrac{1}{2} N^{1/3}$.
When the need arises, we will work with 
the fundamental domain $\frakU = (N^{-1/2},1 + N^{-1/2}]^{d+1}$ of $\T^{d+1}$.
The reason for this choice is of course that,
for $Q \leq \tfrac{1}{2} N^{1/2}$, 
\begin{align*}
	(a,\bfb)/q + [QN^{-2},QN^{-2}] \times [QN^{-1},QN^{-1}]^d \subset \frakU
	\quad\text{for}\quad
	1 \leq q \leq Q,\,
	(a,\bfb) \in [q]^{d+1}.
\end{align*}

We start by deriving major and minor arc bounds for the exponential sum~\eqref{eq:parab:Fsplitting}.

\begin{proposition}
\label{thm:parab:FBounds}
Suppose that $N^{1/100} \leq Q \leq N^{1/3}$.
For every $1 \leq q \leq Q$, $(a,\bfb) \in [q]^{d+1}$,
and $(\alpha,\bftheta) \in \frakM_Q(a,\bfb,q) \cap \frakU$, we have
\begin{align*}
	F(\alpha,\bftheta)
	= \prod_{j=1}^d q^{-1} S(a,b_j;q) I( \alpha - a/q, \theta_j - b_j/q ; N) + O( Q^{-1/4} N^d ) .
\end{align*}
For $(\alpha,\bftheta) \in \frakm_Q$, we have
\begin{align*}
	|F(\alpha,\bftheta)|\lesssim Q^{-1/4} N^d.
\end{align*}
\end{proposition}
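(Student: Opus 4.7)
The plan is to exploit the factorization $F(\alpha, \bftheta) = \prod_{j=1}^d G(\alpha, \theta_j)$ from~\eqref{eq:parab:Fsplitting}, where $G(\alpha, \theta) = \sum_{n \in [N]} e(\alpha n^2 + \theta n)$, and to analyze each one-dimensional quadratic factor by a standard circle-method treatment; implicit constants may depend on $d$ throughout.

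For the major-arc approximation, I would fix $(\alpha, \bftheta) \in \frakM_Q(a, \bfb; q) \cap \frakU$, write $\beta = \alpha - a/q$ and $\xi_j = \theta_j - b_j/q$, and partition $[1, N]$ by residue classes modulo $q$ via $n = qm + r$. A first-order Euler-Maclaurin estimate applied to the inner sum in $m$ then yields
\[
	G(\alpha, \theta_j) = q^{-1} S(a, b_j; q)\, I(\beta, \xi_j; N) + O\bigl(q(1 + |\beta|N^2 + |\xi_j|N)\bigr) = M_j + O(Q^2),
\]
where $M_j$ denotes the main term and we use $q, |\beta|N^2, |\xi_j|N \leq Q$. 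Since $|G(\alpha, \theta_j)|, |M_j| \leq N$, telescoping the product gives $\bigl|\prod_j G - \prod_j M_j\bigr| \lesssim d\, Q^2 N^{d-1} \leq Q^{-1/4} N^d$ for $Q \leq N^{1/3}$ and $N$ large enough.

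For the minor-arc bound, the goal is to produce an index $j_0$ with $|G(\alpha, \theta_{j_0})| \lesssim Q^{-1/4} N$, which then yields $|F| \leq N^{d-1} |G(\alpha, \theta_{j_0})| \lesssim Q^{-1/4} N^d$. I would apply Dirichlet's theorem to $\alpha$ with parameter $N^2/Q$, producing $(a_0, q_0)$ with $1 \leq q_0 \leq N^2/Q$, $(a_0, q_0) = 1$, and $|\alpha - a_0/q_0| \leq Q/(q_0 N^2)$. If $q_0 > Q$, the classical Weyl inequality, derived from the squaring identity $|G|^2 \leq N + 2\sum_{h \leq N} \min(N, \|2\alpha h\|^{-1})$ and the usual divisor-sum estimate, yields $|G(\alpha, \theta_j)| \lesssim_\eps N^{1+\eps} Q^{-1/2}$ for every $j$, which suffices once $Q \geq N^{1/100}$ and $\eps$ is chosen small. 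If $q_0 \leq Q$, I would set $b_j = \mathrm{round}(q_0 \theta_j)$, so that $(a_0, \bfb, q_0) = 1$ automatically; since the $\alpha$-component of the $\frakM_Q(a_0, \bfb, q_0)$ condition is satisfied, the minor-arc hypothesis forces some $j_0$ with $|\xi_{j_0}| := |\theta_{j_0} - b_{j_0}/q_0| > Q/N$. Expanding $G(\alpha, \theta_{j_0}) = \sum_r e_{q_0}(a_0 r^2 + b_{j_0} r)\, T_r$ over residue classes, where $T_r$ is an inner quadratic sum in $m$ of length $N/q_0$ with quadratic coefficient $\beta q_0^2$ and linear coefficient $\theta'_r \approx q_0 \xi_{j_0}$, I would bound $|T_r|$ by either a van der Corput $B$-process (when $|\beta|N^2 \geq 1$) or by Abel summation against the geometric sum in $\theta'_r$ (when $|\beta|N^2 < 1$), and combine with the Gauss-sum cancellation $|S(a_0, b_{j_0}; q_0)| \leq q_0^{1/2}$ to derive the power saving $|G(\alpha, \theta_{j_0})| \lesssim N Q^{-1/4}$.

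The main technical obstacle will be Case B of the minor-arc bound: a direct Euler-Maclaurin approximation of $G(\alpha, \theta_{j_0})$ produces an error of order $q_0 |\xi_{j_0}| N$, which can be as large as $N$ and is therefore useless. The resolution relies on descending to the inner sum $T_r$, where the bound $|\beta|N^2 \leq Q$ keeps the quadratic oscillation controlled while the linear phase $\theta'_r$ of magnitude $\gtrsim q_0 Q/N$ provides the missing cancellation; the final $Q^{-1/4}$ power then emerges as the product of the bounds on $T_r$ and of the outer Gauss-sum saving $q_0^{1/2}$.
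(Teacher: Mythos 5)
Your major-arc approximation and your treatment of the case $q_0 > Q$ are sound and essentially coincide with the paper's argument: the paper likewise splits $F(\alpha,\bftheta)=\prod_j G(\alpha,\theta_j)$, applies the standard approximation formula (\cite{Vaughan:Book}, Theorem~7.2) to each factor with error $O(Q^2)\le Q^{-1/4}N$, telescopes the product, and invokes Weyl's inequality when the Dirichlet denominator of $\alpha$ exceeds $Q$. Your pigeonhole reduction of the remaining minor-arc case (denominator $q_0\le Q$, hence some $j_0$ with $\|\theta_{j_0}-b_{j_0}/q_0\|>QN^{-1}$, after which it suffices to save a power of $Q$ in the single factor $G(\alpha,\theta_{j_0})$) is also legitimate.

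The gap is in how you conclude $|G(\alpha,\theta_{j_0})|\lesssim NQ^{-1/4}$ in that last case. First, the inner sums $T_r$ genuinely depend on $r$ (through the unimodular prefactor $e(\beta r^2+\xi r)$ and through the linear coefficient $2\beta q_0 r+q_0\xi$), so you cannot multiply a bound on $\max_r|T_r|$ by the complete Gauss-sum bound $|S(a_0,b_{j_0};q_0)|\le q_0^{1/2}$: the Gauss sum does not factor out of the sum over $r$, and no partial-summation substitute is offered. Second, your dichotomy is inadequate in the intermediate range $1\le|\beta|N^2\ll Q$: there the B-process/second-derivative bound $|T_r|\ll (N/q_0)|B|^{1/2}+|B|^{-1/2}$, with $B=\beta q_0^2$, has $|B|^{-1/2}$ as large as the trivial bound $N/q_0$, so it yields no saving; in that regime the cancellation must come from the linear coefficient via the Kusmin--Landau (first-derivative) test, using that the phase derivative $2Bm+L_r$ stays at distance $\gg q_0Q/N$ from the integers because its total variation is at most $2|\beta|q_0N\le 2Q/N$. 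If you organize the case analysis around the derivative (first-derivative test when $\|2Bm+L_r\|\gg q_0Q/N$ throughout, giving $|T_r|\ll N/(q_0Q)$; second-derivative test otherwise, which forces $|\beta|\gg Q/N^2$ and gives $|T_r|\ll \sqrt{Q}+N/(q_0\sqrt{Q})$), then trivial summation over the $q_0$ residue classes already yields $|G(\alpha,\theta_{j_0})|\lesssim N/\sqrt{Q}\le NQ^{-1/4}$, so the Gauss-sum saving you invoke is in fact unnecessary. Note finally that the paper avoids this computation altogether: it argues by contraposition, quoting the final coefficient lemma (\cite{Baker:Book}, Lemma~4.6), which says that if every factor satisfies $|G(\alpha,\theta_j)|\ge\eta N$ then each pair $(\alpha,\theta_j)$ admits a rational approximation whose denominator is a bounded multiple $t_jq$ of the Dirichlet denominator of $\alpha$; taking the least common multiple of the $t_jq$ and choosing $\eta$ an appropriate small power of $Q$ places $(\alpha,\bftheta)$ in $\frakM_Q$, which is exactly the contrapositive of the minor-arc bound.
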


\begin{proof}
By Dirichlet's principle, we may find
$1 \leq a \leq q \leq 2^6 N$ with $(a,q) = 1$ such that
$|\alpha - a/q| \leq 2^{-6} q^{-1} N^{-1} \leq q^{-2}$.
If $q > Q$, it follows by Weyl's inequality~\cite[Lemma~2.4]{Vaughan:Book} that
$|G(\alpha,\theta_j)| \lesssim_\eps Q^{-1/2} N^{1+\eps} \lesssim Q^{-1/4} N$ 
for all $j \in [d]$,
and therefore $|F(\alpha,\bftheta)| \lesssim Q^{-d/4} N^d$ 
by~\eqref{eq:parab:Fsplitting}.

Next, fix a parameter $\eta \in (0,1]$ 
whose value shall be determined shortly.
If $q \leq Q$ and there exists $j \in [d]$ such that 
$|G(\alpha,\theta_j)| \leq \eta N$,
then clearly $|F(\alpha,\bftheta)| \leq \eta N^d$
by~\eqref{eq:parab:Fsplitting}.

In the case where $q \leq Q$ and
$|G(\alpha,\theta_j)| \geq \eta N$ for all $j \in [d]$,
we show that $(\alpha,\bftheta) \in \frakM_Q$
for a certain value of $\eta$.
By a final coefficient lemma~\cite[Lemma~4.6]{Baker:Book},
and assuming that $Q^{1/2} \leq \eta N^{1-\eps}$
for some $\eps > 0$, we may find an integer $1 \leq t_j \leq 2^6$ 
for every $j \in [d]$ such that,
writing $q_j = t_j q$, we have
\begin{align*}
	q_j \lesssim_\eps \eta^{-2} N^\eps,
	\quad
	\| q_j \alpha \| \lesssim_\eps \eta^{-2} N^{-2+\eps},
	\quad
	\| q_j \theta_j \| \lesssim \eta^{-2} N^{-1+\eps}.
\end{align*}
We let $q_0 = [q_1,\dots,q_k]$, and since
we have $\| q_0 \gamma \| \leq (q_0/q_j) \| q_j \gamma \|$
for every $\gamma \in \T$ and $j$, we deduce that
\begin{align*}
	q_0 \lesssim_\eps \eta^{-2} N^\eps,
	\quad
	\| q_0 \alpha \| \lesssim_\eps \eta^{-2} N^{-2+\eps},
	\quad
	\| q_0 \theta_j \| \lesssim \eta^{-2} N^{-1+\eps}.	
\end{align*}
Finally, choose $\eta = Q^{- 1/2 - \eps_0}$
for an $\eps_0 \in (0,1]$,
so that for $N$ large and $\eps$ small
we have $(\alpha,\bftheta) \in \frakM_Q$.

Working now with $(\alpha,\bftheta) \in \frakU \cap \frakM_Q(a,\bfb,q)$,
with $q \leq Q$ and $(a,\bfb) \in [q]^{d+1}$,
we have $| \alpha - a/q | \leq Q N^{-1}$ and
$| \theta_j - b_j/q | \leq Q N^{-2}$ for all $j$.
By the usual approximation formula~\cite[Theorem~7.2]{Vaughan:Book},
it follows that
\begin{align*}
	G(\alpha,\theta_j)  
	= q^{-1} S(a,b_j;q)I(\alpha - a/q,\theta_j - b_j/q ; N) + O( Q^2 )
\end{align*}
for all $j \in [d]$,
and we have $Q^2 \leq Q^{-1/4} N$. 
Taking the product over $j \in [d]$,
we obtain the required approximation of $F$ on $\frakM_Q(a,\bfb,q)$,
again by~\eqref{eq:parab:Fsplitting}.
\end{proof}

We treat in advance certain local moments
that will arise in our analysis.

\begin{proposition}
\label{thm:parab:LocalMomentsCv}
For $p > 0$ and $i \in [s]$, let
\begin{align}
	\label{eq:parab:LocalSeriesMoment}
	\frakS_{i,p}
	&= \sum_{q \geq 1} \sum_{\substack{ (a,\bfb) \in [q]^{d+1} \,: \\  (a,\bfb,q) = 1 }} \prod_{j=1}^d 
	\big| q^{-1} S\big( \lambda_i (a,b_j) ; q \big) \big|^p,
	\\
	\label{eq:parab:LocalIntgMoment}
	\frakJ_{i,p}
	&= \int_{\R^{d+1}} \prod_{j=1}^d 
	\big| I\big( \lambda_i(\beta,\xi_j) \big) \big|^p \dbeta \dbfxi.
\end{align}
Then $\frakS_{i,p} < \infty$ for $p > 2 + \tfrac{4}{d}$
and $\frakJ_{i,p} < \infty$ for $p > 2 + \tfrac{2}{d}$.
\end{proposition}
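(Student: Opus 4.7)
The plan is to treat $\frakJ_{i,p}$ and $\frakS_{i,p}$ in parallel, in each case combining an $L^2$ identity with an $L^\infty$ pointwise bound via $L^p$-interpolation for $p \geq 2$, and then exploiting the product structure in the $d$ transverse variables to reduce to a one-dimensional estimate raised to the $d$th power.

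For the singular integral, after the harmless rescaling $(\beta,\bfxi) \mapsto \lambda_i^{-1}(\beta,\bfxi)$ and Fubini, one reduces to estimating
\begin{align*}
\int_\R \Big( \int_\R |I(\beta,\xi)|^p \,d\xi \Big)^{\!d} d\beta.
\end{align*}
For fixed $\beta$, the map $\xi \mapsto I(\beta,\xi)$ is the Fourier transform of $x \mapsto e(\beta x^2)1_{[0,1]}(x)$, so Plancherel yields $\int_\R |I(\beta,\xi)|^2 d\xi = 1$. Meanwhile the van der Corput second-derivative test applied to the phase $\beta x^2 + \xi x$ furnishes the uniform bound $|I(\beta,\xi)| \lesssim (1+|\beta|)^{-1/2}$. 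Interpolating for $p \geq 2$ gives $\int_\R |I|^p d\xi \lesssim (1+|\beta|)^{-(p-2)/2}$, after which the outer $\beta$-integral of the $d$th power converges precisely when $d(p-2)/2 > 1$, namely $p > 2 + 2/d$.

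For the singular series we mimic this on $\Z_q$. Orthogonality of additive characters gives the discrete Plancherel identity
\begin{align*}
\sum_{b \in \Z_q} |S(\lambda_i a, \lambda_i b; q)|^2 = (\lambda_i,q)\, q^2,
\end{align*}
uniformly in $a$, while the classical quadratic Gauss sum bound reads $|S(\lambda_i a, \lambda_i b; q)| \lesssim (\lambda_i a, q)^{1/2} q^{1/2}$ with vanishing unless $(\lambda_i a, q) \mid \lambda_i b$. Interpolating for $p \geq 2$ and raising to the $d$th power, we obtain
\begin{align*}
\sum_{\bfb \in \Z_q^d} \prod_{j=1}^d |q^{-1} S(\lambda_i a, \lambda_i b_j; q)|^p
\lesssim_p (\lambda_i, q)^d \big( (\lambda_i a, q)/q \big)^{d(p-2)/2}.
\end{align*}
In the generic stratum $(a,q) = 1 = (\lambda_i,q)$, this reduces to $\lesssim q^{-d(p-2)/2}$; summing over $a \in [q]$ coprime to $q$ and then over $q$ produces a series $\sum_q q^{d+1-dp/2}$ convergent precisely for $p > 2 + 4/d$. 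The non-generic strata with $(a,q)$ or $(\lambda_i,q)$ nontrivial are handled by a standard decomposition along these divisors, combined with the coprimality condition $(a,\bfb,q)=1$.

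The main technical point is that a direct Plancherel-plus-$L^\infty$ bound, \emph{without} using $(a,\bfb,q)=1$, yields $\sum_q (\lambda_i,q)^{d+1} \tau(q/(\lambda_i,q))$ for the total contribution, which diverges. The savings come precisely from the coprimality: when the divisibility indicator $(\lambda_i a, q) \mid \lambda_i b_j$ forces every $b_j$ to share a common factor with $(a,q)$, the condition $(a,\bfb,q)=1$ kills all such $\bfb$ outright. Organising this bookkeeping via the multiplicative factorisation of $q^{-1}S$ over prime powers, in the spirit of~\cite{ACK:Book}, delivers the stated range $p > 2 + 4/d$.
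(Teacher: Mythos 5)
Your argument is correct in substance but takes a somewhat heavier route than the paper's. For $\frakJ_{i,p}$, your Plancherel-in-$\xi$ identity combined with the uniform bound $\sup_{\xi}|I(\beta,\xi)|\lesssim (1+|\beta|)^{-1/2}$ and interpolation is fine and gives the same range $p>2+\tfrac{2}{d}$; the paper instead uses the two-variable bound $|I(\beta,\xi)|\lesssim (1+|\beta|+|\xi|)^{-1/2}$ and integrates directly, and the two are interchangeable here. For $\frakS_{i,p}$, two remarks. First, your ``discrete Plancherel'' is in general only an upper bound $\sum_{b\bmod q}|S(\lambda_i a,\lambda_i b;q)|^2\leq(\lambda_i,q)\,q^2$ (the off-diagonal classes $u\equiv v\bmod q/(\lambda_i,q)$ need not contribute with unit phase), but an upper bound is all you use and $(\lambda_i,q)=O(1)$ anyway, so this is harmless. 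Second, and more importantly, the non-generic strata that you defer to an ``ACK-style prime-power factorisation'' close much more simply, and precisely by the mechanism you yourself describe: by Lemma~\ref{thm:gauss:GaussLemma} the term vanishes unless $(\lambda_i a,q)\mid\lambda_i b_j$ for every $j$, whence $h:=(a,q)$ divides $\gcd(\lambda_i a,\lambda_i b_1,\dots,\lambda_i b_d,\lambda_i q)=|\lambda_i|\,(a,\bfb,q)=|\lambda_i|$; so under the coprimality condition only $h\mid\lambda_i$ survives, in which case $(\lambda_i a,q)\leq|\lambda_i|h=O(1)$ and the bound from your generic stratum applies verbatim --- no factorisation over prime powers is needed. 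This gcd observation is in fact the paper's entire proof of the series part, and it also shows that your Plancherel/interpolation step buys nothing there: once $(\lambda_i a,q)=O(1)$, the pointwise bound $|q^{-1}S|\lesssim q^{-1/2}$ together with the trivial count of at most $q^{d+1}$ tuples $(a,\bfb)$ already yields $\sum_q q^{d+1-dp/2}$, hence convergence exactly for $p>2+\tfrac{4}{d}$.
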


\begin{proof}
By Lemma~\ref{thm:gauss:GaussLemma} and
writing $h = (a,q)$ and $\lambda = \lambda_1 \cdots \lambda_s$
in~\eqref{eq:parab:LocalSeriesMoment}, we obtain
\begin{align*}
	\frakS_{i,p}
	&\lesssim_{\lambda_i}  \sum_{q \geq 1} \, \sum_{\substack{ 1 \leq a,b_1,\dots,b_d \leq q \,: \\ (a,b_1,\dots,b_d,q) = 1 }}
	1_{ h | \lambda (b_1,\dots,b_d) } \, h^{dp/2} q^{-dp/2}
	\\
	&\lesssim_{\lambda_i} \sum_{q \geq 1} q^{d+1 - dp/2}
\end{align*}
since $h | \lambda (a,b_1,\dots,b_d,q)$ implies $|h| \leq |\lambda|$,
and the last sum is absolutely convergent precisely for $p > 2(d+2)/d$.

By the usual van der Corput estimate,
and integrating first in the variables $\xi_j$ 
in~\eqref{eq:parab:LocalIntgMoment}, we also have
\begin{align*}
	\frakJ_{i,p}
	\lesssim \int_{\R} \prod_{j=1}^d \bigg[ \int_{\R} (1 + |\beta| + |\xi_j|)^{-p/2} \dxi_j \bigg] \dbeta.
\end{align*}
Note that $\int_0^\infty (1+a+x)^{-p/2} dx \asymp_p (1+a)^{1-p/2}$
for $a \geq 0$ and $p > 2$, and therefore under this assumption we have
\begin{align*}
	\frakJ_{i,p}
	\lesssim \int_{\R} (1+|\beta|)^{d(1-p/2)} \dbeta.
\end{align*}
This last integral is absolutely convergent for $p > 2 + \tfrac{2}{d}$.
\end{proof}

We define the
singular series and singular integral 
truncated at the level $T \geq 1$ respectively by
\begin{align}
	\label{eq:parab:SgSeries}
	\frakS(T) &= 
	\sum_{q \leq T} \sum_{(a,\bfb,q) = 1}
	\prod_{i=1}^s \prod_{j=1}^d q^{-1} S\big( \lambda_i(a,b_j) ; q \big),
	\\
	\label{eq:parab:SgIntg}
	\frakJ(T) &= 
	\int_{[-T,T]^{d+1}}
	\prod_{i=1}^s \prod_{j=1}^d q^{-1} I\big( \lambda_i(\beta,\xi_j) \big) \dbeta \dbfxi,
\end{align}
and when those converge absolutely 
we write $\frakS = \frakS(+\infty)$
and $\frakJ = \frakJ(+\infty)$.
By Hölder's inequality applied to products over $i \in [s]$,
and by Proposition~\ref{thm:parab:LocalMomentsCv},
it follows that we have absolute convergence
in~\eqref{eq:parab:SgSeries} and~\eqref{eq:parab:SgIntg}
for $s > 2 + \frac{4}{d}$.
We now have all the moment bounds needed to carry out our
main estimation.

\begin{proposition}
\label{thm:parab:Asympt}
For $s > 2 + \frac{4}{d}$, we have
$\frakS, \frakJ \in [0,\infty)$ and
there exists $\nu > 0$ such that
\begin{align*}
	\calN(N,\bflambda) = \frakS \cdot \frakJ \cdot N^{ds - (d+2)}
	+ O(N^{ds - (d+2) - \nu}).
\end{align*}
\end{proposition}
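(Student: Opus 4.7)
The plan is to apply the Hardy-Littlewood circle method. By orthogonality,
\begin{align*}
    \calN(N,\bflambda) = \int_{\T^{d+1}} \prod_{i=1}^s F(\lambda_i \bfalpha) \, \dbfalpha,
\end{align*}
where I use the shorthand $\bfalpha = (\alpha,\bftheta) \in \T^{d+1}$ and $F$ is defined in~\eqref{eq:parab:FDef}. I would set $Q = N^\eta$ for a small $\eta \in (1/100, 1/3)$ to be chosen, and $Q' = C(\bflambda) Q$ with $C(\bflambda)$ large enough that $\bfalpha \in \frakm_{Q'}$ forces $\lambda_i \bfalpha \in \frakm_Q$ for every $i \in [s]$; this inclusion is a standard consequence of the fact that a rational approximation of $\lambda_i \bfalpha$ pulls back to one of $\bfalpha$ with denominator scaled by at most $|\lambda_i|$. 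Splitting the torus as $\T^{d+1} = \frakM_{Q'} \sqcup \frakm_{Q'}$, the task is to obtain a power saving in $N$ for each piece.

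For the minor arc contribution, Proposition~\ref{thm:parab:FBounds} gives the uniform bound $|F(\lambda_i \bfalpha)| \lesssim Q^{-1/4} N^d$ on $\frakm_{Q'}$ for every $i$. I would distribute this across all $s$ factors as
\begin{align*}
    \prod_{i=1}^s |F(\lambda_i \bfalpha)| \leq (Q^{-1/4} N^d)^{s - p_d} \prod_{i=1}^s |F(\lambda_i \bfalpha)|^{p_d/s},
\end{align*}
with $p_d = 2 + 4/d$, and integrate using H\"older's inequality (with each remaining factor raised to exponent $s$) together with the $L^{p_d}$ restriction estimate~\eqref{eq:parab:FMomentEpsFull}, deriving
\begin{align*}
    \int_{\frakm_{Q'}} \prod_{i=1}^s |F(\lambda_i \bfalpha)| \, \dbfalpha \lesssim_\eps Q^{-(s-p_d)/4} N^{ds - (d+2) + \eps}.
\end{align*}
For $\eps$ much smaller than $\eta(s-p_d)/4$, this is $O(N^{ds - (d+2) - \nu})$ with $\nu > 0$, since $s > p_d$ by hypothesis.

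On each major arc $\frakM_{Q'}(\bfr, q)$, Proposition~\ref{thm:parab:FBounds} supplies the approximation $F(\lambda_i \bfalpha) = V_i(\bfalpha; \bfr, q) + O(Q^{-1/4} N^d)$, where $V_i = \prod_{j=1}^d q^{-1} S(\lambda_i(a, b_j); q)\, I(\lambda_i(\alpha - a/q, \theta_j - b_j/q); N)$. Summing $\int_{\frakM_{Q'}(\bfr, q)} \prod_i V_i \, \dbfalpha$ over all arcs, changing variables $\bfbeta = \bfalpha - \bfr/q$, and rescaling via~\eqref{eq:parab:IRescaling} (contributing the Jacobian $N^{-(d+2)}$), I would identify the principal contribution $N^{ds - (d+2)} \cdot \frakS(Q') \cdot \frakJ(Q')$. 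The error from this approximation is handled via the telescoping identity $\prod_i F_i - \prod_i V_i = \sum_k F_1 \cdots F_{k-1} (F_k - V_k) V_{k+1} \cdots V_s$; each summand is bounded by H\"older's inequality by $\|F-V\|_s \cdot \|F\|_s^{k-1} \cdot \|V\|_s^{s-k}$, with norms taken over $\T^{d+1}$ or $\frakM_{Q'}$ as appropriate. The $L^s$ norms of $F$ and $V$ are of the expected order $N^{d - (d+2)/s + \eps}$ by~\eqref{eq:parab:FMomentEpsFull} and by a direct major arc computation relying on Proposition~\ref{thm:parab:LocalMomentsCv}, while $\|F - V\|_s^s \lesssim Q^{-(s-p_d)/4} N^{ds - (d+2) + \eps}$ follows by combining the pointwise bound $|F-V| \lesssim Q^{-1/4} N^d$ with $L^{p_d}$ control on both $F$ and $V$.

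Finally, Proposition~\ref{thm:parab:LocalMomentsCv} yields polynomial tail bounds $\frakS - \frakS(Q') = O((Q')^{-c})$ and $\frakJ - \frakJ(Q') = O((Q')^{-c})$ for some $c > 0$, so that every error term becomes smaller than $N^{ds - (d+2) - \nu}$ upon choosing $\eta$ small and positive. The main obstacle is preserving a uniform $Q^{-c}$ saving in the borderline regime $s - 1 \leq p_d$ (as when $d=3$, $s=4$, or $d=5$, $s=3$), where an $L^{s-1}$ estimate on $F$ of the expected order $N^{d(s-1) - (d+2) + \eps}$ is unavailable; this forces me to work consistently with $L^s$ norms of $F$, $V$, and $F - V$ throughout, leveraging that Proposition~\ref{thm:parab:LocalMomentsCv} guarantees absolute convergence of the singular series and integral at exponent $s > p_d$ rather than at $s-1$.
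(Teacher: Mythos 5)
Your proposal is correct and follows essentially the same route as the paper: the circle method with arcs at level a small power of $N$, minor arcs estimated by combining the $L^\infty$ bound of Proposition~\ref{thm:parab:FBounds} with the $L^{p_d}$ and $L^s$ moments~\eqref{eq:parab:FMomentEpsFull} via H\"older (your symmetric distribution over all $s$ factors versus the paper's extraction of a single factor is only cosmetic), major arcs via the approximants $V_i$ with the error expanded multilinearly and controlled through $L^s$ norms interpolated against $L^{p_d}$, and the main term completed to $\frakS\cdot\frakJ$ using the polynomially decaying tails furnished by Proposition~\ref{thm:parab:LocalMomentsCv}; your closing remark about the borderline case $s-1 \leq p_d$ is exactly the point the paper's mixed-norm bound~\eqref{eq:parab:TBound} is designed to address. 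The one small wrinkle is that Proposition~\ref{thm:parab:LocalMomentsCv} controls $\| V_i \|_p$ only for $p$ strictly greater than $2+\tfrac{4}{d}$, so at the exact exponent $p_d$ you should either work with $p_d+\eps$ (as the paper does in its major-arc error estimate) or observe that the singular-series moment truncated at level $Q$ loses only a logarithm; either fix preserves the power saving.
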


\begin{proof}
Throughout the proof, we use the letter $\nu$
to denote a small positive constant whose value
may change from line to line, but which remains bounded away from zero
in terms of $d$ and $s$. The letter $\eps$ denotes a positive constant
which may be taken arbitrarily small, 
and whose value may also change from line to line.
We fix $Q = N^{1/4}$, although the precise value is unimportant.
For a measurable subset $E$ of $\T^{d+1}$,
we define the multilinear operator
\begin{align*}
	T_E(K_1,\dots,K_s) = \int_E K_1 \cdots K_s \ \dm
\end{align*}
acting on functions $K_i : \T^{d+1} \rightarrow \C$.
For $p_d = 2 + \frac{4}{d}$ and any $i \in [s]$, we will use the bound
\begin{align}
\label{eq:parab:TBound}
	|T_E(K_1,\dots,K_s)|
	\leq \bigg[ \| K_i \|_{L^\infty(E)}^{s-p_d} \| K_i \|_{p_d}^{p_d}
	\prod_{j \in [s] \smallsetminus \{i\}} \| K_j \|_s^s \bigg]^{\tfrac{1}{s}}
\end{align}
which follows from Hölder's and Young's inequalities.
We define $F_i = F(\lambda_i \, \cdot \,)$,
so that
\begin{align}
\label{eq:parab:MomentBeginning}
	\calN(N,\bflambda) = T_{\T^{d+1}}(F_1,\dots,F_s).
\end{align}
Note that 
for any $P \geq 1$ and any $\lambda \in \Z \smallsetminus \{0\}$,
$(\alpha,\bftheta) \in \frakM_P$
implies $\lambda (\alpha,\bftheta) \in \frakM_{|\lambda| P}$,
and therefore $\lambda_i (\alpha,\bftheta) \in \frakm_{Q}$
implies $(\alpha,\bftheta) \in \frakm_{Q/|\lambda_i|}$
for any $i \in [s]$.
By Proposition~\ref{thm:parab:FBounds},
we have therefore $|F_i| \lesssim Q^{-1/4} N^d$ 
for all $i \in [s]$ on $\frakm_Q$.
From~\eqref{eq:parab:TBound} and~\eqref{eq:parab:FMomentEpsFull},
it follows that
\begin{align}
	\notag
	|T_{\frakm_Q}(F_1,\dots,F_s)|
	&\lesssim \big[ (N^{d-1/16})^{s - p_d} N^{dp_d -(d+2) + \eps } (N^{ds - (d+2) + \eps})^{s-1} \big]^{1/s} \\
	\notag
	&\lesssim N^{\eps - (1/16)(1 - p_d/s)} N^{ds - (d+2)}
	\\
	\label{eq:parab:MomentMinorArc}
	&\lesssim N^{ds - (d+2) - \nu}.
\end{align}

We now evaluate $T_{\frakM_Q}(F_1,\dots,F_s)$,
by replacing the exponential sums $F_i$ with their
usual major arc approximation.
For $i \in [s]$, we define the function $V_i : \frakU \rightarrow \C$ by
\begin{align}
\label{eq:parab:ViDef}
	V_i (\alpha,\bftheta) = 
	\prod_{j=1}^d q^{-1} S\big( \lambda_i(a,b_j) ; q \big) I( \alpha - a/q, \theta_j - b_j/q ; N)
	\qquad
	\text{for  $(\alpha,\bftheta) \in \frakM_Q(a,\bfb;q)$},
\end{align}
for every $q \geq 1$ and $(a,\bfb) \in [q]^{d+1}$ such that $(a,\bfb,q) = 1$,
and we define $V_i = 0$ on $\frakm_Q$.
Via Proposition~\ref{thm:parab:LocalMomentsCv} 
and~\eqref{eq:parab:IRescaling},
it is a simple matter to check that
\begin{align*}
	\| V_i \|_p^p \lesssim N^{dp - (d+2)}
	\qquad\text{for $p > 2 + \tfrac{4}{d}$}.
\end{align*}

Observe that if $(\alpha,\bftheta) \in \frakM_Q(a,\bfb,q)$
then $\lambda_i(\alpha,\bftheta) \in \frakM_{|\lambda_i|Q}(\lambda_i a , \lambda_i \bfb,q)$
for any $i \in [s]$.
Therefore, by Proposition~\ref{thm:parab:FBounds},
we have $| F_i - V_i| \lesssim N^{d - 1/16}$ on $\frakM_Q$.
Expanding $F_i = V_i + (F_i - V_i)$ by multilinearity,
and using a minor variant of~\eqref{eq:parab:TBound}, it follows that
\begin{align}
	\notag
	&\phantom{= .} |T_{\frakM_Q}(F_1,\dots,F_s) - T_{\frakM_Q}(V_1,\dots,V_s)|
	\\
	\notag
	&\lesssim \max\limits_{i \in [s]}
	\bigg[ \| F_i - V_i \|_\infty^{s-p_d-\eps} \| F_i - V_i \|_{p_d+\eps}^{p_d+\eps}
	\prod_{j \in [s] \smallsetminus \{i\}} \max( \| F_j \|_s^s, \| V_j \|_s^s ) \bigg]^{1/s}
	\\
	\notag
	&\lesssim N^{\eps - (1/16)(1 - p_d/s)} N^{ds - (d+2)}
	\\
	\label{eq:parab:MomentMajorArcApprox}
	&\lesssim N^{ds-(d+2)-\nu}.
\end{align}
for $\eps$ small enough.
Recall~\eqref{eq:parab:ViDef} and~\eqref{eq:parab:IRescaling},
so that by integrating over the fundamental domain $\frakU$
and summing over all the major arcs in~\eqref{eq:parab:AllMajorArcs},
we obtain
\begin{align}
	\notag
	&\phantom{= .} T_{\frakM_Q}(V_1,\dots,V_s)
	\\
	\notag
	&= \sum_{q \leq Q} \sum_{(a,\bfb,q) = 1} \prod_{i=1}^s \prod_{j=1}^d q^{-1} S\big( \lambda_i(a,b_j) ; q \big)  
	\\
	\notag
	&\phantom{= \sum_{q \leq Q}}
	\int\limits_{ [-QN^{-2},QN^{-2}] } \int\limits_{ [-QN^{-1},QN^{-1}]^d } 
	\prod_{i=1}^s \prod_{j=1}^d q^{-1} N I\big( \lambda_i(N^2\beta,N\xi_j) \big) \dbeta \dbfxi 
	\\
	\label{eq:parab:MomentMajorArcAsympt}
	&= \frakS(Q) \cdot \frakJ(Q) \cdot N^{ds - (d+2)},
\end{align}
where we have operated a change of variables 
$\beta \leftarrow N^2 \beta$, $\bfxi \leftarrow N\bfxi$ 
in the last step.
From the discussion following the introduction of
the singular series~\eqref{eq:parab:SgSeries} and~\eqref{eq:parab:SgIntg},
it follows that for $p > 2 + \frac{4}{d}$, 
we have $\frakS, \frakJ < \infty$ and
\begin{align*}
	\frakS(Q) = \frakS + O(N^{-\nu}),
	\quad
	\frakJ(Q) = \frakJ + O(N^{-\nu}).
\end{align*}
Inserting this into~\eqref{eq:parab:MomentMajorArcAsympt},
and recalling~\eqref{eq:parab:MomentBeginning},~\eqref{eq:parab:MomentMinorArc}
and~\eqref{eq:parab:MomentMajorArcApprox}, we obtain finally
\begin{align*}
	\calN(N,\bflambda)
	&= T_{\frakm_Q}(F_1,\dots,F_s) + 
	( T_{\frakM_Q}(F_1,\dots,F_s) - T_{\frakM_Q}(V_1,\dots,V_s) )
	+ T_{\frakM_Q}(V_1,\dots,V_s) 
	\\
	&= \frakS \cdot \frakJ \cdot N^{ds - (d+2)} + O(N^{ds - (d+2) - \nu}).
\end{align*}
\end{proof}

\textit{Proof of Theorem~\ref{thm:intro:SystParabAsympt}.}
Starting from Proposition~\ref{thm:parab:Asympt},
it suffices to carry out a classical
analysis~\cite[Chapter~20]{IK:Book} of the singular series $\frakS$ and the singular integral $\frakJ$,
after which one would find that $\frakS > 0$ and $\frakJ > 0$
under the stated assumptions.
Justifying a remark of the introduction, we mention that
if we had worked with an exponential sum of the form~\eqref{eq:parab:FDef}
defined over $[-N,N]^d \cap \Z^d$ instead, we would have obtained
an asymptotic formula for the number of solutions 
to~\eqref{eq:parab:SystParab} in that larger box, 
and by Remark~\ref{thm:parab:LowerBoundRk}
we could deduce that the corresponding singular factor is positive
whenever a nonzero real solution to~\eqref{eq:parab:SystParab} is known.
\qed

\appendix

\section{A uniform bound on Gauss sums}
\label{sec:gauss}

Here we include the proof of a well-known
estimate that we could not locate precisely
in the literature.

\begin{lemma}
\label{thm:gauss:GaussLemma}
For $q \geq 1$ and $a,b \in \Z_q$,
let $S(a,b;q) = \sum_{u \bmod q} e_q(au^2 + bu)$.
Uniformly in $q,a,b$, we have
\begin{align*}
	|S(a,b;q)| \lesssim 1_{(a,q) | b}\, (a,q)^{1/2} q^{1/2}.
\end{align*}
\end{lemma}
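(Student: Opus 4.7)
The plan is to extract the common factor $h = (a,q)$ and reduce the sum to a classical Gauss sum with coprime data, for which a uniform bound is available. I would write $a = ha'$ and $q = hq'$ with $(a',q') = 1$, and parametrize residues modulo $q$ as $u = v + q' w$ with $v$ running over a complete residue system modulo $q'$ and $w$ over one modulo $h$. A direct expansion yields, modulo $q$,
$$a u^2 + bu \equiv h a' v^2 + b v + b q' w,$$
since the cross term $2 a' v q w$ is divisible by $q$. The sum then decouples as
$$S(a,b;q) = \sum_{v=0}^{q'-1} e_q(h a' v^2 + b v) \cdot \sum_{w=0}^{h-1} e_h(bw),$$
using that $e_q(b q' w) = e_h(bw)$.

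The inner $w$-sum equals $h$ when $h \mid b$ and vanishes otherwise, which immediately produces the indicator factor $1_{(a,q) \mid b}$. In the surviving case $h \mid b$, writing $b = h b'$ reduces the $v$-sum to $\sum_{v=0}^{q'-1} e_{q'}(a' v^2 + b' v)$, a classical quadratic Gauss sum with $(a', q') = 1$. The uniform estimate $|\sum_v e_{q'}(a' v^2 + b' v)| \lesssim q'^{1/2}$ is a standard fact: when $(2a', q') = 1$ one completes the square to reduce to the pure Gauss sum $\sum_v e_{q'}(c v^2)$ of modulus exactly $q'^{1/2}$. Putting the factors together gives $|S(a,b;q)| \lesssim h \cdot q'^{1/2} = h^{1/2} q^{1/2}$, which is the claimed bound.

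The only subtle point will be the primitive Gauss sum estimate when $q'$ is even, since completing the square then requires inverting $2a'$. I would circumvent this by factoring $q' = 2^\alpha m$ with $m$ odd, applying the Chinese Remainder Theorem to split the sum into a factor modulo $m$ (handled by the odd case) and a factor modulo $2^\alpha$ (which can be evaluated directly and is known to satisfy $|\sum_v e_{2^\alpha}(a' v^2 + b' v)| \lesssim 2^{\alpha/2}$). This parity issue is the most technical part of the argument but is entirely standard, and once it is in place the full lemma follows by assembling the estimates as above.
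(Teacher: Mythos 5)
Your proposal is correct, and the main reduction is exactly the one in the paper: extract $h=(a,q)$, split $u = v + q'w$ with $q' = q/h$, observe that the cross terms vanish modulo $q$, and let the $w$-sum produce the factor $h\,1_{h\mid b}$, leaving the coprime sum $\sum_{v \bmod q'} e_{q'}(a'v^2+b'v)$. The only place you diverge is the final estimate of that coprime sum: you complete the square, which forces you to treat the even part of $q'$ separately via the Chinese Remainder Theorem and an explicit $2$-adic evaluation. The paper instead applies the standard squaring-differencing (Weyl) argument: squaring the sum and changing variables reduces it to counting $t \bmod q'$ with $q' \mid 2a't$, of which there are at most two, giving $|\sum_v e_{q'}(a'v^2+b'v)| \leq \sqrt{2}\,q'^{1/2}$ uniformly. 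This sidesteps entirely the parity issue you identify as the most technical point, at the cost of losing the exact evaluation of the Gauss sum, which is irrelevant here since only an upper bound up to constants is needed. Either finish is fine; the differencing route is shorter and self-contained.
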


\begin{proof}
We let $h = (a,q)$, $a' = a/h$, $q' = q/h$.
We have
\begin{align}
	\notag
	S(a,b;q)
	&= \sum_{x \bmod q} e_{q'}(a' x^2) e_q( b x)
	\\
	\label{eq:gauss:SumCgr}
	&= \sum_{u \bmod q'} e_{q'}(a' u^2)
	\sum_{\substack{ x \bmod q \,: \\ x \equiv u \bmod q' }} e_q( b x ).
\end{align}
Writing $x = u + q' y$ with $y \in \Z_h$, we find that
\begin{align*}
	\sum_{\substack{ x \bmod q \,: \\ x \equiv u \bmod q' }} e_q( b x )
	= 	e_q(b u) \sum_{y \bmod h} e_h( b y )
	= e_q(b u) \cdot h 1_{h | b}.
\end{align*}
Inserting this back into~\eqref{eq:gauss:SumCgr},
we find that $S(a,b;q) = 0$ if $h \nmid b$,
and else we write $b = h b'$ and obtain
\begin{align*}
	S(a,b;q)
	&= h \sum_{u \bmod q'} e_{q'}(a' u^2 + b' u).
\end{align*}
Since $(a',q') = 1$ and $q' = q/h$, 
the usual squaring-differencing argument then gives
\begin{align*}
	|S(a,b;q)| \lesssim h (q/h)^{1/2} = (hq)^{1/2}. 
\end{align*}
\end{proof}

\bibliographystyle{amsplain}
\bibliography{addeqs_arxiv5}

\bigskip

\textsc{\footnotesize Department of mathematics,
University of British Columbia,
Room 121, 1984 Mathematics Road,
Vancouver BC V6T 1Z2, Canada
}

\textit{\small Email address: }\texttt{\small khenriot@math.ubc.ca}

\end{document}